\newtheorem{theorem}{Theorem}
\newtheorem{lemma}{Lemma}
\newtheorem{proposition}{Proposition}
\begin{document}

\title{Dynamic Allocation Problems in Loss Network Systems \\with Advanced Reservation}

\author{{\normalsize Retsef Levi$^\ast$, Cong Shi$^\dagger$}\\
{\scriptsize $^\ast$ Sloan School of Management, Massachusetts Institute of Technology, Cambridge, MA 02139, retsef@umich.edu}\\
{\scriptsize $^\dagger$ Industrial and Operations Engineering, University of Michigan, Ann Arbor, MI 48109, shicong@umich.edu}\\
}
\date{}

\maketitle

\begin{abstract}

We consider a class of well-known dynamic resource allocation models in loss network systems with advanced reservation. The most important performance measure in any loss network system is to compute its blocking probability, i.e., the probability of an arriving customer in equilibrium finds a fully utilized system (thereby getting rejected by the system). In this paper, we derive upper bounds on the asymptotic blocking probabilities for such systems in high-volume regimes. There have been relatively few results on loss network systems with advanced reservation due to its inherent complexity. The theoretical results find applications in a wide class of revenue management problems in systems with reusable resources and advanced reservation, e.g., hotel room, car rental and workforce management. We propose a simple control policy called the \emph{improved class selection policy} (ICSP) based on solving a continuous knapsack problem, similar in spirit to the one proposed in \cite{levi}. Using our results derived for loss network systems with advanced reservation, we show the ICSP performs asymptotically near-optimal in high-volume regimes. 

\flushleft{\em Key words:  loss network; advanced reservation; blocking probability; algorithms; revenue management.}
\flushleft{\em History: Submitted May 2015}
\end{abstract}

\section{Introduction}
In this paper, we consider a class of well-known dynamic allocation problems in loss network systems with advanced reservation. The stochastic system consists of a homogeneous pool of resources with a known fixed capacity. Requests for using this pool of resources belong to a diverse set of customer classes, differing in the arrival rate, the advanced reservation duration (the time between a request and its start of service), the service duration, and the willingness to pay. In particular, each customer requests a service interval, specifying a start time (potentially in the future) and an end time. Customers cannot be delayed and their requests must be addressed instantaneously upon arrival. That is, each request must either be reserved into the system for service and assigned an appropriate resource unit for its requested service interval or rejected (lost) at the instant it arrives. An admitted request occupies the allocated resource unit for its service duration and releases the resource unit after the service is completed. The released resource unit can be used to serve other customers. The goal is to design an admission control policy that optimizes the expected long-run average revenue rate. 

Dynamic allocation problems in loss network systems with advanced reservation are motivated by both traditional and emerging application domains, such as \emph{hotel room}, \emph{car rental} and \emph{workforce management}. For instance, in hotel industries, customers make requests to book a room in the future for a specified number of days, which is often referred to as advanced reservation. Rooms are allocated to customers based on their requests, and after one customer used a room it becomes available to serve other customers. One of the major issues in these systems is how to manage capacitated pool of reusable resources over time in a dynamic environment with many uncertainties, in order to maximize the long-run expected revenue.

\subsection{Main results and contributions of this paper}
Firstly, we provide the first asymptotic analysis on the blocking probabilities in loss network systems with discrete advanced reservation and service distributions. The customers arrive to a service system (with capacity $C$) according to $M$-class Poisson processes and are being served as long as there is available reservation capacity. Customers who find a fully utilized system are rejected and lost (see, e.g., \cite{kelly}). Let the traffic intensity $\rho = \sum_{k=1}^{M}\lambda_{k}\mu_{k}$ where $\lambda_{k}$ and $\mu_{k}$ are the Poisson arrival rate and the mean service time of class $k$, respectively. We derive explicit upper bounds on the steady-state \emph{blocking probability}, i.e., the probability that a random customer at steady-state will find a fully utilized system (and is therefore rejected), and analyze them asymptotically in high-volume regimes. 

\begin{theorem}
\label{mainresult}
Consider a loss network system with discrete advanced reservation and service distributions. Let $C$ and $\rho$ be the capacity and the traffic intensity of the system, respectively.
\begin{enumerate}[(a)]
\item Under the high-volume regime where $C=\rho \rightarrow \infty$, the blocking probabilities have an asymptotic upper bound of $1/2$. 
\item Under the high-volume regime where $C=(1+\epsilon) \rho  \rightarrow \infty$ for any small positive $\epsilon$, the blocking probabilities are asymptotically zero.
\end{enumerate}
\end{theorem}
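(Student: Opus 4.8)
The plan is to sandwich the loss system between itself and the associated \emph{infinite-capacity} system that never rejects, and to extract the bound from a Poisson tail. Couple the capacity-$C$ system and the infinite-capacity system on one probability space, feeding both the identical realizations of the $M$ class Poisson streams and of each customer's advanced-reservation lead time $R_k$ and service duration $S_k$. An easy induction over arrival epochs shows that every reservation accepted by the loss system is also held by the infinite-capacity system, so for every time $s$ the number $A(s)$ of accepted loss-system reservations spanning $s$ is dominated by the number $Q^\infty(s)$ of infinite-capacity reservations spanning $s$. By the marking/thinning theorem for Poisson processes, in steady state $Q^\infty(s)$ is Poisson with mean $\sum_k \lambda_k\mu_k=\rho$ for each fixed $s$, regardless of the (discrete) lead-time and service distributions.

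Next I would pin down the blocking event. Since resources are homogeneous and requests are intervals, interval-scheduling feasibility (Hall's condition) makes the accept rule admit an arriving class-$k$ customer exactly when no point of its requested interval $[t+R_k,\,t+R_k+S_k]$ is already spanned by $C$ accepted reservations. By PASTA, the class-$k$ blocking probability $P_k$ therefore equals the steady-state probability that such an interval contains a point $s$ with $A(s)=C$; note also that the accept rule keeps $A(\cdot)\le C$ at all times, so this is the event $\max_{s\in[t+R_k,t+R_k+S_k]}A(s)=C$. Conditioning on $(R_k,S_k)$ (finitely many values) and using discreteness to replace ``some point of the interval'' by a bounded collection of representative epochs, the task reduces to controlling that maximum.

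For part (b), with $C=(1+\epsilon)\rho$, the crude pointwise domination $A(s)\le Q^\infty(s)$ already suffices: a first-moment (union) bound over the $\bigO{\rho}$ reservation-start epochs inside the requested interval, together with the Chernoff estimate $\Prob\{\mathrm{Poisson}(\rho)\ge(1+\epsilon)\rho\}\le e^{-\rho\,((1+\epsilon)\ln(1+\epsilon)-\epsilon)}$, shows that the probability that any point of the interval carries $C$ reservations is at most $\mathrm{poly}(\rho)\cdot e^{-c(\epsilon)\rho}\to 0$, where $c(\epsilon)=(1+\epsilon)\ln(1+\epsilon)-\epsilon>0$. Averaging over the (bounded) values of $(R_k,S_k)$ preserves this, so $P_k\to0$ for every $k$.

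For part (a), with $C=\rho$, the Chernoff exponent vanishes and the union bound of the previous step is useless; here one must use that the loss-system coverage, unlike $Q^\infty$, is capped at $C$ on the \emph{entire} requested interval simultaneously (it is ``reflected'' below the boundary because rejections forbid overshoot). Exploiting this, I would show $P_k\le\Prob\{\mathrm{Poisson}(\rho)\ge C\}+o(1)$ — the point being that, conditionally on the stationary loss-system state, the event that the interval contains a saturated point is governed by a single threshold crossing of a $\mathrm{Poisson}(\rho)$-dominated quantity rather than by many — after which $C=\rho\to\infty$ and the central limit theorem for the Poisson family give $\Prob\{\mathrm{Poisson}(\rho)\ge\rho\}\to\tfrac12$. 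The main obstacle is exactly this last reduction: passing from ``$A(s)\ge C$ at one fixed epoch'' (already of probability $\to\tfrac12$) to ``$A(s)\ge C$ somewhere in the requested interval'' without the bound inflating — a naive union multiplies a probability tending to $\tfrac12$ by a count growing like $\rho$ and yields nothing, so the argument must genuinely use the self-limiting, cap-at-$C$ structure of the loss-system coverage across the whole interval (for instance via a slot-wise Erlang-type relaxation patched over the finitely many representative epochs).
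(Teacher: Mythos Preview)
Your coupling with the infinite-capacity system and the domination $A(s)\le Q^\infty(s)$ match the paper's setup, and your Chernoff-plus-union argument for part~(b) is a valid alternative (in fact simpler than what the paper does: the paper re-runs its full random-walk machinery for~(b) as well). One technical wrinkle: to make the union bound over the random set of $\bigO{\rho}$ service-start epochs rigorous you need a Campbell/Mecke-type first-moment step, since $Q^\infty$ at a start epoch is conditioned on that epoch existing; but this is routine and the exponential Chernoff decay absorbs the polynomial prefactor.

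Part~(a) is where you have a genuine gap. You correctly see that a union bound over $\bigO{\rho}$ epochs, each of probability $\sim 1/2$, is useless; but your proposed fix---exploiting the cap-at-$C$ structure of the \emph{loss} system itself---is not what the paper does, and it is unclear how it could be made to work: the stationary law of the loss system with advanced reservation is not product-form and is essentially intractable, so you cannot directly control $\mathbb{P}(\max_s A(s)=C)$ without routing through $Q^\infty$. The paper instead stays entirely in the \emph{infinite-capacity} system and proves that already $\mathbb{P}(\max_{s\in I} Q^\infty(s)\ge C)\to 1/2$; that is, even with no cap, the maximum over a unit interval exceeds the value at a single endpoint by only $o(\sqrt{\rho})$. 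The missing idea is a reduction to an asymmetric random walk: within each unit interval $(d,d+1]$, the pre-reserved departure process (rate $\lambda_d$) and the pre-reserved arrival process (rate $\lambda_{d+1}<\lambda_d$) merge into a Poisson stream whose points, labeled $-1$ for departures and $+1$ for arrivals, form a random walk with strictly negative drift. The interval maximum of $Q^\infty$ decomposes as (number of departures) $+$ (maximum level of the walk), and the geometric tail of the latter shows the overshoot is $O(\log\rho)=o(\sqrt{\rho})$, which is exactly what lets a single $\mathrm{Poisson}(\rho)$ tail govern the interval maximum. This negative-drift step hinges on the strict inequality $\lambda_{d+1}<\lambda_d$, which comes from the advanced-reservation structure (customers with the minimal lead time have not yet arrived to book into $(d,d+1]$ as seen from time $0$)---an ingredient your sketch never invokes.
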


To the best of our knowledge, there have been very few successful attempts to characterize the blocking probabilities for loss network models with advanced reservation (see, e.g., \cite{coffman} and \cite{lura} that studied several special cases). One of the major difficulties in models with advanced reservation is the fact that a randomly arriving customer effectively observes a nonhomogeneous Poisson process that is induced by the already reserved service intervals. Moreover, analyzing the blocking probability of an arriving customer requires considering the entire requested service interval instead of the instantaneous load of the system. Analyzing the load over an interval immediately introduces correlation that is challenging to analyze. The upper bound on the blocking probability is obtained by considering identical systems with infinite capacity, where all customers are admitted ($M/G/\infty$ systems with advanced reservation). The probability of having more than $C$ customers reserved in the infinite capacity system provides an upper bound on the blocking probability in the original system; we call this the \emph{virtual blocking probability}. Through an innovative reduction to an asymmetric random walk, we obtain an exact analytical expression for this virtual blocking probability and then analyze it asymptotically. The analytical techniques used in developing our results are completely different from the ones used in \cite{levi}.

Secondly, the theoretical results in loss network systems with advanced reservation find interesting applications in revenue management of reusable resources with advanced reservation, e.g., hotel room, car rental and workforce management. A simple control policy called the \emph{improved class selection policy} (ICSP) is proposed based on solving a continuous knapsack problem, similar in spirit to \cite{levi}. Using our results derived for loss network systems with advanced reservation, we show that the ICSP performs asymptotically near-optimal in high-volume regimes.

\begin{theorem}
\label{mmthm}
Consider the revenue management model with a single pool of capacitated reusable resources and advanced reservation under the ICSP.
Under the high-volume regime where $C=(1+\epsilon) \rho  \rightarrow \infty$ for any small positive $\epsilon$, the ICSP is guaranteed to obtain at least $(1-\epsilon)$ of the optimal long-run expected revenue.
\end{theorem}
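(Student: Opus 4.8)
The plan is to bound the optimum from above by the value of a continuous knapsack and to bound the ICSP's revenue from below by comparing, on a common probability space, the finite-capacity system it operates to an infinite-capacity system of the kind covered by Theorem~\ref{mainresult}(b). (Throughout, $\mathrm{OPT}$ denotes the optimal long-run average revenue rate and $w_k$ the revenue from an admitted class-$k$ request.)

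First I would establish the fluid upper bound. For an arbitrary admissible policy let $a_k$ be its long-run rate of class-$k$ admissions; then $a_k\le\lambda_k$, and, because with advanced reservation the number of units occupied at any instant equals the number of admitted customers whose service interval covers that instant -- which never exceeds $C$ -- a time-average / rate-conservation (Little's law) argument gives $\sum_{k=1}^{M}a_k\mu_k\le C$. Setting $p_k:=a_k/\lambda_k\in[0,1]$, the policy's long-run revenue rate is $\sum_k w_k\lambda_k p_k$ with $(p_k)$ feasible for
\begin{equation}\label{eq:knap}
\mathrm{LP}(C)\;:=\;\max\Bigl\{\,\textstyle\sum_{k=1}^{M}w_k\lambda_k p_k \;:\; \sum_{k=1}^{M}\lambda_k\mu_k p_k\le C,\ \ 0\le p_k\le 1\,\Bigr\},
\end{equation}
so $\mathrm{OPT}\le\mathrm{LP}(C)$; this is precisely the knapsack the ICSP solves, with an optimal solution $(p_k^\star)$. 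Making the averaging step fully rigorous for a general ergodic policy (pass to $\limsup_{T}\tfrac1T\ex[\,\cdot\,]$ and use the pathwise bound by $C$) is routine but would have to be written out.

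Next I would run the coupling. Under the ICSP each class-$k$ Poisson arrival is independently designated an admission attempt with probability $p_k^\star$, all other arrivals being rejected immediately, and an attempt is admitted iff a unit is free throughout its requested interval; by thinning, the class-$k$ attempt stream is Poisson of rate $\lambda_k p_k^\star$ and the streams are independent, so the attempts driving the ICSP form exactly a loss network with discrete advanced reservation, capacity $C$, and traffic intensity $\rho^\star:=\sum_k\lambda_k p_k^\star\mu_k$ -- the model of Theorem~\ref{mainresult}. In the regime $C=(1+\epsilon)\rho$ the whole potential load fits with room to spare, $\sum_k\lambda_k\mu_k=\rho<C$, so the capacity constraint in \eqref{eq:knap} is slack: $p_k^\star=1$ for every $k$, hence $\rho^\star=\rho$ and $C=(1+\epsilon)\rho^\star$. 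Couple this attempt stream with the infinite-capacity system that admits everything: a routine pathwise induction shows the ICSP's committed reservations are, at every instant, a subset of the infinite-capacity system's, so that if in the infinite-capacity system no more than $C$ reservations (the tagged one included) are simultaneously active at any point of a tagged request's interval, then a unit is free for that request throughout the interval and the ICSP admits it. Hence every per-class steady-state blocking probability $B_k$ under the ICSP is at most the corresponding virtual blocking probability, and by Theorem~\ref{mainresult}(b) (applied to the $\rho^\star$-system, for which $C=(1+\epsilon)\rho^\star\to\infty$) these are all bounded by a common $\widehat B$ with $\widehat B\to 0$. Therefore
\begin{equation}\label{eq:lb}
\text{ICSP revenue rate}\;=\;\sum_{k=1}^{M}w_k\lambda_k(1-B_k)\;\ge\;(1-\widehat B)\sum_{k=1}^{M}w_k\lambda_k\;=\;(1-\widehat B)\,\mathrm{LP}(C)\;\ge\;(1-\widehat B)\,\mathrm{OPT},
\end{equation}
and since $\widehat B\to 0$ we have $\widehat B\le\epsilon$ for all sufficiently large $C$, so the ICSP earns at least $(1-\epsilon)\,\mathrm{OPT}$, as claimed.

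The real analytic content is Theorem~\ref{mainresult}(b), which is already in hand; the step I expect to demand the most care is the pair of bookkeeping reductions behind \eqref{eq:lb}: (i) turning the pathwise capacity constraint into the time-averaged inequality $\sum_k a_k\mu_k\le C$ valid for every admissible policy, so that $\mathrm{OPT}\le\mathrm{LP}(C)$; and (ii) the sample-path coupling -- with its advanced-reservation subtleties, in particular the off-by-one between ``more than $C$ reserved'' and ``a unit is free'' -- that promotes the single virtual-blocking estimate of Theorem~\ref{mainresult} to the uniform per-class bound $B_k\le\widehat B$, which is exactly what converts ``few attempts are blocked'' into ``little revenue is lost.'' (If the ICSP instead solves the knapsack with a deflated capacity $\widetilde C\le C$, the only addition is to observe that $\mathrm{LP}(\cdot)$ is concave, nondecreasing, and vanishes at $0$, so $\mathrm{LP}(\widetilde C)\ge(\widetilde C/C)\,\mathrm{LP}(C)$ -- and under $C=(1+\epsilon)\rho$ this costs nothing because $\widetilde C\ge\rho$ keeps the constraint slack.)
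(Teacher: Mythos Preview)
Your overall strategy coincides with the paper's: bound $\mathrm{OPT}$ above by a continuous knapsack LP, bound the ICSP's revenue below via the virtual-blocking estimate of Theorem~\ref{mainresult}(b), and let the gap vanish. The paper's proof carries out precisely this, with Lemma~\ref{ub} playing the role of your fluid upper bound and the coupling to the infinite-capacity system already having been developed in Section~\ref{sec_analysis}.

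There is, however, a misreading of what the ICSP is and what $\rho$ denotes. In the paper the ICSP solves the knapsack with right-hand side $(1-\epsilon)C$ (equation~(\ref{mlp})), not $C$; the $\rho$ in Theorem~\ref{mmthm} is the traffic intensity of the loss network \emph{induced by the ICSP's selection}, and the relation $C\approx(1+\epsilon)\rho$ is a consequence of that construction rather than an exogenous assumption that capacity exceeds total demand. Under this reading the knapsack constraint is typically tight, so your claim that $p_k^\star=1$ for every $k$ is unwarranted, and the ICSP genuinely selects among classes. Your parenthetical attempts to handle the deflated-capacity case, and the concavity inequality $\mathrm{LP}(\widetilde C)\ge(\widetilde C/C)\,\mathrm{LP}(C)$ is correct and is exactly Lemma~\ref{ub}; but the follow-up assertion that ``$\widetilde C\ge\rho$ keeps the constraint slack'' is false: with $\widetilde C=(1-\epsilon)C$ and your own reading $C=(1+\epsilon)\rho$ one gets $\widetilde C=(1-\epsilon^2)\rho<\rho$. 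If you drop that erroneous sentence and retain only the concavity step, your argument becomes identical to the paper's: $\mathcal{R}(\mathrm{ICSP})\ge(1-\widehat B)\,\mathrm{LP}((1-\epsilon)C)\ge(1-\widehat B)(1-\epsilon)\,\mathrm{OPT}$, with $\widehat B\to0$ by Theorem~\ref{mainresult}(b).
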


We also carried out an extensive numerical study on the ICSP in comparison with the optimal stochastic dynamic programming solutions. Our results show that our policy performs within a few percentages of the optimal for a large set of parameters (even in light-traffic). There have been relatively few results on loss network systems with advanced reservation, and we believe that the approaches developed in this paper will be applicable in other applications domains in operations management.

\subsection{Relevant Literature}
Loss network systems without advanced reservation are well-known; they were introduced over four decades ago and have been studied extensively, primarily in the context of communication networks (e.g., the survey paper by \cite{kelly}) and recently other application domains. Two of the major issues in the literature on loss networks have been the \emph{study and design of heuristics} for admission control (e.g., \cite{miller}, \cite{ross}, \cite{key}, \cite{kelly}, \cite{hunt}, \cite{puhal}, \cite{fan}), and the development of \emph{approximations and bounds} as well as \emph{sensitivity analysis} of blocking probabilities with respect to input parameters and resource capacities (e.g., \cite{erlang}, \cite{sevastyanov}, \cite{kaufman}, \cite{burman}, \cite{whitt}, \cite{kelly}, \cite{ross2}, \cite{zach}, \cite{louth}, \cite{kumar} and \cite{adelman2}). However, there have been relatively few successful attempts to characterize the blocking probabilities for the loss network systems with advanced reservation. In particular, all the results mentioned above do not carry through. \cite{coffman} derived explicit formulas for the limiting blocking probabilities in several special cases, for instance, in a setting where the reservation distribution is uniform and all requested intervals have unit length. They extended the result to more general reservation distributions by relating the problem to an on-line interval packing problem. \cite{lura} studied the asymptotic blocking probabilities when the capacity of the system approaches infinity with sub-exponential resource requirements. Some papers are devoted to study the transient behavior or approximations of blocking probabilities for the $M_{t}/G/\infty$ queue as well as $M_{t}/G/C/C$ loss systems (without advanced reservation) where the arrival process is nonhomogeneous Poisson (e.g., \cite{eick,eick2}), \cite{massey} for the details on some of the results along these lines). The deterministic counterpart systems with advanced reservation have been considered in the scheduling and parallel computing literature, which is not the main focus of this paper.

The theoretical results in the loss network systems with advanced reservation find interesting applications in a class of revenue management problems. The most relevant prior work in these applications is \cite{levi} which used a simple knapsack-type linear program (LP) to devise a conceptually simple admission control policy called class selection policy (CSP)  for the models without advanced reservation (i.e., all customers wish to start service upon arrival). The optimal solution obtained by solving the LP guides the policy to select the more profitable classes of customers. The LP provides an upper bound on the optimal expected long-run average revenue and can be used to analyze the performance of CSP. The analysis is based on the fact that the CSP induces a stochastic process that can be reduced to a classical loss network model without advanced reservation. They developed explicit expressions for the resulting blocking probabilities induced by the CSP, and then showed that the CSP is guaranteed to achieve at least half of the optimal long-run revenue. Also, the CSP was shown to be asymptotically optimal when the capacity goes to infinity. The knapsack-type LP considered by \cite{levi} has been previously discussed by several other researchers (see, e.g., \cite{key}, \cite{hunt}). In fact, a variant of the CSP has been discussed by \cite{key} and \cite{kelly}, who analyzed the randomized thinning policy. Moreover, \cite{key} has shown that the variant of the CSP for the single resource case without advanced reservation is asymptotically optimal in the critically loaded regime. \cite{iye} have also used an identical LP to devise exponential penalty function control policies to approximately maximize the expected reward rate in a loss network. All of these works have considered models without advanced reservation.

\subsection{Structure of this paper}
The remainder of the paper is organized as follows. In Section  \ref{sec_model}, we describe the mathematical model. In Section \ref{sec_analysis}, we present an asymptotic analysis on the blocking probabilities in loss network systems with advanced reservation. In Section \ref{sec_rm}, we focus on applications in revenue management of reusable resources with advanced reservation and draw connections between these applications and our theoretical findings in loss network systems. We propose an improved class selection policy and show that it performs asymptotically near-optimal. We extend our model to a pricing model in Section \ref{sec_pricing}. In Section \ref{sec_NE}, we present the dynamic programming formulation and then demonstrate the empirical effectiveness of our policy. Finally we conclude the paper with some future research directions in Section \ref{sec_con}. The proofs of technical lemmas and propositions are provided in the Appendix.

\section{Loss Network Systems with Advanced Reservation} \label{sec_model}

We first describe a well-structured stochastic process called loss network systems with advanced reservation, i.e., \emph{M/G/C/C} loss systems with advanced reservation. We consider a homogeneous pool of resources of integer capacity $C <\infty$ being used to satisfy the demands of $M$ different classes of customers. The customers of each class $k=1,\ldots,M$, arrive according to an independent Poisson process with a class-dependent rate $\lambda_{k}$. Each class-$k$ customer requests to reserve one unit of the capacity for a specified \emph{service time interval} in the future.

Let $D_{k}$ be the reservation distribution of a class-$k$ customer, and $S_{k}$ be the respective service distribution with mean $\mu_{k}$ (see Figure \ref{model}). In particular, upon an arrival of a class-$k$ customer at some random time $t$, the customer requests to reserve the service time interval $[t + d, t + d + s]$, where $d$ and $s$ are drawn according to $D_{k}$ and $S_{k}$, respectively. Thus, $t+d$ is called the \emph{starting service time}. Note that $D_{k}$ and $S_{k}$ are independent of the arrival process and between customers; however, per customer, $D_{k}$ and $S_{k}$ can be correlated. We assume that $D_{k}$ is discrete with finite support $[0, u_{k}]$ and $S_{k}$ is discrete with finite support $[1,v_{k}]$. 

\begin{figure}[ht]
\centering
\includegraphics[scale=0.7]{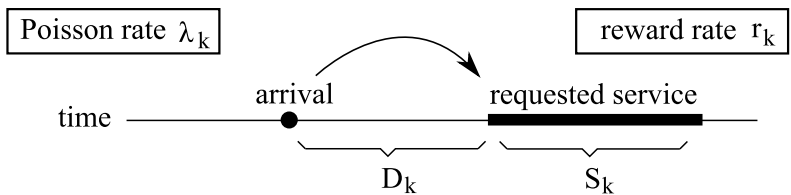}
\caption{Reservation distributions and service distributions}
\label{model}
\end{figure}

The resource unit can be reserved for an arriving customer only if upon arrival there is at least one unit of capacity that is available (i.e., not reserved before time $t$) throughout the entire requested interval $[t + d, t + d + s]$, i.e., this request can only be satisfied if the maximum number of already reserved resource units over $[t + d, t + d + s]$ is strictly smaller than the capacity $C$. Customers whose requests are not reserved upon arrival due to insufficient capacity are \emph{lost} and leave the system. 

Upon reservation, the requested unit will then be fully committed and cannot be canceled for the reserved service interval. In addition, there is a class-specific reward rate of $r_{k}$ collected per unit of service time. The key research question in loss network systems with advanced reservation is to characterize or upper bound the \emph{blocking probabilities}, i.e., the long-run probabilities that a random customer sees a fully utilized system and is therefore blocked for service.

\section{An Asymptotic Analysis of Blocking Probabilities} \label{sec_analysis}

Before delving into details, we first give an overview of our analysis. Analyzing the original capacitated system seems rather difficult. Instead, we consider the counterpart system with infinite capacity (i.e., a \emph{M/G/$\infty$} system with advanced reservation) while keeping all other problem parameters fixed. In this counterpart system, all customers are admitted since there is an infinite number of resources. It is not hard to see that, for each sample path and each time $t$, the admitted customers reserved to get service in the original capacitated system are a subset of those reserved in the infinite capacity counterpart system. Consider now a customer arriving at some random time $t$ in the counterpart system with infinite capacity requesting service interval $[t + d, t + d + s]$. Define the \emph{virtual blocking probability} to be the probability that the maximum reserved capacity over the requested service interval $[t + d, t + d + s]$ just prior to time $t$ is larger than $C$. Since the set of served customers in the infinite capacity system is always a superset of that served in the original capacitated system, it follows that the virtual blocking probability is in fact an upper bound on the blocking probability in the original capacitated system. It makes sense to analyze the asymptotic behavior of the virtual blocking probabilities, which, in turn, provides us asymptotic upper bounds on the blocking probabilities in the original capacitated system. Let us start with the simplest non-trivial case (which gives us some insights into how to analyze such complex models), and gradually develop our main result for the general case.

\subsection{The Simplest Non-Trivial Case: Two-Point Distribution}
We will start the asymptotic analysis with the simplest non-trivial case, and then extend it gradually to the more general case. Suppose that $S$ takes only one value $s=1$ deterministically. Then the traffic intensity $\rho = \lambda \mu = \lambda$. In addition, assume that $D$ follows a two-point distribution,
\begin{equation*}
D =
\begin{cases}
0 & \text{w.p. } \gamma,\\
1 & \text{w.p. } 1-\gamma,
\end{cases}
\end{equation*}
i.e., $f_{D}(0) = \gamma$ and $f_{D}(1) = 1-\gamma$. That is, an arriving customer either wants to start the service immediately or in $1$ unit of time. Consider the counterpart system with an infinite number of servers in steady-state (note that the steady-state exists due to the induced semi-Markov process). Upon a customer arrival to the system at some time $t$, all the starting service times of the customers who had arrived prior to $t$ are already known. For ease of exposition, we call these starting service times \emph{pre-arrivals}. Similarly, we call all the starting service times of the customers, who will arrive after $t$ \emph{post-arrivals}. Note that the pre-arrivals and post-arrivals are always defined with respect to the current time $t$. It is important to observe that the virtual blocking probability at time $t$ (as well as the blocking probability in the original capacitated system) is independent of \emph{post-arrivals}. Without loss of generality, we assume that $t=0$ and the system reaches equilibrium. 

Lemma \ref{firstl} below characterizes the pre-arrival processes (i.e., the booking profile) observed by a customer arriving at time $0$ in steady-state. Let $\left\lceil r \right\rceil$ is the smallest integer not less than $r$. 
\begin{lemma}
\label{firstl}
Consider the counterpart system with an infinite number of servers, then a customer arriving at the system at time $0$ in steady-state, observes that the pre-arrivals follow a non-homogeneous Poisson process with piecewise rate $\eta(r)$ at time $r$
\begin{equation*}
\label{sst}
\eta(r) =
\begin{cases}
\lambda, & \text{if} \qquad r \le 0,\\
(1-\gamma)\lambda, & \text{if} \qquad 0 < r \le 1,\\
0, & \text{if} \qquad r > 1.
\end{cases}
\end{equation*}
\end{lemma}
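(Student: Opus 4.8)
The plan is to construct the pre-arrival point process explicitly out of the Poisson arrival stream and then recognize it as a superposition of two thinned-and-shifted Poisson processes, from which the piecewise rate $\eta$ drops out immediately.

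First I would fix the viewpoint that, in steady state, the tagged customer arriving at time $0$ sees the past exactly as it is. By PASTA --- equivalently, by Slivnyak's theorem applied to the Poisson arrival process --- conditioning on an arrival epoch at $t=0$ does not alter the law of the arrival epochs strictly before $0$, which remain a homogeneous Poisson process of rate $\lambda$ on $(-\infty,0)$. To each such arrival epoch $\tau_i<0$ is attached an independent reservation mark $D_i\sim D$, independent of the arrival process, and the corresponding pre-arrival (starting service time) is the point $\tau_i+D_i$. Thus the pre-arrival process is the image of the marked Poisson process $\{(\tau_i,D_i):\tau_i<0\}$ under the displacement map $(\tau,d)\mapsto \tau+d$.

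Next I would decompose by the value of the mark. By the colouring (thinning) theorem, the arrivals with $D_i=0$ form a Poisson process of rate $\lambda\gamma$ on $(-\infty,0)$ and, independently, the arrivals with $D_i=1$ form a Poisson process of rate $\lambda(1-\gamma)$ on $(-\infty,0)$. Applying the displacement map to each stream separately: the $D=0$ points stay put, giving a Poisson process of rate $\lambda\gamma$ on $(-\infty,0)$; the $D=1$ points are all shifted right by exactly $1$, giving a Poisson process of rate $\lambda(1-\gamma)$ on $(-\infty,1)$. Since these two image processes are independent, the superposition theorem says their union is a non-homogeneous Poisson process whose intensity at a point $r$ is the sum of the two intensities there: $\lambda\gamma+\lambda(1-\gamma)=\lambda$ for $r\le 0$; only $\lambda(1-\gamma)$ for $0<r\le 1$ (an arrival at $\tau=r>0$ with $D=0$ is a post-arrival and is excluded, so only the shifted $D=1$ stream reaches into $(0,1]$); and $0$ for $r>1$ (by the bounded support $D\le 1$, a pre-arrival with starting time $r>1$ would need $\tau\ge r-1>0$, again a post-arrival). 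This is exactly $\eta(r)$.

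The step I expect to require the most care is the very first one: making rigorous the claim that the tagged customer ``observes the past in steady state'' and that tagging it does not bias the pre-arrival configuration. This is precisely the Palm/Slivnyak property of the Poisson arrival process (its Palm distribution is the original law with an added atom at $0$), together with the existence of the stationary regime for the induced semi-Markov process noted above, which is what licenses treating the arrival stream as a Poisson process on the whole half-line $(-\infty,0)$ rather than on a truncated window. Once that is in place, the colouring, displacement, and superposition steps are entirely standard Poisson-process manipulations, and the computation of $\eta$ is a short bookkeeping exercise driven by the two atoms $\{0,1\}$ of $D$.
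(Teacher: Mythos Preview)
Your proposal is correct and takes essentially the same approach as the paper: both arguments rest on Poisson thinning (splitting by the value of $D$), shifting, and superposition. The paper carries this out interval-by-interval---for each unit interval $(\lceil r\rceil-1,\lceil r\rceil]$ it identifies the two contributing streams (rate $\gamma\lambda$ from customers arriving in that interval with $D=0$, rate $(1-\gamma)\lambda$ from customers arriving one unit earlier with $D=1$) and merges them---whereas you do the same computation globally via the displacement map on the marked process; your explicit invocation of Slivnyak/PASTA to justify that the tagged arrival does not bias the past is a nice addition that the paper leaves implicit.
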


The proof of Lemma \ref{firstl} is simple by using Poisson splitting arguments. In order to figure out how likely this customer (arriving in equilibrium) gets blocked, it is important to know the entire booking profile (consisting of committed services not yet started) at the moment of his or her arrival. Lemma \ref{firstl} gives a compact description of this pre-arrival process as seen from $t=0$.

Let $N_{d}(r)$ where $r \in [0,1]$ be the Poisson counting process of the number of pre-arrivals over the interval $[d-1, d]$ \emph{as seen from time 0}. The corresponding rates of this Poisson counting process are given by Lemma \ref{firstl}. Next, we introduce the notion of \emph{mirror image} of a Poisson counting process. The mirror image of a Poisson counting process $N_{d}(r)$, denoted by $\tilde{N}_{d}(r)$, is a backward counting process of $N_{d}(r)$. More formally, $\tilde{N}_{d}(r) = N_{d}(1) - N_{d}(1-r)$ for each $r \in [0,1]$. It is evident that $\tilde{N}_{d}$ is also a Poisson process with the same rate as $N_{d}$. We will use  $\tilde{N}_{d}(r)$ to model the departure process over the interval $[d, d+1]$ in reverse time.

Now let $B$ be the event that a customer arriving at time $0$ in steady-state is virtually blocked. The conditional long-run virtual blocking probability $P_{d} \triangleq \mathbb{P}(B \mid D=d)$, for each $d=0,1$. Lemma \ref{sl1} below characterizes $P_{0}$ and $P_{1}$ based on the counting processes introduced above. Figure \ref{adv3} gives a schematic representation of the two processes as observed a random customer arriving at $t=0$. For clarity, we also use $N_{d}( \cdot ;\lambda)$ to denote a Poisson counting process with a given rate $\lambda$. 

\begin{figure}[ht]
\centering
\includegraphics[scale=0.5]{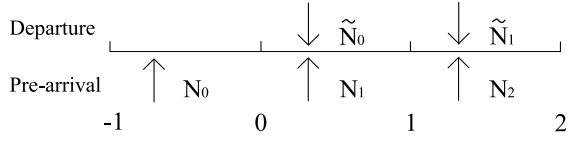}
\caption{One-class departure and pre-arrival processes}
\label{adv3}
\end{figure}

\begin{lemma}
\label{sl1}
Consider the counterpart system with an infinite number of servers, if a customer arrives at time $0$ in steady-state and requests service $S=1$ deterministically to commence in $D$ units of time ($D=0$ or $1$ with probabilities $\gamma$ and $1-\gamma$, respectively), the conditional virtual blocking probabilities are given by
\begin{eqnarray*}
P_{0} &\triangleq& \mathbb{P}(B \mid D=0) = \mathbb{P} \left(\max_{r \in [0,1]}
\left\{ \tilde{N}_{0}(1-r; \lambda) + N_{1}(r;(1-\gamma)\lambda) \right\} \ge C \right),\\
P_{1} &\triangleq& \mathbb{P}(B \mid D=1) = \mathbb{P} \left( \tilde{N}_{1}(1;(1-\gamma)\lambda)   \ge C \right),
\end{eqnarray*}
where $N_{d}$ ($d=0,1,2$) is a Poisson counting process with rate  $\lambda_{d}$ and $\tilde{N}_{d}$ ($d=0,1$) is the mirror image of $N_{d}$ with the same rate $\lambda_{d}$. 
Moreover, $\lambda_{0}= \lambda$, $\lambda_{1}=(1-\gamma) \lambda$ and $\lambda_{2} = 0$. 
\end{lemma}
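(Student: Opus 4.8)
The plan is to translate the conditional virtual blocking event directly into a statement about the pre-arrival point process of Lemma~\ref{firstl}, and then read off the two formulas by a change of variables (the ``mirror image'') together with an elementary monotonicity argument.

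First I would set up the relevant \emph{coverage function}. Condition on $D=d$, so the tagged customer requests the interval $[d,d+1]$ (recall $S=1$). In the infinite-server counterpart a previously arrived (pre-arrival) customer with starting service time $\tau$ occupies exactly $[\tau,\tau+1]$, hence is reserved at a point $r$ iff $\tau\in[r-1,r]$. Therefore the number of units reserved at time $r$ just prior to the arrival at time $0$ equals $Q(r):=\#\{\text{pre-arrival starting times in }[r-1,r]\}$, and by the model's admission rule the conditional virtual blocking event is precisely $\{\max_{r\in[d,d+1]}Q(r)\ge C\}$. By Lemma~\ref{firstl} the pre-arrival starting times form a non-homogeneous Poisson process with rate $\lambda$ on $(-\infty,0]$, rate $(1-\gamma)\lambda$ on $(0,1]$, and rate $0$ on $(1,\infty)$, so it remains to evaluate $\max_r Q(r)$ for $d=0$ and $d=1$.

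For $d=0$ the window $[r-1,r]$ with $r\in[0,1]$ straddles the arrival epoch $0$, where the intensity changes. Splitting at $0$, $Q(r)$ is the sum of the number of starts in $[r-1,0]$ and the number in $(0,r]$; these two counts come from disjoint time intervals, hence are independent Poisson variables with means $\lambda(1-r)$ and $(1-\gamma)\lambda\,r$. I would identify the second count with $N_{1}(r;(1-\gamma)\lambda)$, a forward Poisson count of rate $(1-\gamma)\lambda$ on $[0,1]$, and the first count with $\tilde N_{0}(1-r;\lambda)=N_{0}(1;\lambda)-N_{0}(r;\lambda)$, the mirror image of a rate-$\lambda$ Poisson count, which exactly reindexes ``starts in $[r-1,0]$'' as the unit window shrinks from length $1$ (at $r=0$) to length $0$ (at $r=1$). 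Taking the maximum over $r\in[0,1]$ yields the stated expression for $P_{0}$, with $\tilde N_{0}$ and $N_{1}$ independent. For $d=1$ the window $[r-1,r]$ with $r\in[1,2]$ lies in $[0,2]$, but Lemma~\ref{firstl} gives no pre-arrival mass on $(1,\infty)$, so $Q(r)$ equals the number of starts in $[r-1,1]$, which is nonincreasing in $r$; hence $\max_{r\in[1,2]}Q(r)=Q(1)$, the number of starts in $[0,1]$, a Poisson variable of mean $(1-\gamma)\lambda$. Writing this as $\tilde N_{1}(1;(1-\gamma)\lambda)=N_{1}(1;(1-\gamma)\lambda)$ gives $P_{1}$, and the identifications $\lambda_{0}=\lambda$, $\lambda_{1}=(1-\gamma)\lambda$, $\lambda_{2}=0$ are just the bookkeeping of which segment of the intensity in Lemma~\ref{firstl} feeds each counting process.

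The routine parts are the Poisson splitting/independence and the monotonicity in the $d=1$ case. The one place to be careful is the reduction of the continuous-time maximum: one should check that $Q(\cdot)$ is a right-continuous step function, so the supremum over the continuum of $r$ is attained and the event is measurable, and that the boundary epochs (a start occurring exactly at $0$ or exactly at $1$) have probability zero and therefore do not affect the splitting at $0$ or the truncation at $1$. None of this is deep, but it is where the argument must be written with some care.
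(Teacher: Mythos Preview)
Your proposal is correct and follows essentially the same approach as the paper: both invoke Lemma~\ref{firstl} for the piecewise pre-arrival rates, express the occupancy at time $r\in[d,d+1]$ as the sum of ``starts in $(r-1,d]$'' (captured via the mirror image $\tilde N_d$) and ``starts in $(d,r]$'' (the forward process $N_{d+1}$), and then take the maximum. The only cosmetic difference is in the $d=1$ case, where you argue directly by monotonicity of $Q(r)$, whereas the paper writes the full expression $\max_r\{\tilde N_1(1-r)+N_2(r;0)\}$ and simplifies using $\lambda_2=0$; these are equivalent.
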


Lemma \ref{sl1} essentially tells us that the virtual blocking probability can be expressed in terms of the maximum of the sum of these two Poisson counting processes running towards each other (see Figure \ref{adv2}), one of which representing the pre-arrival process (the committed services not yet started) and the other one representing the departure process. The random process (inside the max operator) is hard to analyze, which is very different than merging two Poisson counting processes running in the same direction.

\begin{figure}[ht]
\centering
\includegraphics[scale=0.63]{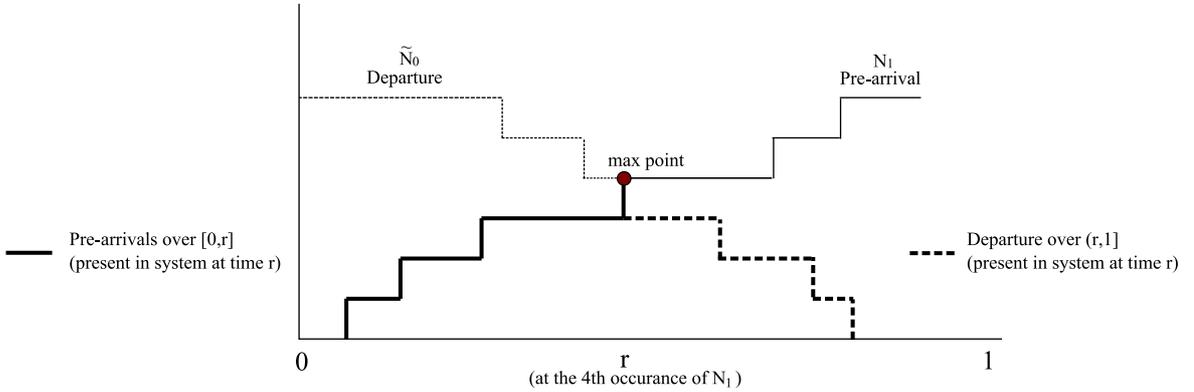}
\caption{Two Poisson counting processes running towards each other}
\label{adv2}
\end{figure}

To analyze these blocking probabilities, we first assume that the probability that an arriving customer seeks to start service immediately is positive (i.e., $\gamma > 0$), and then drop this assumption later. Now assuming that $\gamma >0$, we shall show that under the conventional heavy-traffic regime where both the arrival rate and the capacity scale together to infinity, i.e., $C  = \lambda \rightarrow \infty$, the conditional virtual blocking probabilities $P_{0}$ and $P_{1}$ have the following asymptotic limits, namely,
$\lim_{\lambda \rightarrow \infty} P_{0} = 1/2$ and $\lim_{\lambda \rightarrow \infty} P_{1} = 0$. In fact, we shall prove a more general statement that will be useful in our analysis of the general case. 

\begin{proposition}
\label{primary}
Let $N_{0}$, $N_{1}$ and $N_{2}$ be Poisson counting processes (mutually independent) with rates $\lambda$, $\theta_{1} \lambda$ and $\theta_{2} \lambda$, respectively, where $1>\theta_{1}\ge \theta_{2} \ge 0$ are fixed constants. Let $\tilde{N}_{0}$ and 
$\tilde{N}_{1}$ be the mirror images of $N_{0}$ and $N_{1}$, respectively. Define two random variables $X$ and $Y$ as follows,
\begin{eqnarray}
\label{def-x-y}
X \triangleq  \max_{r \in [0,1]} \left\{ \tilde{N}_{0}(1-r; \lambda) + N_{1}(r;\theta_{1} \lambda) \right\}, \qquad
Y \triangleq  \max_{r \in [0,1]} \left\{ \tilde{N}_{1}(1-r; \theta_{1} \lambda) + N_{2}(r;\theta_{2} \lambda) \right\}. 
\end{eqnarray}
Then, in the high-volume regime where $C  = \lambda \rightarrow \infty$,
$$
\lim_{\lambda \rightarrow \infty} \mathbb{P}(X \ge C) = \frac{1}{2}, \qquad \lim_{\lambda \rightarrow \infty} \mathbb{P}(Y \ge C) = 0.
$$
\end{proposition}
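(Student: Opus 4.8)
The plan is to first use the definition of the mirror image to rewrite $X$ and $Y$ as a running maximum of a difference of two independent Poisson processes, shifted by a ``level'' term, and then to separate the analysis of the level term (which produces the limits $1/2$ and $0$) from that of the ``overshoot'' (which must be shown to be negligible). Since $\tilde N_0(1-r)=N_0(1)-N_0(r)$ and $\tilde N_1(1-r)=N_1(1)-N_1(r)$, one obtains the identities
\[
X \;=\; N_0(1;\lambda)+Z, \qquad Y \;=\; N_1(1;\theta_1\lambda)+Z',
\]
where the nonnegative overshoots are
\[
Z := \max_{r\in[0,1]}\bigl\{N_1(r;\theta_1\lambda)-N_0(r;\lambda)\bigr\}, \qquad Z' := \max_{r\in[0,1]}\bigl\{N_2(r;\theta_2\lambda)-N_1(r;\theta_1\lambda)\bigr\}.
\]
The level term $N_0(1;\lambda)\sim\mathrm{Poisson}(\lambda)$ is centered exactly at the threshold $C=\lambda$, which is what will give $\mathbb{P}(X\ge C)\to 1/2$; the level term $N_1(1;\theta_1\lambda)\sim\mathrm{Poisson}(\theta_1\lambda)$ is subcritical because $\theta_1<1$, which is what will give $\mathbb{P}(Y\ge C)\to 0$.

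The main step is to control the overshoots. The process $r\mapsto N_1(r;\theta_1\lambda)-N_0(r;\lambda)$ changes only at the jump times of $N_0$ and $N_1$, which are almost surely distinct and whose superposition is a Poisson process of rate $(1+\theta_1)\lambda$; its embedded jump chain is a random walk $S_0=0,S_1,S_2,\dots$ with i.i.d.\ increments equal to $+1$ with probability $\theta_1/(1+\theta_1)$ and to $-1$ with probability $1/(1+\theta_1)$. Since $\theta_1<1$ the walk has strictly negative drift, hence is transient to $-\infty$, and by the gambler's-ruin computation its all-time supremum $Z^\ast:=\sup_{k\ge 0}S_k$ is almost surely finite with the $\lambda$-independent geometric tail $\mathbb{P}(Z^\ast\ge m)=\theta_1^{\,m}$. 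Because $Z=\max\{S_0,S_1,\dots,S_K\}$, where $K\sim\mathrm{Poisson}((1+\theta_1)\lambda)$ counts the superposed jumps in $[0,1]$, we have $Z\le Z^\ast$ in the natural coupling; in particular the family $\{Z\}_{\lambda}$ is tight, so $Z/\sqrt\lambda\to 0$ in probability. For $Z'$ the drift of the corresponding walk is $(\theta_2-\theta_1)/(\theta_1+\theta_2)\le 0$ (using $\theta_2\le\theta_1$), so comparing it with its centered martingale version and applying Doob's $L^2$ maximal inequality gives $\mathbb{E}[Z']\le 2\sqrt{(\theta_1+\theta_2)\lambda}$, uniformly in $\lambda$; hence $Z'/\lambda\to 0$ in probability, which is all that is needed on the subcritical side.

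The conclusions then follow from standard one-dimensional asymptotics. For $X$: since $Z\ge 0$ we get the lower bound $\mathbb{P}(X\ge\lambda)\ge\mathbb{P}(N_0(1;\lambda)\ge\lambda)\to 1/2$ from the Poisson central limit theorem; for the matching upper bound, fix $\varepsilon>0$, pick $m$ with $\mathbb{P}(Z^\ast>m)<\varepsilon$, and estimate $\mathbb{P}(X\ge\lambda)=\mathbb{P}(N_0(1;\lambda)+Z\ge\lambda)\le\varepsilon+\mathbb{P}(N_0(1;\lambda)\ge\lambda-m)$, whose right side tends to $\varepsilon+1/2$; letting $\varepsilon\downarrow 0$ finishes it. For $Y$: dividing by $\lambda$, the law of large numbers gives $N_1(1;\theta_1\lambda)/\lambda\to\theta_1$ while $Z'/\lambda\to 0$ in probability, so $Y/\lambda\to\theta_1<1$ and $\mathbb{P}(Y\ge\lambda)\to 0$.

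I expect the control of the overshoot in the second paragraph to be the only genuinely non-routine step: although the number of Poisson jumps on $[0,1]$ diverges with $\lambda$, the running maximum $Z$ does not grow, because the embedded jump chain is a transient random walk whose global supremum has a $\lambda$-free geometric law — and it is precisely the strict inequality $\theta_1<1$ that makes this work and pins the limit for $X$ at $1/2$ rather than something larger. Recognizing the right embedded walk and invoking the gambler's-ruin tail is the crux; the Poisson central limit theorem and law of large numbers at the end, and the bookkeeping with the mirror-image identity at the start, are routine.
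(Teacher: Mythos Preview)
Your proof is correct and follows essentially the same route as the paper. Your identity $X=N_0(1)+Z$ is precisely the paper's decomposition $X_n=G_n+M_n$ (their Lemma~\ref{decomposition}) written without conditioning on the total number of jumps, and your geometric tail $\mathbb{P}(Z^\ast\ge m)=\theta_1^{\,m}$ is exactly their Lemma~\ref{hitting}; the paper then chooses the specific threshold $m=-\log\lambda/\log\theta_1$ to get a $1/\lambda$ error, whereas you use a tightness/$\varepsilon$ argument, but this is cosmetic. The one genuine variation is in the treatment of $Y$: to cope with the possibility $\theta_1=\theta_2$ (zero drift), the paper first stochastically dominates $Y$ by replacing $\theta_1$ with some $\bar\theta_1\in(\theta_1,1)$ so the embedded walk again has strictly negative drift, and then reruns the $X$-argument; you instead bound $Z'$ by the supremum of the centered martingale and apply Doob's $L^2$ inequality, which handles the zero-drift case directly and gives $Z'=O(\sqrt{\lambda})$. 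Both approaches are short; yours avoids introducing an auxiliary parameter, while the paper's avoids invoking a martingale inequality.
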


Note that $P_{0}$ and $P_{1}$ (in Lemma \ref{sl1}) can be obtained by simply setting $\theta_{1} = (1-\gamma)$ and $\theta_{2} = 0$ in $X$ and $Y$ above. We observe that asymptotically only customers with zero reservation time are likely to be blocked. This stems from the fact that for a given time slot and unit capacity, these customers are the last to arrive. Since we have a system where the accepted demand is close to the capacity, these are the customers that will most likely be blocked.

To prove Proposition \ref{primary}, we provide an alternative characterization of $X$ and $Y$ above based on a downward-drifting asymmetric random walk process that takes a down-step, for each departure, and an up-step, for each pre-arrival. We would like to show that asymptotically the maximum level of the random walk stays relatively close to its starting position by showing that the rate of the random walk going up is sublinear in $\sqrt{\lambda}$. 

Consider the merged process induced on $[0,1]$ by the two Poisson counting processes $\tilde{N}_{0}$ and $N_{1}$. Let  $\mathcal{N} = \tilde{N}_{0}(1;\lambda) + N_{1}(1;\theta \lambda)$ denote the total number of occurrences (pre-arrivals and departures) over $[0,1]$ of the two independent Poisson counting processes of $\tilde{N}_{0}$ and $N_{1}$. Note that since $\tilde{N}_{0}$ and $N_{1}$ are independent of each other, $\mathcal{N}$ is a Poisson random variable with rate $(1+\theta)\lambda$. Conditioning on $\mathcal{N}=n$, the induced merged process has $n$ points uniformly distributed over the interval $[0,1]$. By the splitting argument applied to the merged process, each of these $n$ points has independent probability $p = \frac{\theta \lambda}{(1+\theta) \lambda} = \frac{\theta}{1+\theta} < \frac{1}{2}$
to be from the process $N_{1}$ and probability $q = 1-p $ from the process $\tilde{N}_{0}$. If we associate $+1$ with each point from $N_{1}$, and $-1$ with each point from $\tilde{N}_{0}$, then each configuration of these $n$ points induces a downward-drifting asymmetric random walk of length $n$. The random walk starts at the origin $0$, with up probability $p$ and down probability $q$. 

\begin{lemma}
\label{decomposition}
Let $\mathcal{R}_{n}$ denote the corresponding random walk of length $n$ described above, and $M_{n}$ denote the maximum level attained by $\mathcal{R}_{n}$, and $G_{n}$ denote the overall number of down-steps taken by $\mathcal{R}_{n}$. Also let $X_{n} \triangleq (X \mid \mathcal{N} = n)$ with $X$ defined in (\ref{def-x-y}). Then, $X_{n} = G_{n} + M_{n}$ almost surely.
\end{lemma}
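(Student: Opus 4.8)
The plan is to fix $n\ge 0$, argue conditionally on the event $\{\mathcal{N}=n\}$, and analyze the random step function
\[
g(r)\;:=\;\tilde{N}_{0}(1-r;\lambda)+N_{1}(r;\theta\lambda),\qquad r\in[0,1],
\]
whose maximum over $[0,1]$ is, by construction, $X_{n}$. The first step is to undo the mirror image. From the definition $\tilde{N}_{0}(u)=N_{0}(1;\lambda)-N_{0}(1-u;\lambda)$ we obtain $\tilde{N}_{0}(1-r;\lambda)=N_{0}(1;\lambda)-N_{0}(r;\lambda)$, so that $g(r)=N_{0}(1;\lambda)-N_{0}(r;\lambda)+N_{1}(r;\theta\lambda)$. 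Almost surely $N_{0}$ and $N_{1}$ have no atoms at $0$ or $1$ and no common atom, hence $g$ is a right-continuous step function with $g(0)=N_{0}(1;\lambda)$ that, as $r$ increases from $0$ to $1$, jumps down by $1$ at each point of $N_{0}$ and up by $1$ at each point of $N_{1}$.

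Next I would pin down the conditional law of the labelled point configuration. Conditioned on $\mathcal{N}=\tilde{N}_{0}(1;\lambda)+N_{1}(1;\theta\lambda)=N_{0}(1;\lambda)+N_{1}(1;\theta\lambda)=n$, the $n$ points of the merged process are i.i.d.\ uniform on $[0,1]$, and by Poisson colouring each point independently — and independently of all the positions — carries the label ``$+1$'' (it came from $N_{1}$) with probability $p=\theta/(1+\theta)$ and ``$-1$'' (it came from $\tilde{N}_{0}$, equivalently from $N_{0}$) with probability $q=1-p$. Listing the points in increasing order of position and reading off the labels therefore produces exactly an i.i.d.\ $\pm1$ sequence with up-probability $p$, i.e.\ the length-$n$ asymmetric walk $\mathcal{R}_{n}$ started at the origin; its number of down-steps is $G_{n}$, and $G_{n}=N_{0}(1;\lambda)=g(0)$.

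To conclude, I would observe that the trajectory of $g$ is a vertical translation of the trajectory of $\mathcal{R}_{n}$ by $G_{n}$. As $r$ passes the $j$-th point of the merged configuration, $g$ changes by exactly the $j$-th increment of $\mathcal{R}_{n}$, and $g$ is constant on the open gaps between consecutive points; hence on the interval immediately following the $j$-th point the value of $g$ equals $g(0)+\mathcal{R}_{n}(j)=G_{n}+\mathcal{R}_{n}(j)$, where $\mathcal{R}_{n}(j)$ is the level of the walk after $j$ steps (with $\mathcal{R}_{n}(0)=0$). Consequently the set of values attained by $g$ on $[0,1]$ is $\{\,G_{n}+\mathcal{R}_{n}(j):0\le j\le n\,\}$, and taking maxima gives
\[
X_{n}=\max_{r\in[0,1]}g(r)=G_{n}+\max_{0\le j\le n}\mathcal{R}_{n}(j)=G_{n}+M_{n}
\]
almost surely; since $n$ was arbitrary, $X_{n}=G_{n}+M_{n}$ a.s.

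The main obstacle I anticipate is the first step: one must be careful that, after the substitution $r\mapsto 1-r$, the mirrored process contributes \emph{decreasing} unit jumps whose locations are the original points of $N_{0}$, so that the increments of $g$ line up with the steps of $\mathcal{R}_{n}$ in the correct order and with the correct signs, and that the base level of $g$ is precisely the total number of down-steps $G_{n}$. The remaining ingredients — exchangeability of the merged points given their number, Poisson thinning/colouring, and the elementary fact that reordering the plateaus of a step function leaves its range unchanged — are routine.
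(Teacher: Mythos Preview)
Your proof is correct and follows essentially the same approach as the paper: both identify the step function $r\mapsto\tilde N_{0}(1-r)+N_{1}(r)$, conditioned on $\mathcal{N}=n$, with a vertical translate by $G_{n}$ of the asymmetric random walk $\mathcal{R}_{n}$, and then read off the maximum. Your version is slightly cleaner in that you first undo the mirror to obtain $g(r)=N_{0}(1)-N_{0}(r)+N_{1}(r)$ and track the $\pm1$ increments directly, whereas the paper carries out the equivalent bookkeeping by splitting, at the $l$-th occurrence of $N_{1}$, the up-steps before $l$ and the down-steps after $l$.
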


However, it should be noted that $M_{n}$ and $G_{n}$ are correlated. To address the correlation between $M_{n}$ and $G_{n}$, we will replace $M_{n}$ by $M_{\infty}$. However, first we would like to obtain an expression for the hitting probability of a downward-drifting asymmetric random walk. This is done in Lemma \ref{hitting} given below. (\cite{lawler} provided a proof in Chapter 2, Section 2.2; for completeness, we present a shorter proof.)

\begin{lemma}
\label{hitting}
Consider a random walk defined by a sequence of independent random variables $E_{i}=1$ with probability $p$ and $-1$ with probability $q=1-p$. Let $S_{n} = \sum_{i=1}^{n} E_{i}$. Define $M_{\infty} \in [0,\infty) \bigcup \{\infty\}$ to be maximum level attained by the random walk (i.e., $M_{\infty}=\max_{n}S_{n}$). Given that $0\le p<q \le 1$ (downward drifting), then the probability that the random walk ever hits above level $b$ is
$\mathbb{P} \left(M_{\infty} \ge b \right) = \left(p/q \right)^{b}.$
\end{lemma}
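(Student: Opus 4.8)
The plan is to derive the hitting probability $h(b) \triangleq \mathbb{P}(M_\infty \ge b)$ for $b = 1, 2, 3, \ldots$ by a one-step (first-step) analysis together with a multiplicative (renewal-type) structure, and then check the boundary behavior to rule out the degenerate solution. First I would observe that by spatial homogeneity of the random walk, the probability of ever reaching level $b$ starting from $0$ factors as $h(b) = h(1)^b$: to reach level $b$ the walk must first reach level $1$, and from the moment it first sits at level $1$ the future increments are an independent copy of the original walk, which must then climb a further $b-1$ levels; iterating gives the product. So it suffices to compute $\alpha \triangleq h(1) = \mathbb{P}(\text{walk ever reaches } +1)$.

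Next I would set up a first-step equation for $\alpha$. Conditioning on $E_1$: with probability $p$ the walk is already at level $1$ (done); with probability $q$ it is at level $-1$, and to ever reach $+1$ from there it must first climb to $0$ and then from $0$ climb to $+1$, each an independent event of probability $\alpha$ by the homogeneity argument above. Hence
\[
\alpha = p + q\,\alpha^2 .
\]
This quadratic has roots $\alpha = 1$ and $\alpha = p/q$. Since $0 \le p < q$, the second root lies in $[0,1)$, so both are a priori admissible as probabilities, and the remaining work is to identify which one is correct.

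The main obstacle — the only genuinely delicate point — is excluding the spurious root $\alpha = 1$ and confirming $\alpha = p/q$. The cleanest route is a drift/ transience argument: since $p < q$, the increments have strictly negative mean $\mathbb{E}[E_i] = p - q < 0$, so by the strong law of large numbers $S_n/n \to p-q < 0$ almost surely, whence $S_n \to -\infty$ a.s. and in particular $M_\infty < \infty$ a.s. This already shows $M_\infty$ is a.s. finite, but to pin down $\alpha < 1$ I would argue directly that $\mathbb{P}(M_\infty \ge b) \to 0$ as $b \to \infty$ (again from $M_\infty < \infty$ a.s.); combined with $h(b) = \alpha^b$ this forces $\alpha < 1$, leaving $\alpha = p/q$ as the only possibility. (Alternatively, one can bound $\alpha$ away from $1$ via a Chernoff/exponential-martingale estimate: $(q/p)^{S_n}$ is a martingale, and optional stopping at the hitting time of level $1$, justified by the negative drift, yields $\alpha \le p/q$ directly, with the reverse inequality from the quadratic.) Either way we conclude $\mathbb{P}(M_\infty \ge b) = (p/q)^b$, and the boundary cases $p = 0$ (giving $0$, as the walk only moves down) and the limit $b = 0$ (giving $1$) are consistent.
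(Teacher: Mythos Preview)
Your argument is correct, but it follows a different route from the paper's own proof. You use a first-step decomposition together with the spatial homogeneity of the walk to obtain the factorization $h(b)=h(1)^b$, solve the quadratic $\alpha = p + q\alpha^2$, and then invoke the strong law of large numbers (or the finiteness of $M_\infty$) to discard the spurious root $\alpha=1$. The paper instead sets up the two-barrier gambler's ruin problem: it introduces the stopping time $\tau=\inf\{t\ge 1: S_t\le -a \text{ or } S_t\ge b\}$, observes that $(q/p)^{S_n}$ is a martingale (this is the exponential martingale you mention parenthetically), applies the Optional Sampling Theorem to get the exact exit probability $\mathbb{P}(S_\tau\ge b)=\bigl(1-(q/p)^{-a}\bigr)\big/\bigl((q/p)^b-(q/p)^{-a}\bigr)$, and then sends $a\to\infty$ via a monotone-limit argument to recover $(p/q)^b$. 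Your approach is more elementary and self-contained, needing only the strong Markov property and the SLLN; the paper's approach is slightly heavier machinery but gives the finite-barrier ruin probability as a by-product and avoids the separate step of ruling out a spurious root. Either is perfectly adequate here, and indeed you already flagged the martingale route as an alternative.
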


We are now ready to prove Proposition \ref{primary}.
\begin{proof}[Proof of Proposition \ref{primary}.]
First we shall prove that $\lim_{\lambda \rightarrow \infty} \mathbb{P}(X \ge C) = 1/2$.
Let $M_{\infty}$ be the maximum level attained by the infinite-step random walk defined above. Since the random walk has a negative drift, it follows from Lemma \ref{hitting} above that $\mathbb{P}(M_{\infty} \ge -\log \lambda/\log \theta) \le 1/\lambda.$ (Note that
$\theta <1$, so $-\log \lambda / \log \theta > 0$.) Now, we have
\begin{eqnarray}
\label{imp1}
\mathbb{P}(X_{n} \ge C) 
&=& \mathbb{P}\left(G_{n} + M_{n} \ge C \right) \\
&=& \mathbb{P}\left(G_{n} + M_{n} \ge C \bigcap M_{n} \ge -\frac{\log \lambda}{\log \theta}\right)  \nonumber 
+ \mathbb{P}\left(G_{n} + M_{n} \ge C \bigcap M_{n} < -\frac{\log \lambda}{\log \theta} \right)  \nonumber \\ 
&\le& \mathbb{P}\left(M_{n} \ge -\frac{\log \lambda}{\log \theta}\right) + \mathbb{P}\left(G_{n} \ge C + \frac{\log \lambda}{\log \theta}\right) 
\nonumber \\
&\le& \mathbb{P}\left(M_{\infty} \ge -\frac{\log \lambda}{\log \theta}\right) + \mathbb{P}\left(G_{n} \ge C + \frac{\log \lambda}{\log \theta}\right) \nonumber \\
&\le& \frac{1}{\lambda} + \mathbb{P}\left(G_{n} \ge C + \frac{\log \lambda}{\log \theta}\right). \nonumber 
\end{eqnarray}
The first equality follows from Lemma \ref{decomposition}. The first inequality follows from the fact that $M_{\infty} \ge M_{n}$ almost surely. The second inequality follows from Lemma \ref{hitting} above. Since $G_{n}$ is distributed as $(\tilde{N}_{0}(1; \lambda) \mid \mathcal{N}=n)$, we get from (\ref{imp1}) that,
\begin{eqnarray*}
\mathbb{P}(X \ge C) &=& \sum_{n=1}^{\infty} \mathbb{P}(X_{n} \ge C) \mathbb{P}(\mathcal{N}=n)  
\le \frac{1}{\lambda} + \sum_{n=1}^{\infty}  \mathbb{P}\left(G_{n} \ge C + \frac{\log \lambda}{\log \theta}\right) \mathbb{P}(\mathcal{N}=n)\nonumber \\
&=&\frac{1}{\lambda} + \mathbb{P}\left(\tilde{N}_{0}(1; \lambda) \ge C + \frac{\log \lambda}{\log \theta}\right) 
= \frac{1}{\lambda} + \mathbb{P}\left(\mathrm{Poisson}(\lambda)  \ge C + \frac{\log \lambda}{\log \theta}\right) . \nonumber 
\end{eqnarray*}
By virtue of Central Limit Theorem, we have 
\begin{eqnarray*}
\lim_{\lambda \rightarrow \infty} \mathbb{P}(X \ge C) &\le& 
\lim_{\lambda \rightarrow \infty} \mathbb{P}\left(\mathrm{Poisson}(\lambda)  \ge C - \frac{\log \lambda}{\log \theta^{-1}}\right) \\
&=& \lim_{\lambda \rightarrow \infty} \mathbb{P}\left(\mathrm{Poisson}(\lambda)  \ge \lambda - o(\sqrt{\lambda}) \right)  = \frac{1}{2}.
\end{eqnarray*}
On the other hand, from the definition of $X$, it is clear that
$\lim_{\lambda \rightarrow \infty} \mathbb{P}(X \ge C) \ge  \lim_{\lambda \rightarrow \infty} \mathbb{P}(N_{0}(1;\lambda) \ge C) = 
\lim_{\lambda \rightarrow \infty} \mathbb{P}(\mathrm{Poisson}(\lambda) \ge \lambda ) = 1/2$.

Now, we are ready to prove the second part of Proposition \ref{primary}, i.e., $\lim_{\lambda \rightarrow \infty} \mathbb{P}(Y \ge C) = 0$.
Since $\theta_{1}\in [0,1)$, we can always find a $\bar{\theta}_{1}$ such that $\theta_{1} < \bar{\theta}_{1} < 1$. Then $\bar{\theta}_{1} > \theta_{1} \ge \theta_{2}$, and define
$\bar{Y} = \max_{r \in [0,1]} \left\{ \bar{N}_{1}(1-r; \bar{\theta}_{1} \lambda) + 
N_{2}(r; \theta_{2} \lambda) \right\}.$
It is easy to see that $\bar{Y}$ stochastically dominates $Y$. Therefore, without loss of generality, we simply drop the bar of $\bar{\theta}_{1}$ and $\bar{Y}$, and assume that $\theta_{2} / \theta_{1} = \theta <1$. Following the same argument above, we have
$\mathbb{P}(Y \ge C) \le \frac{1}{\lambda} + \mathbb{P}\left(\mathrm{Poisson}(\theta \lambda) \ge \lambda 
+ \frac{\log \lambda}{\log \theta}\right),$ and again by virtue of Central Limit Theorem, we have
\begin{eqnarray*}
\lim_{\lambda \rightarrow \infty} \mathbb{P}(Y \ge C) &\le& 
\lim_{\lambda \rightarrow \infty} \mathbb{P}\left(\mathrm{Poisson}(\theta \lambda)  \ge C - \frac{\log \lambda}{\log \theta^{-1}}\right) \\
&=& \lim_{\lambda \rightarrow \infty} \mathbb{P}\left(\mathrm{Poisson}(\theta  \lambda)  \ge \lambda  - o(\sqrt{\lambda}) \right)  =0.
\end{eqnarray*}
This completes the proof. 
\end{proof}

We have shown that if $0<\gamma \le 1$, $\lim_{\lambda \rightarrow \infty} P_{0} \le 1/2$, and $\lim_{\lambda \rightarrow \infty} P_{1} =0$. Now if $\gamma = 0$, it follows that no arriving customer will start service immediately. Thus, we have $\lim_{\lambda \rightarrow \infty} P_{1} = 1/2$.

\subsection{The General Case}
Next we extend the simple model to allow for an arbitrary finite discrete reservation distribution $D$ with marginal probability mass function $f_{D}(d)$. We still assume that the service distribution remains fixed at $S=1$, deterministically. Now let  $f_{D}(d) = \gamma_{d}$ for $d \in [0,u]$, $0\le \gamma_{d}\le 1$ and $\sum_{d=1}^{u} \gamma_{d}=1$. Lemma \ref{ext1} below is a generalization of Lemma \ref{sst}. 
\begin{lemma}
\label{ext1}
Consider the counterpart system with an infinite number of servers, a customer arriving at the system at time $0$ in steady-state, observes that the pre-arrivals follow a non-homogeneous Poisson input process with piecewise rate $\eta(r)$ at time $r$
\begin{equation*}
\label{sstg}
\eta(r) =
\begin{cases}
\lambda, & \text{if} \qquad r \le 0,\\
\lambda \left(1- \sum_{d= 0}^{\left\lceil r \right \rceil  -1} \gamma_{d}\right), & \text{if} \qquad r>0.
\end{cases}
\end{equation*}
\end{lemma}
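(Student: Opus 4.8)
The plan is to extend the Poisson-splitting argument behind Lemma~\ref{firstl}. The arrival epochs form a homogeneous Poisson process of rate $\lambda$ carrying i.i.d.\ reservation marks $d_i$ with $\mathbb{P}(d_i=d)=\gamma_d$, $d\in\{0,\dots,u\}$. By PASTA --- or, at the level of point processes, by the Slivnyak--Mecke property, which says that conditioning a Poisson process on a point at a fixed location leaves the law of the remaining points unchanged --- the tagged customer arriving at time $0$ sees the pre-arrival epochs $\{t_i<0\}$ distributed as a rate-$\lambda$ Poisson process on $(-\infty,0)$ with the same i.i.d.\ marks. The ``pre-arrivals'' whose rate we must compute are the induced \emph{starting service times} $\{t_i+d_i\}$, i.e.\ the image of this marked process under the displacement map $(t,d)\mapsto t+d$; so the statement is exactly a computation of that image intensity.

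I would carry this out in four steps. (i) \emph{Split by reservation value}: for each $d$, the sub-stream of pre-arrival epochs carrying mark $d$ is, by Poisson thinning, an \emph{independent} Poisson process of rate $\lambda\gamma_d$ on $(-\infty,0)$. (ii) \emph{Shift}: translating the $d$-th sub-stream by $d$ turns it into a Poisson process of rate $\lambda\gamma_d$ on $(-\infty,d)$ and rate $0$ on $(d,\infty)$, since $t_i<0\iff t_i+d<d$. (iii) \emph{Superpose}: because the $d$-indexed sub-streams are mutually independent, their superposition over $d=0,\dots,u$ is again a non-homogeneous Poisson process whose rate at $r$ is the pointwise sum $\eta(r)=\sum_{d:\,d>r}\lambda\gamma_d$ (an epoch $t_i=r-d$ contributes at location $r$ only when $t_i<0$, i.e.\ $d>r$). (iv) \emph{Simplify using that $D$ is integer-valued}: for $r<0$ every $d\ge 0>r$, so $\eta(r)=\lambda\sum_d\gamma_d=\lambda$; for $r>0$ not an integer, $\{d:d>r\}=\{d\ge\lceil r\rceil\}$, hence $\eta(r)=\lambda\bigl(1-\sum_{d=0}^{\lceil r\rceil-1}\gamma_d\bigr)$, which is $0$ once $r\ge u$ --- as it must be, since $t_i+d_i<u$ always. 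The finitely many integer values of $r$ carry no mass for a Poisson process, so the piecewise formula in the statement is the correct description of the law.

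The step requiring the most care is (iii) together with the conditioning in the first paragraph: one must justify that the superposition of the shifted sub-streams is genuinely Poisson with rate equal to the sum of the component rates --- this rests on the mutual independence of the $d$-indexed sub-streams, which holds precisely because they come from independently thinning a \emph{single} Poisson stream --- and that observing the system at the instant of an arrival does not distort the law of the earlier marked arrivals. The remaining ingredients (the deterministic shift, the case analysis producing $\lceil r\rceil$, the vanishing of $\eta$ beyond $u$) are routine bookkeeping. Specializing to $u=1$, $\gamma_0=\gamma$, $\gamma_1=1-\gamma$ recovers Lemma~\ref{firstl}.
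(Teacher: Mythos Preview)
Your argument is correct and is essentially the same Poisson thinning/superposition computation the paper carries out, just phrased in marked-point-process language (Slivnyak--Mecke, displacement map) rather than the paper's interval-by-interval bookkeeping. The paper fixes a unit interval $(\lceil r\rceil-1,\lceil r\rceil]$ and sums the thinned contributions $\gamma_l\lambda$ from each shifted arrival interval, whereas you thin first by $d$, shift, and then superpose; both yield $\eta(r)=\sum_{d>r}\lambda\gamma_d$ and hence the stated piecewise formula.
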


\begin{figure}[ht]
\centering
\includegraphics[scale=0.5]{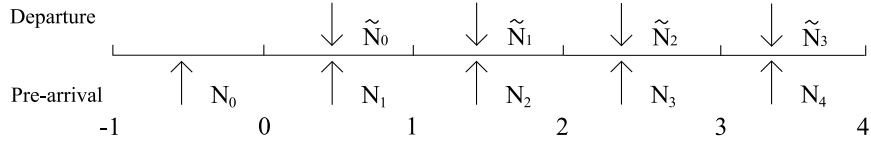}
\caption{One-class departure and pre-arrival processes with general reservation distribution}
\label{adv4}
\end{figure}

Define $N_{d}$ (for $d\in [0,u]$) to be the process of pre-arrivals prior to $t$ over  $(d-1, d]$. This process induces a departure process over the interval $(d, d+1]$, and let $\tilde{N}_{d}$ denote its mirror image. Figure \ref{adv4} shows the pre-arrival and departure processes with general reservation distribution. The conditional virtual blocking probabilities are given in Lemma \ref{gl1} below, which is a generalization of Lemma \ref{sl1}. 

\begin{lemma}
\label{gl1}
Consider the counterpart system with an infinite number of servers, if a customer comes at time $0$ in steady-state and requests service ($S=1$) deterministically to commence in $D$ units of time ($D\in [0,u]$), the conditional virtual blocking probability is given by
\begin{eqnarray*}
P_{d} &\triangleq& \mathbb{P}(B \mid D=d) \triangleq \mathbb{P} \left( \max_{r \in [0,1]}
\left\{ \tilde{N}_{d}(1-r;\lambda_{d}) + N_{d+1}(r;\lambda_{d+1}) \right\} \ge C \right), \qquad \forall d \in [0,u],
\end{eqnarray*}
where $N_{d}$ is a Poisson counting process with rate $\lambda_{d}=\lambda\left(1- \sum_{i=0}^{d-1}\gamma_{i}\right)$, and $\tilde{N}_{d}$ is a mirror image of $N_{d}$ with the same rate $\lambda_{d}$.
\end{lemma}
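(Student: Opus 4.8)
The plan is to follow the route used for Lemma~\ref{sl1}, replacing the two-point reservation law by the general pmf $(\gamma_d)_{d=0}^{u}$ and invoking Lemma~\ref{ext1} in place of Lemma~\ref{firstl}. The first ingredient is that the tagged customer's own reservation value $D$ is drawn independently of the arrival process and of all other customers, so conditioning on $D=d$ does not disturb the pre-arrival process: just before time $0$ it is still the non-homogeneous Poisson process described by Lemma~\ref{ext1}, whose rate on the slot $(d'-1,d']$ equals $\lambda_{d'}=\lambda\bigl(1-\sum_{i=0}^{d'-1}\gamma_i\bigr)=\lambda\,\mathbb{P}(D\ge d')$ (an empty sum being $0$, so $\lambda_0=\lambda$, which recovers the rates of Lemma~\ref{sl1}).

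Next I would translate ``virtually blocked'' into a statement about these pre-arrivals. Because every service duration equals $1$ deterministically, a pre-arrival with starting service time $\sigma$ occupies exactly the unit interval $[\sigma,\sigma+1]$; hence the number of units reserved just prior to time $0$ at a time point $d+r$, with $r\in[0,1]$, equals the number of pre-arrivals whose starting time lies in $[d+r-1,\,d+r]$. This interval meets only the two slots $(d-1,d]$ and $(d,d+1]$ and splits as $[d+r-1,d]\cup[d,d+r]$. The count of pre-arrivals in $[d,d+r]$ is, by definition of $N_{d+1}$, the random variable $N_{d+1}(r;\lambda_{d+1})$; the count of pre-arrivals in $[d+r-1,d]$ is the number of points of $N_d$ lying in the last $(1-r)$-fraction of the slot $(d-1,d]$, which is precisely $\tilde N_d(1-r;\lambda_d)$ by the definition $\tilde N_d(s)=N_d(1)-N_d(1-s)$ of the mirror image. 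Summing, the reserved capacity over the requested interval $[d,d+1]$, as a function of $r\in[0,1]$, equals $\tilde N_d(1-r;\lambda_d)+N_{d+1}(r;\lambda_{d+1})$, and the customer is virtually blocked iff this quantity reaches $C$ for some $r\in[0,1]$. Since both counting processes are piecewise constant with finitely many jumps almost surely, the supremum over $r$ is attained and may be written as a maximum, giving the stated formula for $P_d$. (When $d=u$ one has $\lambda_{u+1}=0$, so $N_{u+1}\equiv 0$ and $P_u=\mathbb{P}\bigl(\tilde N_u(1;\lambda_u)\ge C\bigr)$, the analogue of $P_1$ in the two-point case.)

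Finally I would record the independence claims used above: $N_d$ and $N_{d+1}$ are built from the increments of the pre-arrival Poisson process over the disjoint slots $(d-1,d]$ and $(d,d+1]$ and are therefore independent, while $\tilde N_d$ is a measurable function of $N_d$; the rate identities are read off directly from Lemma~\ref{ext1}, and setting $u=1$, $\gamma_0=\gamma$ recovers Lemma~\ref{sl1}. I do not expect a genuine obstacle here --- the argument is essentially bookkeeping. The one step that needs care is the geometric reduction in the second paragraph: identifying exactly which pre-arrivals load the requested interval and matching the backward count over $[d+r-1,d]$ to the mirror-image parametrization $\tilde N_d(1-r;\lambda_d)$. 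It is worth stressing that this reduction uses the hypothesis $S\equiv 1$ in an essential way, since unit-length service intervals are what confine the interaction to two consecutive slots; relaxing $S$ is precisely what the subsequent, more involved, construction has to accommodate.
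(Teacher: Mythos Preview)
Your proposal is correct and follows essentially the same route as the paper's proof: invoke Lemma~\ref{ext1} to obtain the slot rates $\lambda_d$, interpret the shifted process $N_d$ as the departure process over $(d,d+1]$, and then reproduce verbatim the geometric bookkeeping of Lemma~\ref{sl1}. The paper's proof is in fact terser than yours---it simply records the rates and says ``the rest of arguments is identical to that of Lemma~\ref{sl1}''---so your explicit decomposition of $[d+r-1,d+r]$ into the two adjacent slots and your matching of the backward count to $\tilde N_d(1-r)$ are exactly the content being deferred there.
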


Proposition \ref{gl2} is a generalization of Proposition \ref{primary} with general reservation distribution. The traffic intensity $\rho = \lambda$ since the service distribution $S=1$ deterministically. 
\begin{proposition}
\label{gl2}
Let the service distribution $S=1$ deterministically, and the reservation distribution $D$ be discrete with marginal probability mass function $f_{D}(d)=\gamma_{d}$ and bounded support $[0,u]$. Let the index $i = \min\{d: \gamma_{d} >0\}$. 
Then, in the high-volume regime where $C  = \rho \rightarrow \infty$, the conditional long-run virtual blocking probabilities  
\begin{eqnarray*}
\lim_{\lambda \rightarrow \infty} P_{i} = \frac{1}{2}; \qquad
\lim_{\lambda \rightarrow \infty} P_{d} = 0, \text{ for } d \in [i+1,u].
\end{eqnarray*}
\end{proposition}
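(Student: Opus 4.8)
The plan is to reduce Proposition \ref{gl2} to Proposition \ref{primary} by reading off the rate parameters supplied by Lemma \ref{gl1} and matching them, case by case, to the constants $\theta_1,\theta_2$ of Proposition \ref{primary}. Recall from Lemma \ref{gl1} that for every $d\in[0,u]$ one has
\[
P_d=\mathbb{P}\!\left(\max_{r\in[0,1]}\left\{\tilde{N}_d(1-r;\lambda_d)+N_{d+1}(r;\lambda_{d+1})\right\}\ge C\right),\qquad \lambda_d=\lambda\Bigl(1-\sum_{j=0}^{d-1}\gamma_j\Bigr),
\]
and that $N_d,N_{d+1}$ are the restrictions of the single pre-arrival Poisson process of Lemma \ref{ext1} to the disjoint slots $(d-1,d]$ and $(d,d+1]$, hence independent; consequently $\tilde{N}_d$ (a deterministic transform of $N_d$) and $N_{d+1}$ are independent, so the mutual-independence hypothesis of Proposition \ref{primary} is satisfied. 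Two elementary observations drive the whole argument: (i) $d\mapsto\lambda_d$ is non-increasing, because the partial sums $\sum_{j=0}^{d-1}\gamma_j$ are non-decreasing, so in each pair the mirror (departure) process carries the larger of the two rates, exactly as in the definitions of $X$ and $Y$; and (ii) by the choice $i=\min\{d:\gamma_d>0\}$ we have $\gamma_0=\cdots=\gamma_{i-1}=0$, whence $\lambda_0=\cdots=\lambda_i=\lambda$ and $\lambda_{i+1}=(1-\gamma_i)\lambda$ with $0\le 1-\gamma_i<1$.

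First I would handle $P_i$. Setting $\theta_1:=1-\gamma_i\in[0,1)$ and $\theta_2:=0$, the random variable inside $P_i$ coincides in distribution with the variable $X$ of Proposition \ref{primary} (its ``$\lambda$'' being our $\lambda=C$), since $\tilde{N}_i$ has rate $\lambda$ and $N_{i+1}$ has rate $\theta_1\lambda$. Proposition \ref{primary} then gives $\lim_{\lambda\to\infty}P_i=\lim_{\lambda\to\infty}\mathbb{P}(X\ge C)=\tfrac12$. (In the degenerate subcase $\gamma_i=1$, so $u=i$, the process $N_{i+1}$ carries rate $0$ and $P_i=\mathbb{P}(\mathrm{Poisson}(\lambda)\ge C)\to\tfrac12$ directly, so the claim still holds and this subcase is vacuous for the second part.)

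Next, for each fixed $d\in[i+1,u]$ I would put $\theta_1:=1-\sum_{j=0}^{d-1}\gamma_j$ and $\theta_2:=1-\sum_{j=0}^{d}\gamma_j$, so that $\lambda_d=\theta_1\lambda$ and $\lambda_{d+1}=\theta_2\lambda$. One checks $1>\theta_1\ge\theta_2\ge 0$: indeed $\theta_2\ge 0$ since $\sum_{j=0}^d\gamma_j\le 1$; $\theta_1\ge\theta_2$ since $\gamma_d\ge 0$; and, crucially, $\theta_1\le 1-\gamma_i<1$ because $d-1\ge i$, so the partial sum defining $\theta_1$ already contains the strictly positive mass $\gamma_i$. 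With these constants the variable inside $P_d$ coincides in distribution with the variable $Y$ of Proposition \ref{primary}, and we obtain $\lim_{\lambda\to\infty}P_d=\lim_{\lambda\to\infty}\mathbb{P}(Y\ge C)=0$.

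The proof is essentially bookkeeping with the masses $\gamma_j$, and the only point demanding care — the ``hard part'' such as it is — is the strict inequality $\theta_1<1$ in the second step: once the reservation-slot index exceeds $i$, the cumulative reservation mass has strictly lowered the effective arrival rate of the mirror process below the capacity $C=\lambda$, and this is precisely what makes Proposition \ref{primary}'s $Y$-bound applicable rather than its $X$-bound; it is also the one place where $i$ being the \emph{minimal} occupied slot is used. Degenerate rates (e.g.\ $\lambda_{d+1}=0$ when $\sum_{j\le d}\gamma_j=1$) are harmless, since Proposition \ref{primary} allows $\theta_2=0$, and observation (i) guarantees the larger rate always sits on the mirror process as the form of $X,Y$ requires. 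This establishes both limits and completes the proof.
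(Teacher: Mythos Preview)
Your proposal is correct and follows essentially the same approach as the paper: both arguments read off the rates $\lambda_d$ from Lemma \ref{gl1}, identify the resulting expression for $P_d$ with the variables $X$ (when $d=i$) or $Y$ (when $d>i$) of Proposition \ref{primary}, and verify that the hypothesis $1>\theta_1\ge\theta_2\ge0$ holds precisely because the minimal occupied slot $i$ guarantees $\gamma_i>0$. Your version is in fact a bit more explicit than the paper's in checking the constants and the degenerate subcase $\gamma_i=1$, but the underlying reduction is identical.
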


Next we extend the model further to allow for an arbitrary finite discrete service distribution. The total arrival rate is $\lambda$, and the reservation distribution $D$ is defined on $[0,u]$ defined as in Section 3.2. Now assume that the service time $S$ is a general finite discrete distribution on $[1,v]$. More specifically, let $f_{S}(\cdot)$ be the marginal probability mass function with $f_{S}(s)= \mathbb{P}(S=s) =\kappa_{s}$, where $\sum_{s=1}^{v}\kappa_{s} =1$ and $0\le \kappa_{s} \le 1$, for each $s\in [1,v]$. 

We partition the arriving customers according to their requested service time, i.e., the customers are partitioned into $v$ disjoint sets numbered $1,\ldots,v$ according to their requested service time. For each $s\in [1,v]$, the arrival process of customers in set $s$ follows a thinned Poisson process with rate $\kappa_{s}\lambda$. Moreover, these processes are independent of each other. Now, for each set $s \in [1,v]$, let the conditional reservation probability mass function be $\mathbb{P}(D=d \mid S=s) = \gamma^{s}_{d}$ for $d \in [0,u]$. Note that $\sum_{d=0}^{u} \gamma^{s}_{d} = 1$, for each $s \in [1,v]$.

Consider the counterpart system with an infinite number of servers, if a customer of set $s$ ($s\in [1,v]$) arrives at time $0$ in steady-state and requests $s$ units of service time to commence after $d$ units of time ($d\in [0,u]$), the conditional virtual blocking probability is defined as $P_{d}^{s} \triangleq \mathbb{P}(B \mid D=d, S=s)$. In addition, the traffic intensity is $\rho = \sum_{s=1}^{v} s \kappa_{s} \lambda = \mu \lambda$, where $\mu = \sum_{s=1}^{v}  s \kappa_{s} $ is the mean service time. 

Let $N_{d}^{s}$ (for $s \in [1,v]$ and $d \in [0,u]$) denote the pre-arrival process of set-$s$ customers over (i.e., customers requesting $s$ units of service time) the interval $(d-s,d-s+1]$. This induces a departure process over the interval $(d,d+1]$, and let $\tilde{N}_{d}^{s}$ denote its mirror image. The rate of $N^{s}_{d}$ ( and $\tilde{N}^{s}_{d}$) is given in Lemma \ref{rs} below. 

\begin{lemma}
\label{rs}
Let $N^{s}_{d}$ and $\tilde{N}^{s}_{d}$ be defined as above. Then, for each $s \in [1,v]$ and each $d \in [0,u]$, $N^{s}_{d}$ and $\tilde{N}^{s}_{d}$ are Poisson processes with the same rate
\begin{equation}
\label{rates}
\lambda^{s}_{d} = \lambda_{0}^{s} \left(1 - \sum_{i=0}^{d-s} \gamma_{i}^{s}\right) = \kappa_{s}\lambda \left(1 - \sum_{i=0}^{d-s} \gamma_{i}^{s}\right).
\end{equation}
Moreover, $N_{d}^{s}$ is independent of $N_{d'}^{s'}$ for $d \ne d'$ or $s \ne s'$.
\end{lemma}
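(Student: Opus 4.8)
The plan is to realize every $N^s_d$ as the restriction of a single marked Poisson process to a static region determined by $(s,d)$, and then to read off the rate, the Poisson property, and the independence structure from the elementary splitting, mapping, superposition and time-reversal properties of Poisson processes.

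First I would split the overall Poisson arrival stream (rate $\lambda$ on $\mathbb{R}$) according to the pair of i.i.d.\ marks $(s,d')$ carried by each customer — its requested service length and its advance-reservation time — which have joint law $\mathbb{P}(S=s,D=d')=\kappa_s\gamma^s_{d'}$ and are independent of the arrival epochs. By Poisson splitting, customers of type $(s,d')$ arrive along mutually independent Poisson processes $\Pi^s_{d'}$ on $\mathbb{R}$ with rate $\kappa_s\lambda\gamma^s_{d'}$. Next, for fixed $s$, I would apply the displacement (mapping) theorem: translating each point of $\Pi^s_{d'}$ by the deterministic amount $+d'$ sends arrival epochs to starting-service epochs, and since Lebesgue measure is translation-invariant the image is again Poisson with the same rate; superposing over $d'$ yields a marked Poisson process $\Xi^s$ on $\mathbb{R}\times\{0,\dots,u\}$, where a point $(y,d')$ encodes ``a set-$s$ customer whose service starts at $y$ with advance time $d'$'', with intensity $\kappa_s\lambda\gamma^s_{d'}\,dy$ on $\mathbb{R}\times\{d'\}$. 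Because the infinite-server counterpart has no blocking, this construction is stationary and there is nothing to prove about convergence to equilibrium; ``time $0$'' is merely a reference epoch.

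The crux is to translate ``pre-arrival with starting service epoch in $(d-s,d-s+1]$'' into a subset of $\Xi^s$. A point $(y,d')$ of $\Xi^s$ is such a pre-arrival (arrival epoch $y-d'<0$, starting epoch $y\in(d-s,d-s+1]$) if and only if $d'\ge d-s+1$ and $y\in(d-s,d-s+1]$: from $y\le d-s+1$ and $d'\ge d-s+1$ one gets $y-d'\le 0$, and conversely a pre-arrival with $y>d-s$ has $d'>y>d-s$, hence $d'\ge d-s+1$. So, writing $R_d:=(d-s,d-s+1]\times\{d':d'\ge d-s+1\}$, the whole process $N^s_d(\cdot)$ is a deterministic functional of $\Xi^s|_{R_d}$ (read in local time along $(d-s,d-s+1]$), and in particular $N^s_d(1)=\Xi^s(R_d)$. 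Because $\Xi^s|_{R_d}$ is Poisson with intensity $\big(\sum_{d'\ge d-s+1}\kappa_s\lambda\gamma^s_{d'}\big)\,dy$, constant over a unit interval, $N^s_d$ is a homogeneous Poisson counting process with rate $\lambda^s_d=\kappa_s\lambda\big(1-\sum_{i=0}^{d-s}\gamma^s_i\big)$ — with the empty-sum convention when $d<s$ — which is exactly (\ref{rates}); and $\tilde N^s_d$ is a homogeneous Poisson process on $[0,1]$ with the same rate by the time-reversal property already invoked in the one-class case.

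For the independence assertion I would argue as follows. With $s$ fixed and $d\ne d'$, the regions $R_d$ and $R_{d'}$ have disjoint first-coordinate projections $(d-s,d-s+1]$ and $(d'-s,d'-s+1]$, hence are disjoint, so $\Xi^s|_{R_d}$ and $\Xi^s|_{R_{d'}}$ are independent since restrictions of a Poisson process to disjoint sets are independent; and for $s\ne s'$ the processes $\Xi^s$ and $\Xi^{s'}$ are independent outright, being built from disjoint blocks of the original splitting. Hence $\{N^s_d\}$ are mutually independent, and since each $\tilde N^s_d$ is a function of $N^s_d$ the same holds with the mirror images included. I expect this independence claim in the case $s=s'$, $d\ne d'$ to be the only genuinely delicate point: the two processes superficially ``live on the same set-$s$ customers'', and the clean way around it is precisely to push everything forward to the starting-service-epoch picture $\Xi^s$, in which ``pre-arrival'' becomes baked into the fixed region $R_d$ and disjointness of the $R_d$ finishes the job. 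Everything else is routine Poisson-process bookkeeping.
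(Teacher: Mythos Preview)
Your argument is correct and rests on the same Poisson splitting/thinning idea as the paper's proof: both identify the pre-arrivals of set-$s$ customers starting in $(d-s,d-s+1]$ as exactly those with reservation $d'\ge d-s+1$ and sum the corresponding thinned rates $\kappa_s\lambda\gamma^s_{d'}$ to get (\ref{rates}). The paper does this by partitioning according to the arrival-epoch interval $(d-s-i,d-s-i+1]$ for $i=\max(0,d-s+1),\dots,u$ and invoking ``the Poisson splitting argument'' in one line for independence; you instead push everything forward to the starting-service-epoch picture via the displacement theorem and then read both the rate and the independence off a single marked Poisson process $\Xi^s$ restricted to the disjoint regions $R_d$. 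The payoff of your framing is that the independence assertion for $s=s'$, $d\ne d'$ becomes an immediate consequence of disjointness of the $R_d$'s, whereas the paper leaves that step implicit; conversely, the paper's bookkeeping is slightly shorter since it avoids setting up $\Xi^s$ and simply sums rates over admissible reservation values.
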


\begin{figure}[ht]
\centering
\includegraphics[scale=0.5]{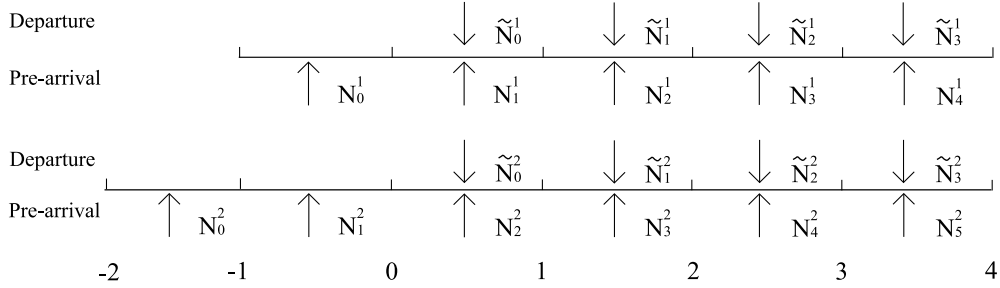}
\caption{Two-service-set departure and pre-arrival processes}
\label{adv1}
\end{figure}

First assume that $\exists s \in [1,v]$ such that $\gamma_{0}^{s}>0$, i.e., the probability of an arriving customer requesting to start the service immediately upon arrival is strictly positive. This assumption can be dropped later. Let $A_{d}$ be the maximum number of customers in the system over the interval $(d,d+1]$ for $d \in [0,u]$. In fact, one can derive an exact mathematical expression of each $A_{d}$ for $d \in [0,u]$,
\begin{equation}
\label{illustr}
A_{d} = \sum_{s=2}^{v} \sum_{i=d+1}^{d+s-1} N_{i}^{s}(1;\lambda_{i}^{s}) + \max_{r \in [0,1]} 
\left\{ \sum_{s=1}^{v} \tilde{N}_{d}^{s}(1-r;\lambda_{d}^{s}) + \sum_{s=1}^{v}  N_{d+s}^{s}(r;\lambda_{d+s}^{s}) \right\}.
\end{equation}
For $r \in [0,1]$, the term $\sum_{s=1}^{v} \tilde{N}_{d}^{s}(1-r;\lambda_{d}^{s})$ captures all the departures over $(d+r,d+1]$, the term $\sum_{s=1}^{v}  N_{d+s}^{s}(r;\lambda_{d+s}^{s})$ captures all the pre-arrivals over $(d,d+r]$, and the term $\sum_{s=2}^{v} \sum_{i=d+1}^{d+s-1} N_{i}^{s}(1;\lambda_{i}^{s})$ captures all the customers being served over $(d,d+1]$. The sum captures exactly all the customers being served at time $d+r$. It is important to note that since $\tilde{N}_{i}^{s}$ and $N_{i}^{s}$ do not simultaneously appear in $A_{d}$, for each $i \in [0,u]$ and $s \in [1,v]$, all the Poisson counting processes in the expression of $A_{d}$ are independent of each other (see Lemma \ref{rs}).

We shall further explain (\ref{illustr}) by providing the following example when $v=2$ (refer to Figure \ref{adv1}),
\begin{eqnarray*}
A_{0} &=& N_{1}^{2}(1;\lambda_{1}^{2}) + \max_{r\in [0,1]} \left\{ \tilde{N}_{0}^{1}(1-r;\lambda_{0}^{1}) + \tilde{N}_{0}^{2}(1-r;\lambda_{1}^{2}) + N_{1}^{1}(r;\lambda_{1}^{1}) + N_{2}^{2}(r;\lambda_{2}^{2}) \right\}, \\
A_{1} &=& N_{2}^{2}(1;\lambda_{2}^{2}) + \max_{r\in [0,1]} \left\{ \tilde{N}_{1}^{1}(1-r;\lambda_{1}^{1}) + \tilde{N}_{1}^{2}(1-r;\lambda_{1}^{2}) + N_{2}^{1}(r;\lambda_{2}^{1}) + N_{3}^{2}(r;\lambda_{3}^{2}) \right\}, \text{ and so on.}
\end{eqnarray*}
More specifically, $A_{0}$ represents the maximum customers in the system over the interval $(0,1]$. At time $r \in(0,1]$, the number of departures over $(r,1]$ is equal to $\tilde{N}_{0}^{1}(1-r;\lambda_{0}^{1}) + \tilde{N}_{0}^{2}(1-r;\lambda_{0}^{2})$, capturing customers in both sets starting before $0$ and still in the system at time $r$. (Note that the service time is at least $1$.) In addition, the number of pre-arrivals over $(0,r]$ is equal to $N_{1}^{1}(r;\lambda_{1}^{1}) + N_{2}^{2}(r;\lambda_{2}^{2})$, capturing pre-arrivals of customers with service time $1$ and $2$, respectively, starting service over $(0,r]$. Finally, $N_{1}^{2}$ captures set-$2$ customers with service time $2$ who started service within $(-1,0]$. These customers will continue service over the entire interval $(0,1]$. Therefore $N_{1}^{2}(1;\lambda_{1}^{2})$ appears in the expression $A_{0}$ outside the max. The same reasoning applies to $A_{i}$ for each $i\in [1,u]$. 

Now for each  $d \in [0,u]$ and $s \in [1,v]$, we have $P_{d}^{s} = \mathbb{P}(\max (A_{d}, \ldots, A_{d+s-1})\ge C)$. It should be noted that $A_{d}$ and $A_{d'}$ can be correlated. To analyze the limiting behavior of $P_{i}^{j}$, we first analyze the limits of $\mathbb{P}(A_{d}\ge C)$, for each $d \in [0,u]$.

\begin{lemma}
\label{ac}
Assume that there exists $s \in [1,v]$ such that $\gamma_{0}^{s} > 0$. Let $A_{d}$ be defined as in (\ref{illustr}). The traffic intensity is $\rho = \sum_{s=1}^{v}  s \kappa_{s} \lambda$. Then,  we have
\begin{eqnarray}
\lim_{\lambda \rightarrow \infty}  \mathbb{P}(A_{0}\ge C) = \frac{1}{2}; \qquad \lim_{\lambda \rightarrow \infty} \mathbb{P}(A_{d}\ge C) =  0,  
\text{ for } d \in [1,u]. 
\end{eqnarray}
\end{lemma}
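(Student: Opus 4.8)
The plan is to lift the random-walk decomposition behind Proposition~\ref{primary} from the two-process expression (\ref{def-x-y}) to the multi-set expression (\ref{illustr}), isolating an independent Poisson ``bulk'' term whose mean equals $\rho$ when $d=0$ and falls strictly below $\rho$ (by an amount linear in $\lambda$) when $d\ge 1$.

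\emph{Step 1: decomposition of $A_d$.} Write $A_d=V_d+M_d$ with the static term $V_d:=\sum_{s=2}^{v}\sum_{i=d+1}^{d+s-1}N_i^s(1;\lambda_i^s)$ and $M_d:=\max_{r\in[0,1]}\bigl\{\sum_s\tilde N_d^s(1-r;\lambda_d^s)+\sum_s N_{d+s}^s(r;\lambda_{d+s}^s)\bigr\}$. Since $\tilde N_d^s(1-r)=N_d^s(1)-N_d^s(r)$, the process inside the maximum equals $\mathcal D_d+\mathcal W_d(r)$, where $\mathcal D_d:=\sum_s N_d^s(1;\lambda_d^s)$ is its value at $r=0$ and $\mathcal W_d(\cdot)$ is the continuous-time walk on $[0,1]$, started at $0$, that steps $+1$ at each point of $\bigcup_s N_{d+s}^s$ and $-1$ at each point of $\bigcup_s N_d^s$. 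Hence $A_d=V_d+\mathcal D_d+R_d$ with $R_d:=\max_{r\in[0,1]}\mathcal W_d(r)\ge 0$. By Lemma~\ref{rs}, $V_d$ is independent of $(\mathcal D_d,R_d)$, and the pair $(\mathcal D_d,R_d)$ is coupled exactly as $(G_n,M_n)$ in Lemma~\ref{decomposition}: conditioning on the total number $\mathcal N_d$ of points of $\bigcup_s(N_d^s\cup N_{d+s}^s)$ in $(0,1]$, the superposition labels each of these $\mathcal N_d$ uniform points independently $-1$ with probability $q_d:=D_d^*/(D_d^*+U_d^*)$ and $+1$ with probability $p_d:=U_d^*/(D_d^*+U_d^*)$, where $D_d^*:=\sum_s\lambda_d^s$, $U_d^*:=\sum_s\lambda_{d+s}^s$; so given $\mathcal N_d=n$, $\mathcal W_d$ is the first $n$ steps of an i.i.d.\ $(\pm1)$ walk, $\mathcal D_d$ is its number of $-1$ steps, and $R_d\le M_\infty$.

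\emph{Step 2: the two structural facts.} From (\ref{rates}) and $\sum_s\kappa_s=1$ one computes $D_d^*=\lambda\,\mathbb P(D+S\ge d+1)$ and $U_d^*=\lambda\,\mathbb P(D\ge d+1)$, so $U_d^*\le D_d^*$ always, with $U_0^*=(1-\gamma_0)\lambda<\lambda=D_0^*$ strictly, where $\gamma_0:=\mathbb P(D=0)=\sum_s\kappa_s\gamma_0^s>0$ by hypothesis; thus $p_0/q_0=1-\gamma_0<1$. Likewise, summing (\ref{rates}) (equivalently, reading $V_d+\mathcal D_d$ as the number of customers who arrived before time $0$ and are still in the $M/G/\infty$ counterpart at time $d$),
\[
\mathbb E[V_d+\mathcal D_d]=\lambda\,\mathbb E\bigl[S-\min\bigl(S,(d-D)^+\bigr)\bigr]=\rho-\lambda\,\mathbb E\bigl[\min\bigl(S,(d-D)^+\bigr)\bigr].
\]
For $d=0$ the correction vanishes, so $V_0+\mathcal D_0\sim\mathrm{Poisson}(\rho)=\mathrm{Poisson}(C)$; for $d\ge1$, since $\min(S,(d-D)^+)\ge 1$ whenever $D=0$, the correction is at least $\gamma_0\lambda$, so $\mathbb E[V_d+\mathcal D_d]\le\rho-\gamma_0\lambda$. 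For $d\ge1$ we also make the walk strictly downward drifting: superpose the departure processes $\tilde N_d^s$ with an extra independent Poisson process of total rate $\epsilon\lambda$ (fix $\epsilon<\gamma_0/2$). This enlarges $A_d$ pathwise, gives $p_d/q_d=U_d^*/(D_d^*+\epsilon\lambda)\le(1+\epsilon)^{-1}<1$, and raises $\mathbb E[V_d+\mathcal D_d]$ to at most $\rho-\gamma_0\lambda+\epsilon\lambda\le\rho-\tfrac12\gamma_0\lambda$, while $V_d$ stays independent of the (now inflated) $\mathcal N_d$.

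\emph{Step 3: assembling the bound.} We repeat the estimate in the proof of Proposition~\ref{primary}. Choose $b=b(\lambda)=\Theta(\log\lambda)=o(\sqrt\lambda)$ with $(p_d/q_d)^b\le 1/\lambda$. Conditioning on $\mathcal N_d=n$ and using the union bound, $\mathbb P(V_d+\mathcal D_d+R_d\ge C\mid\mathcal N_d=n)\le\mathbb P(R_d\ge b\mid\mathcal N_d=n)+\mathbb P(V_d+\mathcal D_d\ge C-b\mid\mathcal N_d=n)$; the first term is at most $\mathbb P(M_\infty\ge b)\le 1/\lambda$ by Lemma~\ref{hitting}, and averaging the second over $n$ collapses it—using $V_d\perp\mathcal N_d$ and $\sum_n\mathbb P(\,\cdot\mid\mathcal N_d=n)\mathbb P(\mathcal N_d=n)=\mathbb P(\cdot)$—to $\mathbb P\bigl(\mathrm{Poisson}(\mathbb E[V_d+\mathcal D_d])\ge C-b\bigr)$. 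For $d=0$ this yields $\mathbb P(A_0\ge C)\le 1/\lambda+\mathbb P(\mathrm{Poisson}(C)\ge C-o(\sqrt C))\to\tfrac12$, while the trivial lower bound $A_0\ge V_0+\mathcal D_0\sim\mathrm{Poisson}(C)$ gives $\liminf\mathbb P(A_0\ge C)\ge\tfrac12$, hence the limit $\tfrac12$. For $d\ge1$, with the (inflated) mean at most $\rho-\tfrac12\gamma_0\lambda$ and $C=\rho$, the gap $C-b-\mathbb E[V_d+\mathcal D_d]\ge\tfrac12\gamma_0\lambda-o(\sqrt\lambda)$ dominates the standard deviation $\Theta(\sqrt\lambda)$, so by the same CLT/concentration argument used in Proposition~\ref{primary} we get $\mathbb P(A_d\ge C)\to 0$.

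\emph{Main obstacle.} Steps~1 and~3 are structurally identical to Lemmas~\ref{decomposition}--\ref{hitting} and the proof of Proposition~\ref{primary}, the only new feature being the extra independent Poisson summand $V_d$, which passes through the union bound and the marginalization unchanged. The substantive work is Step~2: evaluating $\mathbb E[V_d+\mathcal D_d]$ from (\ref{rates}) (equivalently, the ``still-present pre-arrivals'' bookkeeping) to extract the uniform slack $\gamma_0\lambda$ for every $d\ge1$, checking $U_d^*\le D_d^*$, and running the inflation argument so the walk acquires a strictly negative drift without eroding that slack.
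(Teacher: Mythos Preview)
Your argument is correct and follows essentially the same route as the paper: reduce the $\max$ in (\ref{illustr}) to the random-walk picture of Lemma~\ref{decomposition}, bound the walk maximum via Lemma~\ref{hitting}, and conclude by the CLT once the remaining independent Poisson sum has mean $\rho$ (for $d=0$) or mean $\rho-\Theta(\lambda)$ (for $d\ge 1$). The paper's own proof is terser---it simply invokes ``Proposition~\ref{primary}'' and the identity $\sum_{s=2}^{v}\sum_{i=1}^{s-1}\lambda_i^s+\sum_{s}\lambda_0^s=\rho$, then says ``similarly'' for $d\ge1$---so in effect you have written out what the paper leaves to the reader: the extra independent summand $V_d$ passing through the conditioning, the verification that $\mathbb E[V_d+\mathcal D_d]\le\rho-\gamma_0\lambda$ when $d\ge1$, and the inflation trick (which is exactly the $\bar\theta_1$ device from the proof of the $Y$-part of Proposition~\ref{primary}) to force a strictly negative drift when $U_d^*=D_d^*$.
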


\begin{proposition}
\label{sec}
Let the service distribution $S$ be discrete with marginal probability mass function $f_{S}(s)=\kappa_{s}$ and bounded support $[1,v]$, and the reservation distribution $D$ be discrete with marginal probability mass function $f_{D}(d)=\gamma_{d}$ and bounded support $[0,u]$. The traffic intensity is given by $\rho = \sum_{s=1}^{v} s \kappa_{s} \lambda = \sum_{s=1}^{v}  s \lambda^{s}_{0}$. Let the index
$i = \min \left\{d: \gamma_{d}^{s} > 0 \text{ for some } s \in [1,v] \right\}.$ Then in the high-volume regime where $C  = \rho \rightarrow \infty$, for each $s \in [1,v]$, the conditional long-run virtual blocking probabilities  
\begin{eqnarray}
\lim_{\lambda \rightarrow \infty} P_{i}^{s} = \frac{1}{2}; \qquad
\lim_{\lambda \rightarrow \infty} P_{d}^{s} =  0, \text{ for } d\in [i+1,u].
\end{eqnarray}
\end{proposition}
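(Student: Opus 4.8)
The plan is to deduce Proposition~\ref{sec} from Lemma~\ref{ac} — which already does the analytic work, via the asymmetric-random-walk decomposition of Lemmas~\ref{decomposition} and~\ref{hitting} exactly as in the proof of Proposition~\ref{primary} — together with a time-shift of the reservation distribution that removes the assumption $\gamma_0^s>0$ imposed in Lemma~\ref{ac}. The bridge is the identity recorded just before Lemma~\ref{ac}: for all $d\in[0,u]$ and $s\in[1,v]$,
\[
P_d^s=\mathbb{P}\!\left(\max(A_d,A_{d+1},\dots,A_{d+s-1})\ge C\right),
\]
with the $A_j$ the (in general correlated) variables of (\ref{illustr}). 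So the only inputs I need are $\mathbb{P}(A_0\ge C)\to\tfrac12$ and $\mathbb{P}(A_j\ge C)\to 0$ for every $j\ge1$.

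First I would handle the case $i=0$, i.e.\ when the hypothesis of Lemma~\ref{ac} holds outright. For a lower bound keep a single term, $P_d^s\ge\mathbb{P}(A_d\ge C)$; for an upper bound apply the union bound over the at most $v$ indices, $P_d^s\le\sum_{j=d}^{d+s-1}\mathbb{P}(A_j\ge C)$. With $d=0$ these pinch $P_0^s$ between $\mathbb{P}(A_0\ge C)$ and $\mathbb{P}(A_0\ge C)+\sum_{j=1}^{s-1}\mathbb{P}(A_j\ge C)$, both of which tend to $\tfrac12$ by Lemma~\ref{ac}; with $d\ge1$ every term on the right tends to $0$, so $P_d^s\to0$. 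Two remarks: the argument uses only the displayed identity, not the conditioning interpretation, so it applies to every $s\in[1,v]$, including those with $\gamma_0^s=0$; and the correlation among the $A_j$ never enters, since the upper bound is a union bound and the lower bound uses one term. One small gap to plug is that when $d+s-1>u$ a few indices $j\in(u,u+v)$ appear that Lemma~\ref{ac} does not literally name; for those the pre-arrival terms in (\ref{illustr}) have rate $0$ (the reservation support is $[0,u]$), leaving $A_j$ equal in law to a Poisson count whose mean is below $C$ by a positive multiple of $\lambda$, so $\mathbb{P}(A_j\ge C)\to0$ by the same Central Limit estimate — i.e.\ Lemma~\ref{ac} extends without change to all $j\ge1$.

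To remove the assumption $i=0$ in general, reduce to the case just treated by shifting. Set $\bar D=D-i$: a discrete reservation distribution supported on $[0,u-i]$ with conditional pmf $\bar\gamma_d^s=\gamma_{d+i}^s$, keeping $S$ and $\lambda$ unchanged; since $\lambda$ and the $\kappa_s$ are untouched, the traffic intensity $\rho=\sum_s s\kappa_s\lambda$, hence the scaling $C=\rho\to\infty$, is preserved. Because $\gamma_j^s=0$ for all $j<i$, formula (\ref{rates}) gives, for $d\ge i$,
\[
\lambda_d^s=\kappa_s\lambda\Bigl(1-\sum_{j=0}^{d-s}\gamma_j^s\Bigr)=\kappa_s\lambda\Bigl(1-\sum_{k=0}^{(d-i)-s}\bar\gamma_{k}^{s}\Bigr)=\bar\lambda_{d-i}^{s},
\]
and the matching relabelling of the independent Poisson inputs of Lemma~\ref{rs} makes (\ref{illustr}) identify $A_d$ with $\bar A_{d-i}$ in distribution, so $P_d^s=\bar P_{d-i}^{s}$ for every $d\ge i$. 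Since $\min\{d:\bar\gamma_d^s>0 \text{ for some }s\}=0$, the $i=0$ case applies to the barred system and yields $\bar P_0^{s}\to\tfrac12$ and $\bar P_d^{s}\to0$ for $d\ge1$; reading this back through $P_d^s=\bar P_{d-i}^{s}$ gives $\lim_{\lambda}P_i^s=\tfrac12$ and $\lim_{\lambda}P_d^s=0$ for $d\in[i+1,u]$, as claimed.

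The genuine obstacle is upstream, in Lemma~\ref{ac}: the variable $A_0$ is a maximum over $r\in[0,1]$ of a sum of $2v$ mutually independent Poisson processes — half of them ``mirror images'' running backward in time — plus an independent Poisson count, and the whole difficulty is to show that this maximum exceeds its mean by only $O(\log\lambda)=o(\sqrt{\lambda})$, which is what the random-walk decomposition (Lemma~\ref{decomposition}) and the geometric hitting bound (Lemma~\ref{hitting}) deliver before the Central Limit Theorem finishes the job, just as in Proposition~\ref{primary}. Granting Lemma~\ref{ac}, the only places the present argument needs care are (i) checking that the time shift genuinely preserves the rates in (\ref{rates}) and the independence in Lemma~\ref{rs} — the displayed computation above — and (ii) disposing of the finitely many boundary slots $j\in(u,u+v)$, which is routine.
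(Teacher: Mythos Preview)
Your proposal is correct and follows essentially the same route as the paper: sandwich $P_d^s=\mathbb{P}(\max(A_d,\ldots,A_{d+s-1})\ge C)$ between a single-term lower bound and a union-bound upper bound, then invoke Lemma~\ref{ac}. The only difference is cosmetic: where the paper dismisses the case $i>0$ with ``the same arguments follow through'' after noting that no customer starts service over $(0,i]$ and that the departure/pre-arrival strict inequality holds at $(i,i+1]$, you make the reduction explicit via the time shift $\bar D=D-i$ and verify $\lambda_d^s=\bar\lambda_{d-i}^s$; you also spell out the (routine) disposal of the boundary slots $j\in(u,u+v)$, which the paper leaves implicit.
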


\subsection{Discussions of Our Results}
For each $k=1,\ldots, M$, let $Q_{dsk}$ be the stationary blocking probability of class-$k$ customers with reservation time $d$ and service time $s$, i.e., the stationary probability that a customer of such type arrives at a random time to the system and is rejected due to insufficient capacity at some point within the requested service interval. Note that the blocking probability $Q_{dsk}$ is well-defined since the corresponding underlying stochastic process is Ergodic (see the technical proof in the Appendix). Let $\lambda_{dsk} \triangleq \lambda_{k} \mathbb{P} (D_{k} = d , S_{k} =s)$ be the arrival rate of class-$k$ customers with reservation $d$ and service $s$. Then, the expected long-run average reward rate collected can be expressed as $\sum_{k=1}^{M}\sum_{d,s} r_{k}\lambda_{dsk}j(1-Q_{dsk})$.

If we merge the $M$-class arrival processes, the merged arrival process has an aggregate rate $\lambda = \sum_{k=1}^{M}\lambda_{k}$, and a customer upon arrival has probability of $\lambda_{k}/\lambda$ to be a class-$k$ customer. Define $v = \max_{k} v_{k}$ and $u = \max_{k} u_{k}$. Let $S$ (discrete with finite support $[1,v]$ and mean $\mu$) and $D$ (discrete with finite support $[0,u]$) be the \emph{merged} service and reservation distributions. The joint probability mass function of $S$ and $D$ is $f_{D,S}(d,s) \triangleq \mathbb{P}(D=d,S=s) = \sum_{k=1}^{M} \frac{\lambda_{k}}{\lambda} \cdot \mathbb{P}(D_{k}=d,S_{k}=s)$, for $d \in [0,u]$ and $s \in [1,v]$. Similarly, the marginal probability mass functions of $S$ and $D$ are $f_{S}(s) \triangleq \mathbb{P}(S=s) = \sum_{k=1}^{M} \frac{\lambda_{k}}{\lambda} \cdot \mathbb{P}(S_{k}=s)$ and $f_{D}(d) \triangleq \mathbb{P}(D=d) = \sum_{k=1}^{M} \frac{\lambda_{k}}{\lambda} \cdot \mathbb{P}(D_{k}=d)$, respectively, for $s \in [0,u]$ and $d \in [1,v]$. It is sufficient for the analysis to use only the marginal probability mass functions. This allows for arbitrary correlation between the reservation and service distributions of a given customer.

\begin{proof}[Proof of Theorem \ref{mainresult}]
Theorem \ref{mainresult}(a) follows directly from Proposition \ref{sec}. Theorem \ref{mainresult}(b) follows the identical arguments (by changing $\rho = (1-\epsilon)C$ everywhere) from Propositions \ref{primary}, \ref{gl2} and \ref{sec}.
\end{proof}

Theorem \ref{mainresult} provides the first asymptotic analysis on the blocking probabilities in loss network systems with discrete advanced reservation and service distributions. The performance analysis (on blocking probabilities) is completely different from the one used in \cite{levi} for models without advanced reservation. In the following, we also discuss the key distinction of our results from two related models.

\paragraph{Key Distinction from the Models without Advanced Reservation.}
In contrast to models without advanced reservation (e.g., \cite{levi}), the major challenges in analyzing the blocking probabilities in loss network systems with advanced reservation is that the blocking event depends on the maximum reserved capacity over a particular requested service interval. This requires the characterization of the booking profile (i.e., the pre-reserved arrival and departure processes) over the service interval. As a simple example with capacity $C = 2$ shown in Figure \ref{challenge}, in the models without advanced reservation, it suffices to check the instantaneous load of the system upon arrival of a customer. However, in the models with advanced reservation, we cannot guarantee one's request by merely checking the instantaneous load of the system at her starting service time upon her arrival, because her request may be potentially blocked by reserved slots of those customers who booked prior to her but will start services after her. (In this simple example, the system has only 1 customer in service when her service begins; however, during her requested service interval, there is a point in time that the system has 3 customers (who were reserved before her). Thus, she has to be rejected by the system.) This introduces much difficulties in handling this correlation issue between the incoming requests and the booking profiles.

\begin{figure}[ht]
\centering
\includegraphics[scale=0.7]{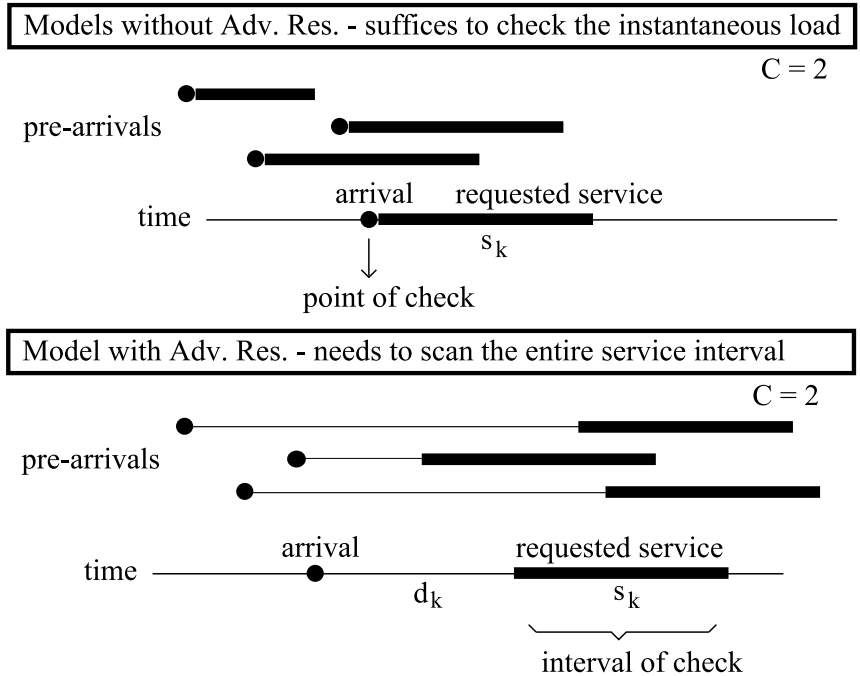}
\caption{Challenges in analyzing the blocking probabilities in loss queues with advanced reservation}
\label{challenge}
\end{figure}

\paragraph{Key Distinction from the Tandem Queues of Two Stations.}
Tandem queueing model is closely related to our model since one may regard the original system with advanced reservation as a tandem queueing model of two stations, where the advance reservation is spent in the first station and the service is spent in the second station. The first station has infinite capacity and the second station has finite capacity, customers first enter the system from the first station, but if the second station is full when customers arrive, they will be rejected or lost. Note that the decision whether a customer is blocked is made only after the service in the first station is over. However, in our model, we have to make the decision as soon as the customer enters the first station. The critical difference is that in the tandem queueing model it suffices to check the instantaneous load of the second station to determine if the customer is blocked, while in our model we have to check the maximum occupancy over the entire requested service interval as seen from the moment the customer arrives. When we relax the finite capacity assumption on the second station, the blocking probability can seemingly be approximated by the probability that the number of customers in the second station is bigger than $C$ (see, e.g., \cite{boxma}, \cite{schmidt}). However, this approximation cannot be used to upper bound the blocking probabilities in our system due to the difference in system dynamics. It may serve as an approximation of the blocking probabilities but we are unsure how good the approximation is, since the stationary distribution can no longer be expressed as a product-form.

\section{Applications in Revenue Management} \label{sec_rm}
\paragraph{Model.}
Next we show how the theoretical results obtained in Section \ref{sec_analysis} can be used to design and analyze provably near-optimal policies for core revenue management models. In particular, we focus our attention on models concerned with revenue management of reusable resources, such as \emph{hotel room management}, \emph{car rental management} and \emph{workforce management}. In all of these settings, customers arrive and seek to book resources some time in advance. Moreover, after a resource unit finishes serving a customer's request, it can be used to serve other customers. The objective is to design an admission control policy that optimizes the expected long-run average revenue rate.

We consider a single pool of resources of integer capacity $C < \infty$ that is used to satisfy the demands of $M$ different classes of customers. The customers of each class $k=1,\ldots,M$, arrive according to an independent Poisson process with respective rate $\lambda_{k}$. Each class-$k$ customer requests to reserve one unit of the capacity for a specified \emph{service time interval} in the future. Similar in Section 2, we let $D_{k}$ be the reservation distribution of a class-$k$ customer, and $S_{k}$ be the respective service distribution with mean $\mu_{k}$. During the time a customer is served, the requested unit cannot be used by any other customer; after the service is over, the unit becomes available again to serve other customers. If the resource is reserved, the customer pays a class-specific rate of $r_{k}$ dollars per unit of service time. The resource can be reserved for an arriving customer only if upon arrival there is at least one unit of capacity that is available (i.e., not reserved) throughout the entire requested interval. Specifically, a customer's request can be satisfied if the maximum number of already reserved resources throughout the requested service interval is smaller than the capacity $C$. However, customers can be rejected even if there is available capacity. Rejecting a customer now possibly enables serving more profitable customers in the future. Customers whose requests are not reserved upon arrival are \emph{lost} and leave the system. The goal is to find a feasible admission policy that maximizes the expected long-run average revenue. Specifically, if $\mathcal{R}_{\pi}(T)$ denotes the revenue achieved by policy $\pi$ over the interval $[0,T]$, then the expected long-run average revenue of $\pi$ is defined as $\mathcal{R}(\pi) \triangleq \lim \inf_{T \rightarrow \infty} (\mathbb{E}[\mathcal{R}_{\pi}(T)]/T)$, where the expectation is taken with respect to the probability measure induced by $\pi$. 

Like many stochastic optimization models, one can formulate this problem using dynamic programming approach. However, even in special cases (e.g., no advanced reservation allowed and with exponentially distributed service times), the resulting dynamic programs seem computationally intractable because the corresponding state space grows very fast. This is known as the \emph{curse of dimensionality}. Thus, finding provably good policies is a challenging task. 

\paragraph{An Improved Class Selection Policy.}
Next we describe a simple linear program (LP) that provides an upper bound on $(1-\epsilon)$ times the achievable expected long-run average revenue for every small positive $\epsilon$. The LP conceptually resembles to the one used by \cite{levi}, \cite{key} and \cite{iye} who study models without advanced reservation. It is also similar in spirit to the one used by \cite{adelman} in the queueing network framework with unit resource requirements again without advanced reservation. We shall show how to use the optimal solution of the LP to construct a simple admission control policy that is called \emph{improved class selection policy} (ICSP). The original class selection policy was first analyzed by \cite{levi} in models without advanced reservation.

At any point of time $t$, the state of the system is specified by the entire booking profile consisting of the class, reservation and service information of each customer in the booking system as well as the customers currently served. Without loss of generality, we restrict our attention to \emph{state-dependent policies}. Note that each state-dependent policy induces a Markov process over the state-space. Moreover, by following similar arguments as in \cite{lura} and \cite{sevastyanov}, one can show that the induced Markov process has a unique stationary distribution which is Ergodic (see the detailed proof in the Appendix).  Since any state-dependent policy induces a Markov process on the state-space of the system that is ergodic, for a given state-dependent policy $\pi$, there exists a long-run stationary probability $\alpha_{dsk}^{\pi}$ for accepting a class-$k$ customer who wishes to start service in $d$ units of time for $s$ units of time, which is equal to the long-run proportion of accepted customers of this type while running the policy $\pi$. In other words, any state-dependent policy $\pi$ is associated with the stationary probabilities $\alpha_{dsk}^{\pi}$ for all possible reservation time $d$, service time $s$ and class $k$. The mean arrival rate of accepted class-$k$ customers with reservation time $d$ and service time $s$ is $\alpha_{dsk}^{\pi} \lambda_{dsk}$. By applying Little's Law and the PASTA property (see \cite{gallager}), the expected number of class-$k$ customers with reservation time $d$ and service time $s$ being served in the system under state-dependent policy $\pi$ is $\alpha_{ijk}^{\pi} \lambda_{ijk} j$. It follows that under policy $\pi$ the expected long-run average number of resource units being used to serve customers can be expressed as $\sum_{k=1}^{M} \sum_{i,j}  \alpha_{ijk}^{\pi} \lambda_{dsk} s$. Fix a small $\epsilon < \min_{k}(\lambda_{k}\mu_{k}C^{-1})$, this gives rise to the \emph{knapsack} LP below:
\begin{eqnarray}
\label{mlp}
\max_{\alpha_{dsk}^{\pi}}  && \sum_{k=1}^{M} \sum_{d,s} r_{k} \alpha_{dsk}^{\pi} \lambda_{dsk} s, \\
\textrm{s.t.} && \sum_{k=1}^{M} \sum_{d,s} \alpha_{dsk}^{\pi} \lambda_{dsk} s \le (1-\epsilon)C,  \nonumber \\
&&  0 \le \alpha_{dsk} \le 1, \text{ } \forall d,s,k.  \nonumber 
\end{eqnarray}
Note that for each feasible state-dependent policy $\pi$, the vector $\alpha^{\pi} = \{\alpha_{dsk}^{\pi}\}$ is a feasible solution for the LP with objective value equal to the expected long-run average revenue of policy $\pi$. 

\begin{lemma}
\label{ub}
Let $\{\alpha^{*}_{dsk}\}$ be the optimal solution of (\ref{mlp}). Then the optimal objective value of (\ref{mlp}) is at least $(1-\epsilon)$ times the optimal expected reveune, i.e.,
$$
\sum_{k=1}^{M} \sum_{d,s} r_{k} \alpha^{*}_{dsk} \lambda_{dsk} s \ge (1-\epsilon)\mathcal{R}(OPT).
$$
\end{lemma}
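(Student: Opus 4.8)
The plan is to read the LP (\ref{mlp}) as a relaxation of the policy-optimization problem: once the $(1-\epsilon)$ factor is stripped from the capacity constraint, its feasible region contains the stationary acceptance vector of \emph{every} state-dependent policy, in particular that of an optimal one; scaling that vector by $1-\epsilon$ then lands it back inside the feasible region of (\ref{mlp}) while shrinking the objective by exactly the factor $1-\epsilon$.

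Concretely, I would first fix an optimal state-dependent policy $OPT$ (recall from the preceding discussion that restricting to state-dependent policies is without loss of generality; if the supremum of $\mathcal{R}(\cdot)$ over such policies is not attained, take for arbitrary $\delta>0$ a policy $\pi$ with $\mathcal{R}(\pi)\ge \mathcal{R}(OPT)-\delta$, run the argument below for $\pi$, and let $\delta\downarrow 0$ at the end). By the ergodicity of the Markov process induced by $OPT$ — established in the Appendix along the lines of \cite{lura} and \cite{sevastyanov} — the long-run acceptance frequencies $\alpha^{OPT}_{dsk}$ are well defined, the $\liminf$ in the definition of $\mathcal{R}(OPT)$ is an honest limit, and, as derived in the text via Little's Law and PASTA, $\mathcal{R}(OPT)=\sum_{k=1}^{M}\sum_{d,s} r_k\,\alpha^{OPT}_{dsk}\,\lambda_{dsk}\,s$. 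Moreover, since at no instant can more than $C$ resource units be in simultaneous use, the long-run average number of busy units satisfies $\sum_{k=1}^{M}\sum_{d,s}\alpha^{OPT}_{dsk}\,\lambda_{dsk}\,s\le C$. Together with $0\le\alpha^{OPT}_{dsk}\le 1$, this is precisely the statement that $\alpha^{OPT}$ is feasible for the program obtained from (\ref{mlp}) by replacing $(1-\epsilon)C$ with $C$.

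Next comes the scaling step. Set $\widehat{\alpha}_{dsk}:=(1-\epsilon)\,\alpha^{OPT}_{dsk}$ for all $d,s,k$. Then $0\le\widehat{\alpha}_{dsk}\le 1-\epsilon\le 1$ and $\sum_{k}\sum_{d,s}\widehat{\alpha}_{dsk}\lambda_{dsk}s=(1-\epsilon)\sum_{k}\sum_{d,s}\alpha^{OPT}_{dsk}\lambda_{dsk}s\le(1-\epsilon)C$, so $\widehat{\alpha}$ is a feasible solution of (\ref{mlp}), with objective value $(1-\epsilon)\sum_{k}\sum_{d,s} r_k\alpha^{OPT}_{dsk}\lambda_{dsk}s=(1-\epsilon)\mathcal{R}(OPT)$. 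Since $\{\alpha^{*}_{dsk}\}$ is an optimal solution of (\ref{mlp}), its objective value is at least that of the feasible point $\widehat{\alpha}$, which gives $\sum_{k}\sum_{d,s} r_k\alpha^{*}_{dsk}\lambda_{dsk}s\ge(1-\epsilon)\mathcal{R}(OPT)$, as desired.

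The argument is short, and the only place where real work hides is the second step: the identity $\mathcal{R}(OPT)=\sum r_k\alpha^{OPT}_{dsk}\lambda_{dsk}s$ and the capacity inequality both presuppose that $OPT$ induces a well-behaved (positive-recurrent, ergodic) stationary regime on the infinite-dimensional booking-profile state space, so that stationary acceptance frequencies exist and Little's Law and PASTA apply. Thus the main obstacle is not the LP manipulation but the justification of this ergodic structure, which is exactly what is deferred to the Appendix; once it is in hand, everything above is routine.
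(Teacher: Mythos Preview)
Your proof is correct and follows essentially the same scaling argument as the paper: both show that the optimal policy's stationary acceptance vector is feasible for the LP with right-hand side $C$, then scale by $(1-\epsilon)$ to land inside the feasible region of (\ref{mlp}). The only cosmetic difference is that the paper first passes through the optimal solution $\hat{\alpha}$ of the auxiliary LP with right-hand side $C$ and scales that, whereas you scale $\alpha^{OPT}$ directly; since $\alpha^{OPT}$ is feasible for that auxiliary LP and the objective is linear, the two routes are equivalent.
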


The LP defined in (\ref{mlp}) can be solved greedily. Without loss of generality, assume that classes are re-numbered such that $r_{1} \ge r_{2} \ge \ldots \ge r_{M}$. Then the optimal solution to (\ref{mlp}) is as follows: there exists a class $M' \le M$ such that $\alpha^{*}_{1}=\ldots = \alpha^{*}_{M'-1} = 1$, $\alpha^{*}_{M'} = 1- \epsilon C(\lambda_{M'}\mu_{M'})^{-1}$, $\alpha^{*}_{M'+1}=\ldots = \alpha^{*}_{M} = 0$.  This solution from solving the LP gives rise to the ICSP: 
\begin{enumerate}[(1)]
\item For each $k=1,\ldots,M'-1$, accept the customer upon arrival if there is sufficient unreserved capacity throughout the requested service interval.
\item If $k=M'$, accept with probability $1- \epsilon C(\lambda_{M'}\mu_{M'})^{-1}$ if there is sufficient unreserved capacity throughout the requested service interval. 
\item For $k=M'+1,\ldots,M$, reject.
\end{enumerate}
The ICSP is conceptually simple in that it splits the classes into profitable and nonprofitable that should be ignored. In fact, we can assume, without loss of generality, that there is no fractional variable in the optimal solution $\alpha^{*}$, i.e., for each $k=1,\ldots,M'$, $\alpha^{*}_{k}=1$. (If $\alpha^{*}_{M'}$ is fractional, we think of class $M'$ as having an arrival rate $\lambda'_{M'} = \alpha^{*}_{M'} \lambda_{M'}$ and then eliminate the fractional variable from $\alpha^{*}$.) 

Since the ICSP accepts the profitable classes $1$ to $M'$ and rejects the nonprofitable classes $M'+1$ to $M$, it induces a well-structured stochastic process called loss network systems with advanced reservation analyzed in Section \ref{sec_analysis}. Each class $k=1,\ldots, M$ induces a Poisson arrival stream with respective rate $\alpha_{k}^{*}\lambda_{k}$, $1\le k \le M$. Thus, for each class $k$ with $\alpha_{k}^{*}=1$, the arrival process is identical to the original process, each class $k$ with $\alpha_{k}^{*}=0$ can be ignored. This gives rise to system dynamics analogous to the loss network model with advanced reservation defined in Section \ref{sec_analysis}. The key aspect of performance analysis of the ICSP boils down to finding an upper bound on the blocking probabilities in these loss network models.

\begin{proof}[Proof of Theorem \ref{mmthm}.]
The expected long-run average revenue of the ICSP is $\sum_{k=1}^{M'}\sum_{d,s} r_{k}\lambda_{dsk}s(1-Q_{dsk})$, where $Q_{dsk}$ is the stationary probability of blocking a class-$k$ customers with reservation time $d$ and service time $s$. However, $\sum_{k=1}^{M'}\sum_{d,s} r_{k}\lambda_{dsk}s$ is the optimal value of the LP defined in (\ref{mlp}), which is an upper bound on $(1-\epsilon)\mathcal{R}(OPT)$ for small positive $\epsilon$ by Lemma \ref{ub}. Thus, a key aspect of the performance analysis of the ICSP is to obtain an upper bound on the probabilities $Q_{dsk}$'s. Specifically, if $1-Q_{dsk} \ge \xi$, for each $d$, $s$, and $k$, then
\begin{equation*}
\mathcal{R}(ICSP) = \sum_{k=1}^{M'}\sum_{d,s} r_{k}\lambda_{dsk}s(1-Q_{dsk}) \ge \sum_{k=1}^{M'}\sum_{d,s} r_{k}\lambda_{dsk}s \xi \ge \xi (1-\epsilon)\mathcal{R}(OPT).
\end{equation*}
By Theorem \ref{mainresult}(b), we know that $\xi \rightarrow 1$ as $\rho \rightarrow \infty$. This completes the proof. 
\end{proof}

\section{Pricing Extensions} \label{sec_pricing}
We present an interesting pricing extension, in which the arrival rates of the different classes of customers are affected by prices. Specifically, consider a two-stage decision. At the first stage, we set the respective prices $r_{1}, \ldots, r_{M}$ for each class. This determines the respective arrival rates $\lambda_{1}(r_{1}),\ldots,\lambda_{M}(r_{M})$. (The rate of class-$k$ customers is affected only by price $r_{k}$.) Then, given the arrival rates, we wish to find the optimal admission policy that maximizes the expected long-run revenue rate. In particular, we assume that $\lambda_{k}(r_{k})$ is nonnegative, differentiable, and decreasing in $r_{k}$ for each $1 \le k \le M$. In addition, we assume that all prices are nonnegative real numbers and that there exists a price $r_{\infty}$ such that, for each $i=k,\ldots, M$, we have $\lambda_{k}(r_{\infty})=0$. (The latter condition is required to guarantee that the problem has an optimal solution.)

For the model with price-driven demand we use the following nonlinear program (NLP1):
\vspace{-1.5mm}
\begin{eqnarray}
\label{nlp1}
\max_{\alpha_{dsk}, r_{k}} && 
\sum_{k=1}^{M} \sum_{d,s} r_{k} \alpha_{dsk} \lambda_{dsk}(r_{k})  s,  \\
\textrm{s.t.} && \sum_{k=1}^{M} \sum_{d,s} \alpha_{dsk} \lambda_{dsk}(r_{k}) s \le ( 1-\epsilon) C, \nonumber \\
&& 0 \le \alpha_{dsk} \le 1, \qquad \forall d,s,k, \nonumber \\
&& 0 \le r_{k} \le 1, \qquad \forall k. \nonumber
\end{eqnarray}
In particular, it can be verified that any optimal solution of (NLP1) has only nonnegative prices. Also, observe that for any fixed prices $r_{1}, \ldots, r_{M}$, the corresponding solution of $\{\alpha_{dsk}\}$ has the same knapsack structure defined in Section \ref{sec_rm} above. Let $(r^{*},\alpha^{*})= \{r_{k},\alpha_{dsk}\}$ be the corresponding optimal solution. Note that if one can solve (NLP1) and obtain the solution $(r^{*},\alpha^{*})$ then one can construct a similar ICSP that will be amenable to the same performance analysis discussed in Section 3 above. However, solving (NLP1) directly may be computationally hard. Next, we show that one can reduce (NLP1) to an equivalent nonlinear program that is more tractable; we denote it by (NLP2). (By equivalent we mean that they have the same set of optimal solutions.) Consider the following nonlinear program (NLP2):
\vspace{-1.5mm}
\begin{eqnarray}
\label{nlp2}
\max_{r_{k}} && 
\sum_{k=1}^{M} \sum_{i,j} r_{k}  \lambda_{dsk}(r_{k})  j, \\
\textrm{s.t.} && \sum_{k=1}^{M} \sum_{d,s}  \lambda_{dsk}(r_{k}) s \le ( 1-\epsilon) C, \nonumber \\
&& 0 \le r_{k} \le 1, \qquad \forall k. \nonumber
\end{eqnarray}
It can be readily verified that as long as $\lambda_{dsk}(r_{k})$ is nonnegative (and decreasing) it is always optimal to have nonnegative prices, so the nonnegativity constraints can be dropped.

\begin{theorem}
\label{equiv}
The programs (NLP1) and (NLP2) are equivalent.
\end{theorem}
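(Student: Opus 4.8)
The plan is to show that (NLP1) and (NLP2) have the same optimal value and that their optimal solutions correspond under the embedding $\alpha\equiv 1$, exploiting two structural facts already recorded in the text: for any fixed price vector, (NLP1) is a fractional knapsack in the variables $\alpha_{dsk}$, and each demand function $\lambda_k(\cdot)$ is continuous, decreasing, and vanishes at $r_\infty$, so that after a harmless normalization the price box may be taken to be $[0,r_\infty]^M$. Throughout I would use $\lambda_{dsk}(r_k)=\lambda_k(r_k)\,\mathbb{P}(D_k=d,S_k=s)$ and hence $\sum_{d,s}\lambda_{dsk}(r_k)s=\lambda_k(r_k)\mu_k$.

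The easy inclusion first: any $r$ feasible for (NLP2), paired with $\alpha_{dsk}\equiv 1$, is feasible for (NLP1) with the same objective value, so the optimal value of (NLP1) is at least that of (NLP2), and every optimal solution of (NLP2) lifts to a solution of (NLP1) of equal value. For the substantive direction, let $(r^{\ast},\alpha^{\ast})$ be optimal for (NLP1). For fixed prices, both the objective and the left-hand side of the capacity constraint depend on $\alpha$ only through the per-class totals $c_k\triangleq\sum_{d,s}\alpha^{\ast}_{dsk}\lambda_{dsk}(r_k^{\ast})s$ (they equal $\sum_k r_k^{\ast}c_k$ and $\sum_k c_k$ respectively); since $0\le c_k\le\lambda_k(r_k^{\ast})\mu_k$, I would replace $\alpha^{\ast}_{dsk}$ within each class by the uniform value $\alpha_k\triangleq c_k/(\lambda_k(r_k^{\ast})\mu_k)\in[0,1]$ (and $\alpha_k\triangleq 1$ when $\lambda_k(r_k^{\ast})\mu_k=0$) without changing either quantity, so one may assume $\alpha^{\ast}$ is uniform within each class.

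Next, for each class $k$ with $\alpha_k<1$ and $\lambda_k(r_k^{\ast})>0$, choose $\tilde r_k\in(r_k^{\ast},r_\infty]$ with $\lambda_k(\tilde r_k)=\alpha_k\lambda_k(r_k^{\ast})$; such a price exists by the intermediate value theorem, since the target lies in $[0,\lambda_k(r_k^{\ast})]$ and $\lambda_k$ runs continuously from $\lambda_k(r_k^{\ast})$ down to $0$ on $[r_k^{\ast},r_\infty]$, and it satisfies $\tilde r_k>r_k^{\ast}$ by monotonicity. Replacing $(r_k^{\ast},\alpha_k)$ by $(\tilde r_k,1)$ leaves the class-$k$ capacity usage $\lambda_k(\tilde r_k)\mu_k=\alpha_k\lambda_k(r_k^{\ast})\mu_k$ unchanged while multiplying the class-$k$ revenue by $\tilde r_k/r_k^{\ast}\ge 1$; carrying this out for every such class (and leaving $\alpha_k=1$ for classes with $\lambda_k(r_k^{\ast})=0$, whose revenue and capacity usage vanish regardless) produces a solution $(\tilde r,\mathbf 1)$ that is feasible for (NLP1), has objective at least that of $(r^{\ast},\alpha^{\ast})$ — hence is itself optimal — and is simultaneously feasible for (NLP2) with the same objective value. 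Thus the optimal value of (NLP2) is at least that of (NLP1), the two values coincide, and the construction sends any optimal solution of (NLP1) to an optimal solution with $\alpha\equiv 1$, i.e., to an optimal solution of (NLP2).

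The main obstacle I anticipate is not the knapsack uniformization or the monotone price substitution, which are routine once set up, but the price-domain endpoints: the intermediate value step needs the feasible price range to actually reach $r_\infty$, so I would either normalize $r_\infty$ to the stated upper bound on prices or simply phrase the theorem over the box $[0,r_\infty]^M$ — otherwise a class could have $\alpha_k<1$ with a target demand strictly below $\lambda_k$ evaluated at the largest admissible price, and no substituting price would exist. A secondary point needing explicit treatment is the precise reading of ``same set of optimal solutions'': because (NLP1) carries the extra $\alpha$-variables and admits the revenue-neutral within-class redistribution above, the clean statement is that the optimal solutions of (NLP2) are exactly the $r$-parts of the optimal solutions of (NLP1) with $\alpha\equiv 1$, and every optimal solution of (NLP1) is revenue-equivalent to one of that form; I would make this identification explicit at the outset of the proof.
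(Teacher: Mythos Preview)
Your proof is correct and follows essentially the same route as the paper: show that (NLP2) embeds into (NLP1) via $\alpha\equiv 1$, and conversely absorb the thinning $\alpha$ into a higher price using the intermediate value theorem. Two minor technical differences are worth noting. First, the paper invokes the knapsack structure of the optimal $\alpha^{\ast}$ (at most one fractional class $M'$) and only performs the price substitution for that single class, whereas you uniformize $\alpha$ within each class and substitute class by class; your version is slightly more self-contained since it does not rely on that structural lemma. Second, the paper applies the intermediate value theorem to the \emph{revenue} map $r\mapsto r\,\lambda_{M'}(r)\mu_{M'}$ to match revenue exactly and then argues capacity usage can only drop, while you apply it to the \emph{demand} map $r\mapsto\lambda_k(r)$ to match capacity exactly and then argue revenue can only rise; your choice avoids a division by $r_k^{\ast}$ that the paper's argument tacitly needs. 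Your caveats about the price box reaching $r_{\infty}$ and about the precise meaning of ``same set of optimal solutions'' are apt and apply equally to the paper's own proof.
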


\begin{proof}[Proof of Theorem \ref{equiv}.]
First, we show that for each solution $\{r_{k}\}$ of (NLP2), we can construct a solution of (NLP1) with the same objective value. Specifically, consider a solution $\{r'_{k}, \alpha'_{dsk}\}$ such that $r'_{k} = r_{k}$ and $\alpha'_{dsk}=1$ if and only if $\sum_{d,s}  \lambda_{dsk}(r_{k}) s >0$. It can be verified that the resulting solution is feasible for (NLP1) and has the same objective value.

Next, we show how to map optimal solution $\{r^{*}_{k}, \alpha^{*}_{dsk}\}$ of (NLP1) to a feasible solution of (NLP2) with the same objective function. For each $k=1,\ldots,M'-1$, set $r_{k}=r^{*}_{k}$, and for each $k=M'+1,\ldots,M$, set $r_{k}=r_{\infty}$. It is clear that, for each $k \ne M'$, the resulting contributions to the objective value and constraint in (NLP2) are the same as in (NLP1). Consider now the possible fractional value $\alpha^{*}_{M'}$ for class $M'$. The respective contribution of class $M'$ to the objective value is $\sum_{d,s} r^{*}_{M'} \alpha^{*}_{dsM'} \lambda_{ijM'}(r^{*}_{M'}) j$. Similarly, the contribution to constraint in (\ref{mlp}) is $\sum_{d,s} \alpha^{*}_{dsM'} \lambda_{dsM'}(r^{*}_{M'}) s$. Thus, it is sufficient to show that there exists a price $r_{M'}$ such that $\sum_{d,s} r_{M'} \lambda_{dsM'}(r_{M'}) s \ge \sum_{d,s} r^{*}_{M'} \alpha^{*}_{dsM'} \lambda_{dsM'}(r^{*}_{M'}) s$ and $\sum_{d,s} \lambda_{dsM'}(r_{M'}) s \le \sum_{d,s}  \alpha^{*}_{dsM'} \lambda_{dsM'}(r^{*}_{M'}) s$.

Since $\sum_{d,s} r^{*}_{M'} \lambda_{dsM'}(r^{*}_{M'}) s \ge \sum_{d,s} r^{*}_{M'} \alpha^{*}_{dsM'} \lambda_{dsM'}(r^{*}_{M'}) s$, by the properties of 
$\lambda_{dsM'}(r_{M'})$, we know that there exists $\bar{r} \in [r_{M'}, r_{\infty})$ such that $\sum_{d,s} \bar{r}  \lambda_{dsM'}(\bar{r}) s = \sum_{d,s} r^{*}_{M'} \alpha^{*}_{dsM'} \lambda_{dsM'}(r^{*}_{M'}) j$. Note that 
$\bar{r} \ge r^{*}_{M'}$, and therefore we obtain $\sum_{d,s} r^{*}_{M'} \lambda_{dsM'}(\bar{r}) s  \le \sum_{d,s} \bar{r}  \lambda_{dsM'}(\bar{r}) s = \sum_{d,s} r^{*}_{M'} \alpha^{*}_{dsM'} \lambda_{dsM'}(r^{*}_{M'}) s$. We conclude that 
$\sum_{d,s} \lambda_{dsM'}(\bar{r}) s  \le \sum_{d,s} \alpha^{*}_{dsM'} \lambda_{dsM'}(r^{*}_{M'}) s$, which completes the proof.  
\end{proof}

Lemma \ref{equiv} implies that we can solve (NLP2) instead of solving (NLP1). However, (NLP2) is computationally more tractable and can be solved relatively easy in many scenarios. Specifically, Lagrangify (dualize) the constraint in (NLP2) with some Lagrange multiplier $\Theta$ and consider the unconstraint problem $\max_{r_{k} \in [\Theta, r_{\infty})} \sum_{1\le k \le M} \sum_{d,s} (r_{k} -\Theta) \lambda_{dsk}(r_{k}) s$, which is separable in $r_{1}, \ldots, r_{M'}$. In fact, one aims to find the minimal $\Theta$ for which the resulting solution satisfies the constraint in (NLP2). This can be done by applying bi-section search on the interval $[0,p_{\infty}]$. The complexity of this procedure depends on the complexity of maximizing $\sum_{1\le k \le M} \sum_{d,s} (r_{k} -\Theta) \lambda_{dsk}(r_{k}) s$ for each $1\le k \le M$. It is not hard to check that there are at least two tractable cases: (i) $\lambda_{dsk}(r_{k})$ is a concave function on $[0,r_{\infty})$, for each $1\le k \le M$; (ii) $\lambda_{dsk}(r_{k})$ is convex, but $r_{k}\lambda_{dsk}(r_{k})$ is concave function on $[0,r_{\infty})$, for each $1\le k \le M$.

\section{Numerical Experiments} \label{sec_NE}
In this section, we examine the empirical performance of the ICSP in comparison with the optimal solutions via dynamic programming. It should be noted that the dynamic program is computationally expensive and we have to restrict ourselves to 3 classes of customer arrivals. In contrast, solving the ICSP solutions is extremely efficient and the results show that the performance gap is very small under a large set of instances. The numerical studies also suggest that the ICSP performs close to optimal in light and medium traffic as well. For all tested instances, the average CPU time per test instance on a Pentium 2.70GHz PC is 4.5s, whereas the dynamic programming algorithm is extremely slow per test instance.
 
\paragraph{Dynamic Programming Formulation.}
We describe the dynamic program used to compute the optimal policies. Incoming requests arrive over a time horizon of $T$ discrete periods. The state variable is the remaining capacity for each period $\mathbf c=(c_1,c_2,\ldots,c_T)^T$. Initially, $c=(c_0,c_0,\ldots,c_0)^T$ where $c_{0}$ is the maximum capacity of the homogeneous resource pool. The reward for different classes of customers $\mathbf r=(r_1,r_2,\ldots,r_M)^T$ where $M$ is number of classes. Their arriving probability is $\mathbf \lambda=(\lambda_1,\lambda_2,\ldots,\lambda_M)^T$ according to Poisson processes. The booking slot or the advanced reserved slot is defined as a Kronecker delta vector $\mathbf w=\delta_{ab}$, that is, $w(i)=1, \forall i \in [a,b]$ and $w(i)=0$ otherwise. The vector $\mathbf w$ represents the arriving request by flagging each of its requested service period $1$ between the starting service time $a$ and ending service time $b$.

The value function is defined as follow. At the beginning of time t, with remaining capacity $\mathbf c$, the optimal expected cumulative reward one could get is $\mathbb{E}V_t^c=\sup_{\pi\in\Pi}\{\mathbb{E}(V_t^c|\pi)\}$. It turns out that the optimal decision follows a threshold rule:
\begin{equation}
 \pi^*(t,\mathbf c,\mathbf w, \mathbf r)=\begin{cases}
\mbox{accept, if }\mathbf r \mathbf e^T \mathbf w+ \mathbb{E}V_{t+1}^{c-w}> \mathbb{E}V_{t+1}^c \mbox{ and } \mathbf w \le \mathbf c, \\
\mbox{reject, if }\mathbf r \mathbf e^T \mathbf w+\mathbb{E}V_{t+1}^{c-w}\le \mathbb{E} V_{t+1}^c \mbox{ or }\mathbf w\le \mathbf c \mbox{ doesn't hold},\\
  \end{cases}\nonumber
\end{equation}
where $\mathbf e$ is a column vector of all ones, and $\mathbf w \le \mathbf c$ component-wise means that the remaining capacities can serve the incoming request. The bellman update is then
\begin{eqnarray*}
\mathbb{E}V_t^c &=& \mathbb{P}(\mathbf r \mathbf e^T \mathbf w+ \mathbb{E}V_{t+1}^{c-w}>\mathbb{E}V_{t+1}^c, \mathbf w\le \mathbf c)\cdot \mathbb{E}(\mathbf r \mathbf e^T \mathbf w
+ \mathbb{E}V_{t+1}^{c-w} \mid \mathbf w\le \mathbf c, \mathbf r \mathbf e^T \mathbf w + \mathbb{E}V_{t+1}^{c-w}> \mathbb{E}V_{t+1}^c) \\
&& + (1-\mathbb{P}(\mathbf r \mathbf e^T \mathbf w+\mathbb{E}V_{t+1}^{c-w}>\mathbb{E}V_{t+1}^c, \mathbf w\le \mathbf c)) \cdot \mathbb{E}V_{t+1}^c,
\end{eqnarray*}
with boundary condition $\mathbb{E}V_t^c=0,\forall \mathbf c$ and $t>T$. No arrival is considered rejected arbitrarily. We then define critical reward:
\begin{equation}
 R_t^c(w)=\begin{cases}
\mathbb{E}V_{t+1}^c-\mathbb{E}V_{t+1}^{c-w}, \mathbf w\le \mathbf c \\
\infty, \mbox{otherwise}\\
  \end{cases}\nonumber
\end{equation}
Notice $ R_t^c(w)$ is a nondecreasing function of $\mathbf w$ for all $t$ and $\mathbf c$. The optimal decision rule can re-written as follows,
\begin{equation}
\pi^*(t,c,w,r)=\begin{cases}
\mbox{accept, if }\mathbf r \mathbf e^T \mathbf w>R_t^c(w),\\
\mbox{reject, if }\mathbf r \mathbf e^T \mathbf w\le R_t^c(w).\\
  \end{cases}\nonumber
\end{equation}
The resulting dynamic program suffers from the curse of dimensionality due to the multidimensional state vector. 

\paragraph{Numerical Results.} 
We consider three classes of customers. The default parameters (the base case) are as follows,
\begin{eqnarray*}
&&\lambda= 30,\qquad (\lambda_{1}, \lambda_{2}, \lambda_{3}) = (0.1\lambda, 0.45\lambda, 0.45\lambda) \\
&& \mathbf{r} = (15, 10, 8), \qquad C = 60, \qquad T=120.
\end{eqnarray*}
The advanced reservation and service distributions are assumed to be discretized exponential or normal distributions with different coefficient of variations ($c.v. = 0.1, 0.25, 0.5$). We benchmark the empirical performance of ICSP (with $\epsilon = 0.001$)  against the optimal dynamic programming solution. The extensive numerical results show that the ICSP performs within a few percentages of the optimal revenue. We define the performance measure of the ICSP as
$$
\text{err} = \left| \frac{\mathcal{R}(ICSP) - \mathcal{R}(OPT)}{\mathcal{R}(OPT)} \right|
$$ 
Table \ref{change_service} shows the numerical results with sensitivities on the mean advanced reservation times $\nu$ or the mean service times $\mu$. Table \ref{change_capacity} shows the numerical results where the total capacity is scaled or unscaled with the arrival rate. It should be noted that the performance gap shrinks when the capacity $C$ increases, which is consistent with our analytical analysis. Finally, Table \ref{change_reward_prob} shows the numerical results with sensitivities on the reward vector or the arrival probabilities. The results indicate that the performance gap is robust with respect to these input parameters. The typical performance of the ICSP is within $7\%$ of optimal ((average error of less than 4\%)  in all test instances.

\begin{table}[htbp]
  \centering
    \begin{tabular}{cccc|cccc}
    \hline \hline
    $\nu$    & DP    & err   & Accept  & $\mu$    & DP    & err   & Accept  \\
    
          & ($10^6$) &       & Prob. &       & ($10^6$) &       & Prob. \\ \hline
    \multicolumn{4}{c|}{$\mu= 8$ } &  \multicolumn{4}{c}{$\nu = 8$ }   \\ \hline
    2     & 6.99  & 1.78\% & (1, 0.44, 0) & 2     & 5.51  & 1.02\% & (1, 1, 0.56) \\
    4     & 6.79  & 1.26\% & (1, 0.44, 0) & 4     & 5.63  & 2.60\% & (1, 0.91, 0) \\
    6     & 6.59  & 2.37\% & (1, 0.44, 0) & 6     & 6.13  & 3.11\% & (1, 0.58, 0) \\
    8     & 6.35  & 2.81\% & (1, 0.44, 0) & 8     & 6.35  & 2.81\% & (1, 0.44, 0) \\
    10    & 6.21  & 3.73\% & (1, 0.44, 0) & 10    & 6.67  & 3.10\% & (1, 0.36, 0) \\
    12    & 6.19  & 4.89\% & (1, 0.44, 0) & 12    & 6.79  & 3.87\% & (1, 0.32, 0) \\
    \hline \hline
    \end{tabular}%
		\caption{Performance of ICSP: changing the mean advanced reservation times $\nu$ or the mean service times $\mu$.}
		\label{change_service}
\end{table}%

\begin{table}[htbp]
  \centering
    \begin{tabular}{cccc|cccc}
    \hline \hline
    C     & DP    & err   & Accept  & C     & DP    & err   & Accept  \\
          & ($10^6$) &       & Prob. &       & ($10^6$) &       & Prob. \\ \hline
    \multicolumn{4}{c|}{$\lambda = 30$} & \multicolumn{4}{c}{$C=\rho^{*}$}   \\ \hline
    30    & 3.66  & 6.51\% & (1, 0.11, 0) & 22    & 2.16  & 6.43\% & (1, 0.61, 0) \\
    40    & 4.51  & 6.99\% & (1, 0.22, 0) & 44    & 4.53  & 3.76\% & (1, 0.61, 0) \\
    50    & 5.45  & 3.54\% & (1, 0.33, 0) & 66    & 6.91  & 2.27\% & (1, 0.61, 0) \\
    60    & 6.35  & 2.83\% & (1, 0.44, 0) & 88    & 9.29  & 2.38\% & (1, 0.61, 0) \\
    70    & 7.39  & 2.42\% & (1, 0.55, 0) & 110   & 11.8  & 1.20\% & (1, 0.61, 0) \\
    80    & 8.41  & 1.27\% & (1, 0.66, 0) & 132   & 14.1  & 0.38\% & (1, 0.61, 0) \\
    \hline \hline
    \end{tabular}%
 	\caption{Performance of ICSP: changing the total capacity $C$ (unscaled and scaled).}
	\label{change_capacity}
\end{table}%

\begin{table}[htbp]
  \centering
    \begin{tabular}{cccc|cccc}
    \hline \hline
    $\mathbf{r}$     & DP    & err   & Accept  & Arrival & DP    & err   & Accept  \\
          &  ($10^6$) &       & Prob. & Prob. &  ($10^6$) &       & Prob. \\ \hline
    \multicolumn{4}{c|}{Arrival Prob. = (0.1, 0.45, 0.45)} & \multicolumn{4}{c}{$\mathbf{r}$ = (15, 10 , 8)} \\ \hline
    (20, 10, 1) & 3.66  & 1.80\% & (1, 0.11, 0) & (0.1, 0.3, 0.6) & 6.42  & 1.97\% & (1, 0.66, 0) \\
    (20, 10, 8) & 4.51  & 1.86\% & (1, 0.22, 0) & (0.1, 0.45, 0.45) & 6.35  & 2.81\% & (1, 0.44, 0) \\
    (80, 10, 5) & 5.45  & 1.59\% & (1, 0.33, 0) & (0.05, 0.87, 0.08) & 6.04  & 3.60\% & (1, 0.28, 0) \\
    (100, 10, 5) & 6.35  & 2.35\% & (1, 0.44, 0) & (0.1, 0.82, 0.08) & 6.51  & 3.77\% & (1, 0.24, 0) \\
    (150, 10, 5) & 7.39  & 0.63\% & (1, 0.55, 0) & (0.3, 0.62, 0.08) & 8.27  & 2.56\% & (1, 0.00, 0) \\
    \hline \hline
    \end{tabular}%
  	\caption{Performance of ICSP: changing the reward vector $\mathbf{r}$ or the arrival probabilities.}
	\label{change_reward_prob}
\end{table}%

\section{Conclusion} \label{sec_con}
To close this paper, we would like to point out three potential research directions. (a) One may consider a dynamic pricing model and derive similar policies. Assume that there is a single-class time-homogenous Poisson arrival process with rate $\lambda$. Each customer's reservation and service-time are drawn from $D$ and $S$, respectively. The system offers a price from a fixed price menu $[r_{1},\ldots, r_{n}]$ to an arriving customer with $d$ and $s$, depending on the current state. The state is characterized by the booking profile, $d$, and $s$. Moreover, one can introduce a reservation price distribution denoted by $R$. The customer only accepts the offer if the price offered falls below the reservation price. (b) \cite{GH13} considered a competition network model of perishable resources. One could also study a counterpart model of reusable resources. Each firm has a fixed capacity of reusable resources and competes in setting prices to sell them. Assuming deterministic customer arrival rates, one can potentially show that any equilibrium strategy has a simple structure, and then show that there exists a similar asymptotic equilibrium strategy in a stochastic version where the arrival rates and the capacity are scaled together to infinity. (c) To capture seasonality of demands, one could also consider the model studied in this paper with non-homogeneous Poisson arrivals. However, this may require entirely different methodologies.

\section*{Acknowledgment}
The research of Retsef Levi was partially supported by NSF grants DMS-0732175 and CMMI-0846554 (CAREER Award), AFOSR awards FA9950-11-1-0150 and FA9550-08-1-0369, an SMA grant and the Buschbaum Research Fund of MIT. The research of Cong Shi is partially supported by NSF grants CMMI-1362619 and CMMI-1451078. The authors would like to thank Ana Radovanovic (Google) and Yuan Zhong (Columbia) for their valuable comments and suggestions. The authors thank Sarah Liu for her help in the extensive numerical studies.

\bibliographystyle{ormsv080}
\bibliography{references}

\begin{thebibliography}{30}
\expandafter\ifx\csname natexlab\endcsname\relax\def\natexlab#1{#1}\fi
\expandafter\ifx\csname url\endcsname\relax
  \def\url#1{{\tt #1}}\fi
\expandafter\ifx\csname urlprefix\endcsname\relax\def\urlprefix{URL }\fi
\expandafter\ifx\csname urlstyle\endcsname\relax
  \expandafter\ifx\csname doi\endcsname\relax
  \def\doi#1{doi:\discretionary{}{}{}#1}\fi \else
  \expandafter\ifx\csname doi\endcsname\relax
  \def\doi{doi:\discretionary{}{}{}\begingroup \urlstyle{rm}\Url}\fi \fi

\bibitem[{Adelman(2006)}]{adelman2}
Adelman, D. 2006.
\newblock A simple algebraic approximation to the {E}rlang loss system.
\newblock {\it Operations Research Letters\/} {\bf 36}(4) 484--491.

\bibitem[{Adelman(2007)}]{adelman}
Adelman, D. 2007.
\newblock Price-directed control of a closed logistics queuing network.
\newblock {\it Operations Research\/} {\bf 55}(6) 1022--1038.

\bibitem[{Boxma(1984)}]{boxma}
Boxma, O.~J. 1984.
\newblock ${M}/{G}/\infty$ tandem queues.
\newblock {\it Stochastic Processes and their Applications\/} {\bf 18}
  153--164.

\bibitem[{Burman et~al.(1984)Burman, Lehoczky, and Lim}]{burman}
Burman, D.~Y., J.~P. Lehoczky, Y.~Lim. 1984.
\newblock Insensitivity of blocking probabilities in a circuit-switching
  network.
\newblock {\it J. Appl. Probab.\/} {\bf 21}(4) 850--859.

\bibitem[{Coffman-Jr et~al.(1999)Coffman-Jr, Jelenkovic, and Poonen}]{coffman}
Coffman-Jr, E.~G., P.~Jelenkovic, B.~Poonen. 1999.
\newblock Reservation probabilities.
\newblock {\it Adv. Perf. Anal.\/} {\bf 2} 129--158.

\bibitem[{Eick et~al.(1993{\natexlab{a}})Eick, Massey, and Whitt}]{eick2}
Eick, S.~G., W.~A. Massey, W.~Whitt. 1993{\natexlab{a}}.
\newblock Infinite-server approximations for multi-server loss models with
  time-dependent arrival rates.
\newblock {\it Working Paper\/} .

\bibitem[{Eick et~al.(1993{\natexlab{b}})Eick, Massey, and Whitt}]{eick}
Eick, S.~G., W.~A. Massey, W.~Whitt. 1993{\natexlab{b}}.
\newblock The physics of the ${M}_{t}/{G}/\infty$ queue.
\newblock {\it Operations Research\/} {\bf 41}(4) 731--742.

\bibitem[{Erlang(1917)}]{erlang}
Erlang, A.~K. 1917.
\newblock Solution of some problems in the theory of probabilities of
  significance in automatic telephone exchanges.
\newblock {\it Elektrotkeknikeren\/} {\bf 13} 5--13.

\bibitem[{Fan-Orzechowski and Feinberg(2006)}]{fan}
Fan-Orzechowski, X., E.~A. Feinberg. 2006.
\newblock Optimality of randomized trunk reservation for a problem with a
  single constraint.
\newblock {\it Adv. Appl. Probab.\/} {\bf 38}(1) 199--220.

\bibitem[{Gallager(1996)}]{gallager}
Gallager, R.~G. 1996.
\newblock {\it Discrete Stochastic Processes\/}.
\newblock Kluwer.

\bibitem[{Gallego and Hu(2014)}]{GH13}
Gallego, G., M.~Hu. 2014.
\newblock Dynamic pricing of perishable assets under competition.
\newblock {\it Management Science\/} {\bf 60}(5) 1241--1259.

\bibitem[{Hunt and Laws(1997)}]{hunt}
Hunt, P.~J., C.~N. Laws. 1997.
\newblock Optimization via trunk reservation in single resource loss systems
  under heavy traffic.
\newblock {\it Ann. Appl. Probab.\/} {\bf 7}(4) 1058--1079.

\bibitem[{Iyengar and Sigman(2004)}]{iye}
Iyengar, G., K.~Sigman. 2004.
\newblock Exponential penalty function control of loss networks.
\newblock {\it Ann. Appl. Probab.\/} {\bf 14}(4) 1698--1740.

\bibitem[{Kaufman(1981)}]{kaufman}
Kaufman, J.~S. 1981.
\newblock Blocking in a shared resources environment.
\newblock {\it IEEE Trans. Comm.\/} {\bf 29} 1474--1481.

\bibitem[{Kelly(1991)}]{kelly}
Kelly, F.~P. 1991.
\newblock Effective bandwidths at multi-class queues.
\newblock {\it Queueing Systems\/} {\bf 9}(1-2) 5--16.

\bibitem[{Key(1990)}]{key}
Key, P. 1990.
\newblock Optimal control and trunk reservation in loss networks.
\newblock {\it Probab. Engrg. Informs. Sci.\/} {\bf 4} 203--242.

\bibitem[{Kumar et~al.(1998)Kumar, Srikant, and Kumar}]{kumar}
Kumar, S., R.~Srikant, P.~R. Kumar. 1998.
\newblock Bounding blocking probabilities and throughput in queueing networks
  with buffer capacity constraints.
\newblock {\it Queueing Systems\/} {\bf 28}(1-3) 55--77.

\bibitem[{Lawler(2006)}]{lawler}
Lawler, G.~F. 2006.
\newblock {\it Introduction to Stochastic Processes\/}.
\newblock Chapman \& Hall.

\bibitem[{Levi and Radovanovic(2010)}]{levi}
Levi, R., A.~Radovanovic. 2010.
\newblock Technical note: Provably near-optimal {LP}-based policies for revenue
  management in systems with reusable resources.
\newblock {\it Operations Research\/} {\bf 58}(2) 503--507.

\bibitem[{Louth et~al.(1994)Louth, Mitzenmacher, and Kelly}]{louth}
Louth, G., M.~Mitzenmacher, F.~Kelly. 1994.
\newblock Bounding blocking probabilities and throughput in queueing networks
  with buffer capacity constraints.
\newblock {\it Theoret. Comput. Sci.\/} {\bf 125} 45--59.

\bibitem[{Lu and Radovanovic(2007)}]{lura}
Lu, Y., A.~Radovanovic. 2007.
\newblock Asymptotic blocking probabilities in loss networks with
  subexponential demands.
\newblock {\it J. Appl. Probab.\/} {\bf 44}(4) 1088--1102.

\bibitem[{Massey(1985)}]{massey}
Massey, W.~A. 1985.
\newblock Asymptotic analysis of the time dependent ${M}/{M}/1$ queue.
\newblock {\it Mathematics of Operations Research\/} {\bf 10}(2) 305--327.

\bibitem[{Miller(1969)}]{miller}
Miller, B. 1969.
\newblock A queueing reward system with several customer classes.
\newblock {\it Management Science\/} {\bf 16}(3) 234--245.

\bibitem[{Puhalskii and Reiman(1998)}]{puhal}
Puhalskii, A.~A, M.~I. Reiman. 1998.
\newblock A critically loaded multirate link with trunk reservation.
\newblock {\it Queueing Systems\/} {\bf 28}(1-3) 157--190.

\bibitem[{Ross and Tsang(1989)}]{ross}
Ross, K., D.~Tsang. 1989.
\newblock The stochastic knapsack problem.
\newblock {\it Management Science\/} {\bf 37}(7) 740--747.

\bibitem[{Ross and Yao(1990)}]{ross2}
Ross, K., D.~Yao. 1990.
\newblock Monotonicity properties for the stochastic knapsack.
\newblock {\it IEEE Trans. Inform. Theory\/} {\bf 36}(5) 1173--1179.

\bibitem[{Schmidt(1987)}]{schmidt}
Schmidt, V. 1987.
\newblock On joint queue-length characteristics in infinite-server tandem
  queues with heavy traffic.
\newblock {\it Adv. Appl. Prob.\/} {\bf 19} 474--486.

\bibitem[{Sevastyanov(1957)}]{sevastyanov}
Sevastyanov, B.~A. 1957.
\newblock An ergodic theorem for markov processes and its application to
  telephone systems with refusals.
\newblock {\it Theory of Probability and its Applications\/} {\bf 2}(1)
  104--112.

\bibitem[{Whitt(1985)}]{whitt}
Whitt, W. 1985.
\newblock Blocking when service is required from several facilities
  simultaneously.
\newblock {\it AT\&T Tech. J.\/} {\bf 64} 1807--1856.

\bibitem[{Zachary(1991)}]{zach}
Zachary, S. 1991.
\newblock On blocking in loss networks.
\newblock {\it Adv. Appl. Probab.\/} {\bf 23}(2) 355--372.

\end{thebibliography}

\section*{Appendix}
\subsection*{Proof of Ergodicity}
In this section, we prove the existence and uniqueness of stationary distribution for the Markov chain induced by the improved class selection policy (ICSP). Let requests for resources from a common resource pool of capacity $C\le \infty$ arrive at time points $\{\tau_{n}, -\infty < n < \infty \}$. By observing the system at the moments of request arrivals, we define a discrete time process $I_{n} \triangleq (N_{n}^{(C)}, L_{i}, D_{i}, S_{i}, i=1,2,\ldots N_{n}^{(C)})$ where $N_{n}^{(C)}$ is the number of active (reserved) requests in the system at the moment of $n$th arrival $\tau_{n}$, $L_{i}$ is the elapsed time from the arrival of the $i^{th}$ request to $\tau_{n}$, $D_{i}$ and $S_{i}$ represent the reservation time (between arrival and actual service) and service time of the $i^{th}$ request, respectively. Note that $L_{i} \le D_{i} + S_{i}$ for $i=1,2,\ldots N_{n}^{(C)}$. The discrete-time Markov chain $I_{n}$ describes the entire booking profile at the moment of $n$th arrival $\tau_{n}$. We use a discrete version of Theorem 1 in \cite{sevastyanov} to prove the existence of a unique stationary distribution for $\{I_{n}\}$, which we state next for completeness.

\begin{theorem}
\label{appthm}
A Markov chain homogeneous in time has a unique stationary distribution which is ergodic if, for any $\epsilon >0$, there exists a 
measurable set $S$, a probability distribution $R$ on $\Omega$, and $n_{1}>0$, $k>0$, $K>0$ such that
\begin{itemize}
\item $kR(A) < P_{n_{1}}(x,A)$ for all points $x \in H$ and measurable sets $A \subset H$; for any initial distribution $P_{0}$ there exists $n_{0}$ such that for any $n \ge n_{0}$,
\item $P_{n}(H) \ge 1- \epsilon$,
\item $P_{n}(A) \le KR(A) + \epsilon$ for all measurable sets $A \subset H$.
\end{itemize}
\end{theorem}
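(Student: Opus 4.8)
The plan is to run the classical Doeblin-type (minorization/coupling) argument, taking care that here the minorization is available only on the set $H$ and only after finitely many steps. First I would fix $\epsilon>0$, invoke the second bullet to pick $n_{0}$ so that $P_{n}(H)\ge 1-\epsilon$ for every $n\ge n_{0}$ and every initial law, and then restrict attention to the $n_{1}$-step kernel iterated starting from $n_{0}$; this reduces the problem to comparing blocks of length $n_{1}$ whose starting points lie in $H$ with probability at least $1-\epsilon$.

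The heart of the argument is the first bullet, the minorization $P_{n_{1}}(x,A)>kR(A)$ for $x\in H$. I would use it to split the kernel: for $x\in H$ write $P_{n_{1}}(x,\cdot)=kR(\cdot)+(1-k)Q(x,\cdot)$ with $Q$ a Markov kernel. Coupling two copies of the chain from two arbitrary initial distributions, at each block of $n_{1}$ steps both copies are in $H$ with probability at least $1-\epsilon$, and conditionally on that they each ``regenerate'' from the common measure $R$ with probability at least $k$; once they have regenerated simultaneously they can be made to coincide forever. Hence after $m$ blocks the total-variation distance between the two marginals is at most (roughly) $(1-k+c\epsilon)^{m}$ plus an accumulated error of order $\epsilon$. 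The role of the third bullet, $P_{n}(A)\le KR(A)+\epsilon$, is precisely to keep these error terms comparable to $R$ so they cannot compound uncontrollably: the stray mass sitting outside $H$ (or not yet regenerated) is itself dominated, up to $\epsilon$, by $KR$, so it is reabsorbed at the next regeneration step rather than growing.

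From this contraction I would conclude that $\{P_{n}(x,\cdot)\}_{n}$ is Cauchy in total variation uniformly in the starting point, so it converges to a limit $\pi$ that does not depend on $x$; passing to the limit in $P_{n+1}=P_{n}P$ shows $\pi$ is stationary, the uniform convergence forces uniqueness, and the same coupling shows the invariant $\sigma$-field is trivial, i.e.\ $\pi$ is ergodic. The overall structure mirrors Sevastyanov's continuous-time proof; in our discrete setting the regeneration times are simply the block endpoints $n_{0}+jn_{1}$.

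I expect the main obstacle to be the bookkeeping of the $\epsilon$-errors and the order of quantifiers: because the minorization holds only on $H$ and only for $n\ge n_{0}$ (with $n_{0}$ depending on the initial law), one cannot cite the textbook Doeblin theorem verbatim, and the three conditions must be combined in the right sequence — first choose $\epsilon$ small relative to $k$, then $n_{0}$, then the number of blocks — so that the geometric gain $k$ from the first bullet dominates the additive $\epsilon$ terms from the second and third. Once that accounting is set up correctly, the remainder is the standard splitting/coupling machinery and is routine.
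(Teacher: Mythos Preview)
The paper does not actually prove Theorem~\ref{appthm}. It is quoted verbatim as ``a discrete version of Theorem~1 in \cite{sevastyanov}'' and stated ``for completeness''; the paper then \emph{applies} the theorem in the subsequent ``Proof of Ergodicity'' to verify the three bullet conditions for the particular booking-profile chain $\{I_n\}$. So there is no in-paper proof to compare your proposal against.

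That said, your sketch is the standard Doeblin/minorization route and is the natural way to establish a result of this type; it is presumably close in spirit to Sevastyanov's original argument that the paper is citing. Your identification of the main technical hazard --- that the minorization holds only on $H$ and only from some $n_0$ depending on the initial law, so the $\epsilon$-leakage must be controlled before the geometric contraction from $k$ can be invoked --- is exactly right, and the third bullet is indeed what prevents that leakage from compounding. If your goal were to supply a self-contained proof, the sketch is sound as a plan; but for the purposes of this paper no proof is expected, only the verification of the hypotheses, which is what the Appendix carries out.
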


\begin{proof}[Proof of Ergodicity.]
The proof follows similar arguments as in \cite{lura} and \cite{sevastyanov}. Define set $H(a,b,c,d)$ as 
\begin{equation}
H(a,b,c,d) \triangleq \left\{ N_{n}^{(C)} \le a, 0 \le L_{i} \le b, 0 \le D_{i} \le c, 0 \le S_{i} \le d \right\},
\end{equation}
for some positive finite constants $a,b,c,d$. Now we show that for any $\epsilon >0$, there exists $H(a,b,c,d) \in \Omega$, such that for any initial distribution $P_{0}$ there exists $n_{0}$ such that for all $n \ge n_{0}$,
\begin{equation}
P_{n}(H(a,b,c,d)) \ge 1- \epsilon.
\end{equation}
Note that 
\begin{eqnarray}
P_{n}(\bar{H}(a,b,c,d)) &\le&  \mathbb{P}\left[ N_{0,n}^{(C)} \ge a \right] 
+ \mathbb{P}\left[\bigcup_{i\in N_{0,n}^{(C)}} \left\{ L_{i} > b
\right\}, N_{0,n}^{(C)} \le a \right]\\
&& + \mathbb{P}\left[\bigcup_{i\in N_{0,n}^{(C)}} \left\{ D_{i} > c
\right\}, N_{0,n}^{(C)} \le a \right] +
\mathbb{P}\left[\bigcup_{i\in N_{0,n}^{(C)}} \left\{ S_{i} > d
\right\}, N_{0,n}^{(C)} \le a \right] \nonumber \\
&\le& \mathbb{P} \left[N_{a,n}^{(C)} + N_{0,n}^{0} > a \right] 
+ a \mathbb{P}\left[ L_{i} > b \right] + a \mathbb{P}\left[ D_{i} > c \right] + a \mathbb{P}\left[ S_{i} > d \right], \nonumber
\end{eqnarray}
where $N_{a,n}^{(C)}$ represents the number of active requests at $\tau_{n}$ that originated from $n$ arrivals at $\tau_{0}, \ldots, \tau_{n-1}$, and the rest of active requests at $\tau_{n}$, $N_{0,n}^{0}= N_{0,n}^{(C)}-N_{a,n}^{(C)}$ are those that were active at the initial point $\tau_{0}$ and are still active in the system at the moment of $n$th arrival. Next, since
\begin{eqnarray}
\label{app1}
&& \mathbb{P}\left[N_{a,n}^{(C)} + N_{0,n}^{0} > a \right] \\
&\le& \mathbb{P}\left[N_{a,n}^{(C)} > \frac{a}{2} \right] + 
\mathbb{P}\left[ N_{0,n}^{0} > \frac{a}{2} \right] 
\le \mathbb{P}\left[N_{n}^{(\infty)} > \frac{a}{2} \right] + 
\mathbb{P}\left[ \sum_{i=1}^{N^{0}_{0,n}} 
\mathds{1}\left[D_{i}^{0} + S_{i}^{0} > \tau_{n}-\tau_{0}\right] > \frac{\psi}{2} \right] \nonumber \\
&\le& \mathbb{P}\left[N_{n}^{(\infty)} > \frac{a}{2} \right] + \sum_{m=0}^{\infty}\mathbb{P}\left[N^{0}_{0,n} =m \right] 
\mathbb{P}\left[ \sum_{i=1}^{m} \mathds{1}\left[D_{i}^{0} + S_{i}^{0} > (1-\epsilon_{1})n\mathbb{E}(\tau_{1}-\tau_{0})
\right] > \frac{a}{2} \right] \nonumber \\
&& +\mathbb{P}\left[  \tau_{n}-\tau_{0} <(1-\epsilon_{1})n\mathbb{E}(\tau_{1}-\tau_{0})) \right], \nonumber 
\end{eqnarray}
where $0 <\epsilon_{1}<1$ is an arbitrary constant and we used $N_{n}^{(\infty)} \ge N_{a,n}^{(C)}$ a.s. where $N_{n}^{(\infty)}$ is the active requests under infinite capacity system.

Next we prove that there exists $a = a_{0}$ large enough such that (\ref{app1}) is bounded by $\epsilon/4$. By virtue of Little's Law, we know that $\mathbb{E} N_{n}^{(\infty)} < \infty$ and therefore, uniformly for all $n>0$, 
\begin{equation}
\label{app2}
\lim_{a \rightarrow \infty} \mathbb{P}\left[N^{(\infty)}_{n} > \frac{a}{2} \right] \rightarrow 0.
\end{equation}
Next, note that $\mathds{1}\left[D_{i}^{0} + S_{i}^{0} > (1-\epsilon_{1})n\mathbb{E}(\tau_{1}-\tau_{0})\right]
\le \mathds{1}\left[D_{i}^{0} + S_{i}^{0} > (1-\epsilon_{1})\mathbb{E}(\tau_{1}-\tau_{0})\right]$ a.s., and that for any fixed $m$,
\begin{eqnarray}
\label{app3}
&&\mathbb{P}\left[\sum_{i}^{m}\mathds{1}\left[D_{i}^{0} + S_{i}^{0} > (1-\epsilon_{1})n\mathbb{E}(\tau_{1}-\tau_{0})\right] > \frac{a}{2}
\right] \\
&\le& \mathbb{P}\left[\sum_{i}^{m}\mathds{1}\left[D_{i}^{0} + S_{i}^{0} > (1-\epsilon_{1})\mathbb{E}(\tau_{1}-\tau_{0})\right] > \frac{a}{2}
\right] \downarrow 0 \qquad \text{as} \qquad a \rightarrow \infty, \nonumber 
\end{eqnarray}
which by the monotone convergence theorem implies that, uniformly for all $n>0$,
\begin{eqnarray}
\lim_{a \rightarrow \infty} \sum_{m=0}^{\infty}\mathbb{P}\left[N^{0}_{0,n} = m \right] \mathbb{P}\left[ \sum_{i=1}^{m} \mathds{1}\left[D_{i}^{0} + S_{i}^{0} > (1-\epsilon_{1})n\mathbb{E}(\tau_{1}-\tau_{0}) \right] > \frac{a}{2} \right] =0.
\end{eqnarray}
Finally, by the Weak Law of Large Numbers, for all $n$ large enough,
\begin{equation}
\label{app4}
\mathbb{P}\left[  \tau_{n}-\tau_{0} <(1-\epsilon_{1})n\mathbb{E}(\tau_{1}-\tau_{0})) \right] \le \epsilon / 12.
\end{equation}
Thus, by (\ref{app2}) and (\ref{app3}), for an arbitrary $0 <\epsilon < 1$, there exists $n_{0} < \infty$ and $a_{0} < \infty$ large enough such that for all $n\ge n_{0}$, 
\begin{equation}
\label{app5}
\mathbb{P}\left[N^{(\infty)}_{n} > \frac{a_{0}}{2} \right] \le \epsilon / 12,
\end{equation}
and
\begin{equation}
\label{app6}
\sum_{m=0}^{\infty}\mathbb{P}\left[N^{0}_{0,n} = m \right] \mathbb{P}\left[ \sum_{i=1}^{m} \mathds{1}\left[S_{i}^{0} + D_{i}^{0} > (1-\epsilon_{1})n\mathbb{E}(\tau_{1}-\tau_{0}) \right] > \frac{a_{0}}{2} \right] \le \epsilon / 12.
\end{equation}
Now since $\mathbb{E}L_{i} < \infty$, $\mathbb{E}D_{i} < \infty$ and $\mathbb{E}S_{i} < \infty$, 
there exists $b_{0}$, $c_{0}$ and $d_{0}$ such that 
\begin{equation}
\label{app7}
\mathbb{P}\left[L_{i} > b_{0} \right] \le \frac{\epsilon}{4 a_{0}}, \qquad
\mathbb{P}\left[D_{i} > c_{0} \right] \le \frac{\epsilon}{4 a_{0}}, \qquad
\mathbb{P}\left[S_{i} > d_{0} \right] \le \frac{\epsilon}{4 a_{0}}.
\end{equation}
Thus, by (\ref{app4}), (\ref{app5}), (\ref{app6}) and (\ref{app7}), we have
\begin{equation}
P_{n}(\bar{H}(a,b,c,d)) \le \epsilon \qquad \Rightarrow \qquad P_{n}(H(a,b,c,d)) \ge 1 - \epsilon.
\end{equation}

Next, we show that there exists $n_{1}>0$ and $k>0$ such that for all points $x \in H(a_{0},b_{0},c_{0},d_{0})$ and measurable sets 
$A \in H(a_{0},b_{0},c_{0},d_{0})$, the following inequality holds
\begin{equation}
P_{n_{1}}(x,A) \ge k R(A).
\end{equation}
Let $F_{V}(v)$ denote a cumulative distribution function of a random duration $V$, i.e. $\mathbb{P}\left[V \le v \right]$.  Next, for any $n_{1}$,
\begin{equation}
P_{n_{1}}(x,A) \ge P_{1}(x,\omega_{0})P_{n_{2}}(\omega_{0},A),
\end{equation}
where $n_{2} = n_{1}-1$. Let $x=(m,l_{1},\ldots,l_{m},d_{1},\ldots,d_{m},s_{1},\ldots,s_{m}) \in H(a_{0},b_{0},c_{0},d_{0})$. Then,
\begin{eqnarray}
P_{1}(x,\omega_{0}) 
&\ge& \mathbb{P}\left[ \tau_{1} - \tau_{0} \ge \Delta, \text{all } m \text{ requests depart in } (\tau_{0},\tau_{1})\right] \\
&\ge& \mathbb{P}\left[ \tau_{1} - \tau_{0} > c_{0}+d_{0}\right] = 1-F_{a}(c_{0}+d_{0}) = e^{-\lambda(c_{0}+d_{0})}, \nonumber
\end{eqnarray}
where $F_{a}(u)$ represents cumulative inter-arrival distribution of a renewal process $\{\tau_{n}\}$, i.e. 
$$F_{a}(u)
=\mathbb{P}\left[\tau_{1}-\tau_{0} \le u \right]$$. 
Next, we derive a lower bound for $P_{n_{2}}(\omega_{0},A)$ for some $n_{2}$ large enough such that 
\begin{eqnarray}
\label{app8}
\mathbb{P}\left[ \tau_{n_{2}} - \tau_{0} \ge c_{0}+d_{0} \right] \ge 1-\frac{\epsilon}{2}.
\end{eqnarray}
Note that the condition imposed on $n_{2}$ is possible due to the Weak Law of Large Numbers, since for any $\epsilon > 0$ and all $n_{2}$ large enough with $c_{0}+d_{0}<(1-\epsilon)\mathbb{E}(\tau_{n_{2}}-\tau_{0})$,
\begin{eqnarray}
\mathbb{P}\left[ \tau_{n_{2}} - \tau_{0} \ge c_{0}+d_{0} \right] 
\ge \mathbb{P}\left[ \tau_{n_{2}} - \tau_{0} \ge  (1-\epsilon)\mathbb{E}(\tau_{n_{2}}-\tau_{0}) \right] \ge 1-\frac{\epsilon}{2}.
\end{eqnarray}
Next, pick any $x' =(m',l'_{1}, \ldots l'_{m'},d'_{1}, \ldots d'_{m'}, s'_{1}, \ldots, s'_{m'}) \in A$. Define $x' + dx' \triangleq 
(m',l'_{1}+dl'_{1}, \ldots l'_{m'}+dl'_{m'},d'_{1}+dd'_{1}, \ldots d'_{m'}+dd'_{m'}, s'_{1}+ds'_{1}, \ldots, s'_{m'}+ds'_{m'})$ where $dl'_{1}, \ldots, dl'_{m'},dd'_{1}, \ldots, dd'_{m'}, ds'_{1}, \ldots, ds'_{m'}$ are infinitesimal elements. Then the transition probability into state $(x',x'+dx')$ starting from $\omega_{0}$ can be lower bounded by the probability of the event that there are exactly $m'$ arrivals between $\tau_{1}$ and $\tau_{n_{2}}$ whose arrivals times are determined by $(\tau_{n_{2}} - l'_{i} -dl'_{i},\tau_{n_{2}} - l'_{i})$ for $i=1,\ldots,m'$, and none of these $m'$ arrivals concluded at time $\tau_{n_{2}}$ and there were no other arrivals. Therefore,
\begin{eqnarray}
\mathbb{P}_{n_{2}}(\omega_{0}, (x'+dx' )) \ge e^{-\lambda \tau_{n_{2}}} 
\frac{\lambda^{m'}}{m'!} \prod_{i=1}^{m'}\left[1-F_{D+S,D,S}(l_{i},d_{i},s_{i})\right],
\end{eqnarray}
where $F_{D+S,D,S}(\cdot)$ is the joint cumulative probability mass function. Now define probability distribution
\begin{eqnarray}
R(A) \triangleq \nu \int_{x' \in A} \frac{\lambda^{m'}}{m'!} \prod_{i=1}^{m'}\left[1-F_{D+S,D,S}(l_{i},d_{i},s_{i})\right],
\end{eqnarray}
where $\nu$ is a normalization constant. Thus, we have 
\begin{eqnarray}
P_{n_{2}+1}(x,A) \ge  e^{-\lambda(c_{0}+d_{0}+\tau_{n_{2}})} \nu^{-1} R(A).
\end{eqnarray}
Finally, it is left to show that there exists $K>0$ such that for every initial distribution $P_{0}$, for all $n$ large and for any measurable set $A\subset S(a_{0},b_{0},c_{0},d_{0})$,
\begin{eqnarray}
P_{n}(A) \le  K R(A) + \epsilon.
\end{eqnarray}
By (\ref{app8}), for all $n \ge n_{2}$,
\begin{eqnarray}
P_{n}(A) &\le& \mathbb{P} \left[ H_{n} \in A, \tau_{n}-\tau_{0} > c_{0} + d_{0} \right]
+ \mathbb{P} \left[ \tau_{n}-\tau_{0} \le c_{0} + d_{0} \right] \\
&\le& \mathbb{P}\left[ H_{n} \in A, \tau_{n}-\tau_{0} > c_{0} + d_{0} \right] + \frac{\epsilon}{2} \nonumber \\
&\le&   \int_{x' \in A} \left\{ \frac{\lambda^{m'}}{m'!} \prod_{i=1}^{m'}\left[1-F_{D+S,D,S}(l_{i},d_{i},s_{i})\right] 
\right\} + \epsilon \nonumber \\
&\le&   \nu^{-1}R(A) + \epsilon. \nonumber 
\end{eqnarray}
 
We have verified the conditions stated in Theorem \ref{appthm} and thus the process $\{H_{n}\}$ has a unique stationary distribution as well implying the existence of the stationary blocking probability. 
\end{proof}

\subsection*{Proofs of Technical Lemmas and Propositions}

\begin{proof}[Proof of Lemma \ref{firstl}.]
If $r \le 0$, we focus on the interval $(\left\lceil r\right\rceil-1, \left\lceil r\right\rceil]$ and its preceding interval $(\left\lceil r\right\rceil-2, \left\lceil r\right\rceil - 1]$. The arrival process in $(\left\lceil r\right\rceil-2, \left\lceil r\right\rceil - 1]$ follows a Poisson process with rate $\lambda$. Each arrival has $\gamma$ probability of starting services immediately in $(\left\lceil r\right\rceil-2, \left\lceil r\right\rceil - 1]$, and $1-\gamma$ probability of starting services in $1$ unit of time in $(\left\lceil r\right\rceil-1, \left\lceil r\right\rceil]$. By the Poisson splitting argument, the pre-arrivals in $(\left\lceil r\right\rceil-1, \left\lceil r\right\rceil]$ follow a Poisson process with rate $(1-\gamma)\lambda$. Using a similar argument, we conclude that the pre-arrival process in $(\left\lceil r\right\rceil-1, \left\lceil r\right\rceil]$ induced by customers arriving to the system in $(\left\lceil r\right\rceil-1, \left\lceil r\right\rceil]$ follows a Poisson process with rate $\gamma\lambda$. Note that these two processes are independent of each other since they are generated by customers arriving in disjoint intervals. Now merge these two pre-arrival processes, and the resulting pre-arrival process in $(\left\lceil r\right\rceil-1, \left\lceil r\right\rceil]$ follows a Poisson process with rate $(1-\gamma) \lambda + \gamma\lambda =\lambda$. 

If $0< r \le 1$, focus on the interval $(0,1]$ and its preceding interval $(-1,0]$. By an argument similar to the above, there is a Poisson process of pre-arrivals with rate $(1-\gamma)\lambda$ induced by customers arriving in $(-1,0]$. There is also a Poisson process with rate $\gamma \lambda$ induced by customers arriving in $(0,1]$. However, the latter process consists of post-arrivals. Thus, the resulting pre-arrivals at time $0$ over $(0, 1]$ follow a Poisson process with rate $(1-\gamma) \lambda$.

Finally, since the maximum reservation time is $1$, it is impossible for customers arriving prior to $0$ to start service at any time greater than $1$. Thus, the rate of pre-arrivals from $1$ onwards is $0$. This completes the proof. 
\end{proof}

\begin{proof}[Proof of Lemma \ref{sl1}.]
Suppose that a customer arrives at time $0$ in steady-state and requests the service to commence immediately ($D=0$), i.e., requesting the service interval $(0,1]$. Focus solely on the pre-arrivals as seen from $0$. By Lemma \ref{sst}, the pre-arrivals over the time interval $(-1,0]$ follow a Poisson process with rate $\lambda$, denoted by $N_{0}$. However, this implies that, over the time interval $(0,1]$, the customers depart the system following a Poisson process with rate $\lambda$ (a shift of $N_{0}$ by $1$ unit of time). Let $\tilde{N}_{0}$ be the mirror image of the departure process induced by $N_{0}$ over $(0,1]$. By Lemma \ref{sst}, we also know that the pre-arrivals over $(0,1]$ follow a Poisson process with rate $(1-\gamma)\lambda$. We denote this pre-arrival process by $N_{1}$. Figure \ref{adv3} shows the pre-arrival and departure processes.

Consider now the number of customers in the system at some time $r$. These fall exactly into one of the two types; customers that started service over $(0,r]$ and customers that started service over $(r-1,0]$ and will depart over $(r,1]$. It follows that the number of customers in service at time $r \in (0,1]$ can be expressed as $\tilde{N}_{0}(1-r) + N_{1}(r)$. Specifically, in time $r$ the number of departures over $(r,1]$ (equal to $\tilde{N}_{0}(1-r)$) captures customers starting service before $0$, and still in the system at time $r$. In addition, the number of pre-arrivals over $(0,r]$ (equal to $N_{1}(r)$) captures customers arriving before $0$, starting service over $(0,r]$ and still being served in time $r$. The sum of the two is exactly equal to the total number of customers in the system at time $r$. Note that by the Poisson splitting argument, it follows that $\tilde{N}_{0}$ and $N_{1}$ are independent of each other. The virtual blocking probability is expressed in terms of the maximum of the sum of these two Poisson counting processes running towards each other (see Figure \ref{adv2}), i.e.,
$$
P_{0} \triangleq \mathbb{P}(B \mid D=0) =  \mathbb{P} \left(\max_{r \in [0,1]}
\left\{ \tilde{N}_{0}(1-r; \lambda) + N_{1}(r;(1-\gamma)\lambda) \right\} \ge C \right).
$$

Consider now the case that the arriving customer requests the service to commence in $D=1$ unit of time, i.e., the service will cover the interval $(1,2]$. The departure process in $(1,2]$ is a shift of the pre-arrival process $N_{1}$ in $(0,1]$ by $1$ unit of time, and its mirror image is denoted by $\tilde{N}_{1}$. Moreover, by Lemma \ref{sst}, the pre-arrival process $N_{2}$ in $(1,2]$ has rate $0$. Thus, we have
\begin{eqnarray*}
P_{1} \triangleq \mathbb{P}(B \mid D=1) = \mathbb{P} \left( \max_{r \in [0,1]} 
\left\{ \tilde{N}_{1}(1-r; (1-\gamma)\lambda) + N_{2}(r;0) \right\} \ge C \right)=\mathbb{P} \left(  
 \tilde{N}_{1}(1; (1-\gamma)\lambda) \ge C \right).
\end{eqnarray*}
This completes the proof. 
\end{proof}

\begin{proof}[Proof of Lemma \ref{decomposition}.]
Note again that for each $r \in [0,1]$, $\tilde{N}_{0}(1-r; \theta_{1}\lambda) + N_{1}(r;\lambda)$ is equal to the sum of the number of occurrences of $N_{0}$ over $(1-r,1]$ and the number of occurrences of $N_{1}$ over $[0,r)$. Also observe that the value of $X$ is obtained either at time $0$ or upon on occurrence of $N_{1}$. Now condition on $\mathcal{R}_{n} = \omega_{n}$ (a specific realization of the random walk $\mathcal{R}_{n}$), and consider the $l^{th}$ occurrence of $N_{1}$ ($l \in \{0,\ldots,n\}$), at time, say $r$. Then we have (see Figure \ref{adv2}),
\begin{eqnarray*}
&&\tilde{N}_{0}(1-r; \lambda)+N_{1}(r; \theta_{1}\lambda) 
= (\text{\# up-steps before and including } l + \text{\# down-steps after }l )\\
&=& (\text{\# up-steps before and including }l - \text{\# down-steps before and including }l)  \\
&& + (\text{\# down-steps before and including }l + \text{\# down-steps after }l) .
\end{eqnarray*}
The first term is exactly the location of the random walk after $l$ steps and the second expression is exactly $G_{n}$. Since $X$ is the maximum of the above sum over all arrivals $l=0,1,\ldots,n$, it follows that indeed $X_{n} \mid (\mathcal{R}_{n} = \omega_{n}) = (G_{n} + M_{n}) \mid (\mathcal{R}_{n} = \omega_{n})$, from which the result follows. 
\end{proof}

\begin{proof}[Proof of Lemma \ref{hitting}.]
Define the stopping time $\tau$ as follows, $\tau = \inf \left\{ t\ge 1: S_{t} \le -a \textrm{ or } S_{t}\ge b
\right\}.$ It is straightforward to check the following two conditions, 
\begin{eqnarray}
\mathbb{E}(\tau) \le \infty, \qquad\mathbb{E}(|E_{t+1} - E_{t}| \mid \mathcal{F}_{t}) \le 2, \qquad \forall t \in \tau.
\end{eqnarray}
The Wald's identity (see \cite{gallager})
\begin{eqnarray}
G_{n}(\theta) \triangleq \frac{e^{\theta S_{n}}}{[\phi(\theta)]^{n}}
\end{eqnarray}
is a martingale where the moment generating function $\phi(\theta) \triangleq 
\mathbb{E}(e^{\theta Y}) \ge 1$. First we compute $\hat{\theta}$ that solves the equation
$\mathbb{E}(e^{\hat{\theta} Y}) = 1$, i.e.,
\begin{eqnarray}
\mathbb{E}(e^{\hat{\theta} Y}) = pe^{\hat{\theta}} + qe^{\hat{\theta}} = 1
\qquad \Rightarrow \qquad e^{\hat{\theta}} = \frac{q}{p}.
\end{eqnarray}
By Optional Sampling Theorem (see \cite{gallager}),
\begin{eqnarray}
\mathbb{E}\left[  \frac{e^{\hat{\theta} S_{\tau}}}{[\phi(\hat{\theta})]^{\tau}}
\right] = \mathbb{E}\left[  e^{\hat{\theta} S_{\tau}}\right] = \mathbb{E}\left[  e^{\hat{\theta} S_{0}}\right] = 1.
\end{eqnarray} 
This leads to 
\begin{eqnarray}
\mathbb{P}(S_{\tau} \ge b) \underbrace{\mathbb{E}(e^{\hat{\theta} S_{\tau}} \mid S_{\tau} \ge b)}_{E_{b}} + (1-\mathbb{P}(S_{\tau} \ge b)) \underbrace{\mathbb{E}(e^{\hat{\theta} S_{\tau}} \mid S_{\tau} \le -a) }_{E_{a}} = 1.
\end{eqnarray}
Thus, we have
\begin{eqnarray}
\mathbb{P}(S_{\tau} \ge b) = \frac{1-E_{a}}{E_{b} - E_{a}}
=\frac{1-e^{-\hat{\theta}a}}{e^{\hat{\theta}b} - e^{-\hat{\theta}a}}= \frac{1- \left(\frac{q}{p}\right)^{-a}}{\left(\frac{q}{p}\right)^{b} - \left(\frac{q}{p}\right)^{-a}}.
\end{eqnarray}
Let $S_{\tau}^{a} \triangleq S_{\tau}$ be the stopping time location of the process. Let $B_{a}$ be the event that the random walk hits $b$ before $-a$. Observe that $\mathbb{P}(B_{a}) = \mathbb{P}(S_{\tau}^{a} \ge b)$ and also note that $B_{i} \subset B_{i+1}$ for all $i$. Define $B = \bigcup_{i=1}^{\infty} B_{i}$, i.e., there exists an $i$ that the random walk hits $b$ before $-i$. Therefore $\mathbb{P}(M_{\infty} \ge b) = \mathbb{P}(B)$. By properties of probability measures, we have 
\begin{eqnarray}
\mathbb{P}(M_{\infty} \ge b) &=& \mathbb{P}(\bigcup_{i=1}^{\infty} B_{i}) 
= \lim_{a \rightarrow \infty} \mathbb{P}(B_{a}) =\lim_{a \rightarrow \infty} \left(\frac{1- \left(\frac{q}{p}\right)^{-a}}{\left(\frac{q}{p}\right)^{b} - \left(\frac{q}{p}\right)^{-a}}\right) = \left( \frac{p}{q}\right)^{b}.
\end{eqnarray} 
This completes the proof. 
\end{proof}

\begin{proof}[Proof of Lemma \ref{ext1}.]
Lemma \ref{sstg} is a generalized version of Lemma \ref{sst}. For $r \le 0$, consider the time interval 
$(\left\lceil r \right \rceil  -1, \left\lceil r \right \rceil]$. By arguments similar to those used in Lemma \ref{sst}, for each $l\in [0,u]$, the interval $(\left\lceil r \right \rceil -1 -l, \left\lceil r \right \rceil-l]$ generates a stream of pre-arrivals over $(\left\lceil r \right \rceil  -1, \left\lceil r \right \rceil]$ that follow a Poisson process of rate $\gamma_{l}\lambda$. These processes are independent of each other and the overall merged process has rate $\lambda = \gamma_{0} \lambda + \gamma_{1} \lambda + \ldots + \gamma_{u} \lambda$. For $\left\lceil r \right \rceil = d$ for $d \in [1,u]$, then the pre-arrivals prior to $t$ over $(\left\lceil r \right \rceil-1, \left\lceil r \right \rceil]$ are induced by arriving customers over the intervals 
$(\left\lceil r \right \rceil  -l - 1, \left\lceil r \right \rceil - l]$, for $l \in [d,u]$, and the total rate is 
$\gamma_{d} \lambda + \gamma_{d+1} \lambda + \ldots + \gamma_{u} \lambda$. Note again that the rate $\gamma_{i} \lambda$ is induced from the Poisson arrival stream of customers over $(\left\lceil r \right \rceil - l - 1, \left\lceil r \right \rceil - l]$ who wish to start in $l$ units of time. Since we only consider pre-arrivals prior to $t$, the terms $\gamma_{d-1}\lambda, \gamma_{d-2}\lambda, \ldots, \gamma_{0}\lambda$ are missing. 
\end{proof}

\begin{proof}[Proof of Lemma \ref{gl1}.]
By Lemma \ref{ext1}, for each $d \in [0,u]$, the pre-arrival process $N_{d}$ over the interval $(d-1,d]$ follows a Poisson process with rate $\lambda_{d}=\lambda\left(1- \sum_{i=0}^{d-1}\gamma_{i}\right)$.
This implies that over the interval $(d,d+1]$, the customers depart the system following a Poisson process with rate $\lambda_{d}$ (a shift of $N_{d}$ by 1 unit of time). Let $\tilde{N}_{d}$ be the mirror image of the departure process induced by $N_{d}$ over 
$(d,d+1]$, and therefore $\tilde{N}_{d}$ has the same rate $\lambda_{d}$. The rest of arguments is identical to that of Lemma \ref{sl1}. 
\end{proof}

\begin{proof}[Proof of Proposition \ref{gl2}.]
First we assume that $\gamma_{0}>0$. By Lemma \ref{gl1}, we have that $\lambda_{0} > \lambda_{1}$ and $\lambda_{d} \ge \lambda_{d+1}$ for each $d \in [1,u]$. By Proposition \ref{primary}, it follows that,
\begin{eqnarray*}
\lim_{\lambda \rightarrow \infty} P_{0} = \frac{1}{2}; \qquad \lim_{\lambda \rightarrow \infty} P_{d} = 0, \qquad \textrm{for } d \in [1,u].
\end{eqnarray*}
Therefore $P_{i} \le \frac{1}{2}$ holds given that $\gamma_{0}>0$. In fact, we can relax the assumption of $\gamma_{0}>0$. If $\gamma_{0}=0$, it implies that over the interval $(0,1]$ (recall that the customer arrives at time $0$ in steady-state), the departure rate is equal to the pre-arrival rate, i.e., $\lambda_{0} = \lambda_{1} =\lambda$. Proposition \ref{primary} cannot be applied under this case. However, the fact that $\gamma_{0}=0$ implies that no arriving customers will start the service right away. Therefore, we do not have to consider the probability $P_{0}$ in the expression of $P$. Let the index $i = \min\{d: \gamma_{d} >0\}$. Then we have  $\gamma_{d}=0$ for 
$d \in [0,i-1]$, by the same argument, we can ignore the probabilities $P_{0}, \ldots, P_{i-1}$. Instead, again by Proposition \ref{primary}, we have  
\begin{eqnarray*}
\lim_{\lambda \rightarrow \infty} P_{i} = \frac{1}{2}; \qquad
\lim_{\lambda \rightarrow \infty} P_{d} = 0, \qquad  d \in [i+1,u].
\end{eqnarray*}
 This completes the proof. 
\end{proof}

\begin{proof}[Proof of Lemma \ref{rs}.]
For each set $s \in [1,v]$, and $d \in [0,u]$, the pre-arrivals prior to $t$ (i.e., $N_{d}^{s}$) over $(d-s,d-s+1]$ are induced by arriving customers over the intervals $(d-s-i,d-s-i+1]$ for $i= \max(0,d-s+1),\ldots,u$, and the total rate $\lambda_{d}^{s}$ is therefore
\begin{equation}
\gamma_{\max (0,d-s+1)}^{s}\lambda_{0}^{s}  + \ldots + \gamma_{u}^{s}\lambda_{0}^{s} = \lambda_{0}^{s} \left(1 - \sum_{i=0}^{d-s} \gamma_{i}^{s}\right) = \kappa_{s} \lambda \left(1 - \sum_{i=0}^{d-s} \gamma_{i}^{s}\right).
\end{equation}
Note again that the rate $\gamma_{i}^{s}$ is induced from the Poisson arrival stream of customers over $(d-s-i,d-s-i+1]$ who wish to start in $i$ units of time. It follows from the Poisson splitting argument that $N_{d}^{s}$ and $N_{d'}^{s'}$ are independent if $(d,s) \ne (d',s')$. This completes the proof. 
\end{proof}

\begin{proof}[Proof of Lemma \ref{ac}.]
The assumption $\gamma_{0}^{s} > 0$ for some $s\in [1,v]$ implies that in the interval $(0,1]$, the total departure rate is strictly greater than the total pre-arrival rate, i.e., $\sum_{s=1}^{v} \lambda_{0}^{s} > \sum_{s=1}^{v}\lambda_{s}^{s}$. For subsequent intervals $(d,d+1]$ for $d \ge 1$, we have $\sum_{s=1}^{v} \lambda_{d}^{s} \ge \sum_{s=1}^{v}\lambda_{d+s}^{s}$. Therefore the conditions of Proposition \ref{primary} are satisfied. Proposition \ref{primary} implies that 
\begin{eqnarray}
\label{fin}
&& \lim_{\lambda \rightarrow \infty} \mathbb{P}(A_{0}\ge C ) 
= \lim_{\lambda \rightarrow \infty} \mathbb{P}\left(\left(\sum_{s=2}^{v} \sum_{i=1}^{s-1} N_{i}^{s}(1;\lambda_{i}^{s}) + 
\sum_{s=1}^{v} \tilde{N}_{0}^{s}(1;\lambda_{1}^{s})
\right)\ge C\right)  \\
&=& \lim_{\lambda \rightarrow \infty} \mathbb{P}\left(\textrm{Poisson}\left(\sum_{s=2}^{v} \sum_{i=1}^{s-1} \lambda_{i}^{s} + 
\sum_{s=1}^{v} \lambda_{0}^{s} \right)\ge C \right) 
= \lim_{\lambda \rightarrow \infty} \mathbb{P}\left(\textrm{Poisson}\left(\sum_{s=1}^{v}  s \lambda^{s}_{0}\right)\ge C\right) \nonumber \\
&=& = \lim_{\lambda \rightarrow \infty} \mathbb{P}\left(\textrm{Poisson}\left(\sum_{s=1}^{v}  s \kappa_{s} \lambda \right)\ge C\right) 
= \lim_{\lambda \rightarrow \infty} \mathbb{P}(\textrm{Poisson}(\rho)\ge C) 
= \frac{1}{2}. \nonumber
\end{eqnarray}
The third equality of (\ref{fin}) follows from (\ref{rates}) in Lemma \ref{rs}. Similarly, we can show that $\lim_{\lambda \rightarrow \infty} \mathbb{P}(A_{d} \ge C ) = 0$ for $d \in [1,u]$. This completes the proof.  
\end{proof}

\begin{proof}[Proof of Proposition \ref{sec}.]
First we assume that $\gamma_{0}^{s} > 0$ for some $s \in [1,v]$. For each $s \in [1,v]$, by applying union bound and Lemma \ref{ac}, we have
\begin{eqnarray}
&&\lim_{\lambda \rightarrow \infty} P_{0}^{j} = \lim_{\lambda \rightarrow \infty} \mathbb{P}(\max (A_{0}, \ldots, A_{d-1})\ge C) 
\le \lim_{\lambda \rightarrow \infty} \mathbb{P}\left(\bigcup_{i=0}^{d-1} A_{i} \ge C \right) \\
&\le& \lim_{\lambda \rightarrow \infty} \sum_{i=0}^{d-1}\mathbb{P}(A_{i}\ge C) = \lim_{\lambda \rightarrow \infty} \mathbb{P}(A_{0}\ge C) 
= \frac{1}{2}. \nonumber
\end{eqnarray}
On the other hand, it is obvious that, for each $s \in [1,v]$, 
\begin{eqnarray}
\lim_{\lambda \rightarrow \infty} P_{0}^{s} = \lim_{\lambda \rightarrow \infty} \mathbb{P}(\max (A_{0}, \ldots, A_{s-1})\ge C) 
\ge \lim_{\lambda \rightarrow \infty} \mathbb{P}(A_{0}\ge C)  
= \frac{1}{2}, \nonumber
\end{eqnarray}
Similarly, $\lim_{\lambda \rightarrow \infty} P_{d}^{s}= 0 $ for each $d \in [1, u]$ and $s \in [1,v]$.  

We then drop the assumption that $\gamma_{0}^{s} > 0$ for some $s \in [1,v]$. Suppose now $\gamma_{0}^{s} = 0$ for all $s \in [1,v]$. This implies that no arriving customers at time $0$ will start the service over $(0,1]$, and hence we can ignore the blocking probability over this interval. Let the index
$$
i = \min \left\{d: \gamma_{d}^{s} > 0 \text{ for some } s \in [1,v] \right\}.
$$
Observe that no arriving customers at time $0$ will start the service over $(0,i]$. For the subsequent interval $(i,i+1]$, the total departure rate is strictly greater than the pre-arrival rate, i.e., $\sum_{s=1}^{v} \lambda_{i}^{s} > \sum_{s=1}^{v}\lambda_{i+s}^{j}$. Thus, it suffices to show that for each $s \in [1,v]$,
$\lim_{\lambda \rightarrow \infty} P^{s}_{i} = \lim_{\lambda \rightarrow \infty} \mathbb{P}(\max (A_{i}, \ldots, A_{i+s-1})\ge C) 
= 1/2$, and $\lim_{\lambda \rightarrow \infty} P^{s}_{d} = 0$ for each $d \in [i+1,u]$ and $s \in [1,v]$. The same arguments follow through.
\end{proof}

\begin{proof}[Proof of Lemma \ref{ub}.] Consider the following LP
\begin{eqnarray}
\label{olp}
\max_{\alpha_{dsk}^{\pi}} 
\sum_{k=1}^{M} \sum_{d,s} r_{k} \alpha_{dsk}^{\pi} \lambda_{dsk} s, \qquad
\textrm{s.t.} && \sum_{k=1}^{M} \sum_{d,s} \alpha_{dsk}^{\pi} \lambda_{dsk} s \le C,  \text{ } 0 \le \alpha_{dsk} \le 1, \text{ } \forall d,s,k.
\end{eqnarray}
Note the LP defined in (\ref{mlp}) differs from the LP defined in (\ref{olp}) by changing the right hand side of the capacity constraint to $(1-\epsilon)C$. Suppose the optimal solution of the LP defined in (\ref{olp}) is $\{\hat{\alpha}_{dsk}\}$. Now consider 
$\{\tilde{\alpha}_{dsk}\} = \{(1-\epsilon) \hat{\alpha}_{dsk}\}$. Since 
\begin{equation}
\sum_{k=1}^{M} \sum_{d,s} \tilde{\alpha}_{dsk} \lambda_{dsk} s = (1-\epsilon) \sum_{k=1}^{M} \sum_{d,s} \hat{\alpha}_{dsk} \lambda_{dsk} s \le (1-\epsilon)C,
\end{equation}
$\{\tilde{\alpha}_{dsk}\}$ is a feasible solution to (\ref{mlp}).  Then we have
\begin{equation}
\sum_{k=1}^{M} \sum_{d,s} r_{k} \alpha^{*}_{dsk} \lambda_{dsk} s \ge  \sum_{k=1}^{M} \sum_{d,s} r_{k} {\tilde{\alpha}}_{dsk} \lambda_{dsk} s  = (1-\epsilon) \sum_{k=1}^{M} \sum_{d,s} {\hat{\alpha}}_{dsk} \lambda_{dsk} s \ge (1-\epsilon)\mathcal{R}(OPT).
\end{equation}
The last inequality holds since the optimal objective value in (\ref{olp}) provides an upper bound on the optimal expected revenue rate, since the capacity constraint is enforced on expectation whereas in the original problem this capacity constraint has to hold, for each sample path. This completes the proof. 
\end{proof}

\end{document}